\pgfplotsset{compat=newest}
\numberwithin{equation}{section}
\newtheorem{thm}{Theorem}[section]
\newtheorem{prp}[thm]{Proposition}
\newtheorem{lem}[thm]{Lemma}
\theoremstyle{definition}
\newtheorem{defn}[thm]{Definition}
\theoremstyle{remark}
\newtheorem{rem}[thm]{Remark}
\numberwithin{equation}{section}
\def\eg{\textit{e.g.}}
\def\ie{\textit{i.e.}}
\def\io{\"{o}}
\def\Or{{\mathcal O}}
\def\R{\mathbb{R}}
\def\Rp{\mathbb{R}_{+}}
\def\Rt{\mathbb{R}^{3}}
\def\S{\mathbb{S}^{2}}
\def\Sh{\mathbf{S}_{h}}
\def\Oh{\Omega_{h}}
\def\Oijk{\Omega_{ijk}}
\def\d{\mathrm{d}}
\def\hi{h_{i}}
\def\x{\mathrm{x}}
\def\y{\mathrm{y}}
\def\z{\mathrm{z}}
\def\m{\mathrm{m}}
\def\q{\mathrm{q}}
\def\xij{\x_{ij}}
\def\mij{\m_{ij}}
\def\xn{\x_{n}}
\def\xp{\x^{\ast}}
\def\yp{\y^{\ast}}
\def\xpij{\xp_{ij}}
\def\xpjk{\xp_{jk}}
\def\xpik{\xp_{ik}}
\def\xip{\x_{i}}
\def\xjp{\x_{j}}
\def\xkp{\x_{k}}
\def\xci{\xip^{c}}
\def\SN{\mathrm{S}_{N}}
\def\a{m_{a}}
\def\l{m_{l}}
\def\Vi{\widetilde{\mathrm{V}}_{i}}
\def\Vj{\widetilde{\mathrm{V}}_{j}}
\def\Vn{\widetilde{\mathrm{V}}_{n}}
\def\Tgi{{\mathrm T}_{ijk}}
\def\Tgsi{\widetilde{\mathrm T}_{ijk}}
\def\Qi{\widetilde{\mathrm Q}_{i}}
\def\Qj{\widetilde{\mathrm Q}_{j}}
\def\Qk{\widetilde{\mathrm Q}_{k}}
\def\Qn{\widetilde{\mathrm Q}_{n}}
\def\Sn{\mathrm{S}_{n}}
\def\Si{\mathrm{S}_{i}}
\def\Sj{\mathrm{S}_{j}}
\def\Sk{\mathrm{S}_{k}}
\def\Qip{\mathrm{Q}_{i}}
\def\Qjp{\mathrm{Q}_{j}}
\def\Qkp{\mathrm{Q}_{k}}
\def\Qnp{\mathrm{Q}_{n}}
\def\Prj{{\mathcal P}}
\def\Prji{\Prj^{-1}}
\def\Th{\mathcal{T}_{h}}
\def\Ths{\widetilde{\Th}}
\def\VGs{\widetilde{\mathcal{V}}_{h}}
\def\LiN{\Lambda(i)} 
\def\gij{\widetilde{\Gamma}_{ij}}
\def\tij{\widetilde{\tau}_{ij}}
\def\cl{\mathrm{cl}}
\def\al{\alpha}
\def\pr{\partial}
\def\pri{\partial_{i}}
\def\prj{\partial_{j}}
\def\prsi{\partial_{s,i}}
\def\grad{\nabla}
\def\grads{\nabla_{s}}
\def\laps{\Delta_{s}}
\def\lapd{\mathrm{L}_{s,h}}
\def\nsx{\vec{\mathrm{n}}_{\S,\x}}
\def\nxgi{\vec{\mathrm{n}}_{\x,\gij}}
\def\nxvi{\vec{\mathrm{n}}_{\x,\Vi}}
\def\nxvn{\vec{\mathrm{n}}_{\x,\Vn}}
\def\nxti{\vec{\mathrm{n}}_{\x,\Tgsi}}
\def\nyti{\vec{\mathrm{n}}_{\y,\Tgsi}}
\def\nxqn{\vec{\mathrm{n}}_{\x,\Qn}}
\def\TxS{\mathbb{T}_{\x,\S}}
\def\TyS{\mathbb{T}_{\y,\S}}
\def\Ct{C^{2}(\S)}
\def\Lt{L^{2}(\S)}
\def\Li{L^{\infty}(\S)}
\def\Lo{L^{1}(\S)}
\def\Lp{L^{p}(\S)}
\def\LpT{L^{p}(\Tgsi)}
\def\Lq{L^{q}(\S)}
\def\LqT{L^{q}(\Tgsi)}
\def\Ho{H^{1}(\S)}
\def\Ht{H^{2}(\S)}
\def\WtpT{W^{2,p}(\Tgsi)}
\def\HqS{H_{0}^{1}(\S)}
\def\HtHq{H^{2}(\S)\cap\HqS}
\def\Wkp{W^{k,p}(\S)}
\def\Wti{W^{2,\infty}(\S)}
\def\WtiHq{W^{2,\infty}(\S)\cap\HqS}
\def\Wtp{W^{2,p}(\S)}
\def\WmpT{W^{m,p}(\Tgsi)}
\def\WoqT{W^{1,q}(\Tgsi)}
\def\WtpHq{\Wtp\cap\HqS}
\def\Po{{\mathbb P}_{1}(\Sh,\Th)}
\def\Pos{{\mathbb P}_{1}(\S,\Ths)}
\def\U{\widetilde{U}_{h}}
\def\Vhs{\widetilde{V}_{h}}
\def\A{\mathcal{A}}
\def\Fcij{\widetilde{\mathcal{F}}_{ij}}
\def\Fdij{\overline{\mathcal{F}}_{ij}}
\def\Fdji{\overline{\mathcal{F}}_{ji}}
\def\Ahs{\widetilde{\A}_{h}}
\def\Ahd{\overline{\A}_{h}}
\def\uh{u_{h}}
\def\vh{v_{h}}
\def\wh{w_{h}}
\def\vah{\varphi_{h}}
\def\Eu{u^{\Omega}}
\def\Eup{\overline{u}^{\Omega}}
\def\uhp{\overline{u}_{h}}
\def\Euhp{\overline{u}^{\Omega}_{h}}
\def\Evh{\vh^{\Omega}}
\def\IU{\widetilde{\Pi}_{h}}
\def\IV{\widetilde{\Pi}^{\ast}_{h}}
\def\I{\widetilde{\mathrm{I}}_{h}}
\def\eh{\varepsilon_{h}}
\def\PV{\mathrm{P}_{h}}
\def\Gy{G^{\y}}
\def\Gyx{\Gy(\x)}
\def\Gxz{G^{\x}(\z)}
\def\dy{\delta^{\y}}
\def\dyx{\dy(\x)}
\def\dyz{\dy(\z)}
\def\Edy{\boldsymbol{\delta}^{\y,\Omega}}
\def\Edyp{\delta^{\yp,\Omega}}
\def\Gyd{g^{\y}}
\def\Gydo{\Gyd_{0}}
\def\Gyda{\Gyd_{1}}
\def\Gydn{\Gyd_{n}}
\def\Gydoh{\Gyd_{0,h}}
\def\Gydah{\Gyd_{1,h}}
\def\Gydnh{\Gyd_{n,h}}
\def\Gydx{\Gyd(\x)}
\def\xaf{x^{\ast}_{1}}
\def\xas{x^{\ast}_{2}}
\def\xat{x^{\ast}_{3}}
\def\bxat{\overline{x}^{\ast}_{3}}
\def\br{\vec{\boldsymbol{r}}}
\def\bth{\vec{\boldsymbol{\theta}}}
\def\bva{\vec{\boldsymbol{\phi}}}
\def\prt{\pr^{2}}
\providecommand{\keywords}[1]{\small\textbf{Keywords:} #1}
\title{\bf On pointwise error estimates for Voronoï-based finite volume methods for the Poisson equation on the sphere}
\author{Leonardo A. Poveda\thanks{Department of Mathematics, The Chinese University of Hong Kong, Shatin, Hong Kong SAR. Corresponding author (lpoveda@math.cuhk.edu.hk)} \and Pedro Peixoto\thanks{Instituto de Matemática e Estatística, Universidade de São Paulo, Brazil}}
\begin{document}
\maketitle
\begin{abstract}
In this paper, we give pointwise estimates of a Voronoï-based finite volume approximation of the Laplace-Beltrami operator on Voronoï-Delaunay decompositions of the sphere. These estimates are the basis for a local error analysis, in the maximum norm, of the approximate solution of the Poisson equation and its gradient. Here, we consider the Voronoï-based finite volume method as a perturbation of the finite element method. Finally, using regularized Green's functions, we derive quasi-optimal convergence order in the maximum-norm with minimal regularity requirements. Numerical examples show that the convergence is at least as good as predicted.
\end{abstract}

\keywords{Laplace-Beltrami operator, Poisson equation, spherical icosahedral grids,  finite volume method, \emph{a priori} error estimates, pointwise estimates, uniform error estimates}

\section{Introduction}
\label{sec:introduction}

The study of numerical solutions of Partial Differential Equations (PDEs) posed on spheres and other surfaces arise naturally in many applications. It is especially important in geophysical fluid dynamics, where a sphere is usually adopted as a domain. For instance, in weather forecasting and climate modeling, PDEs are used and discretized through finite differences, finite elements,  and finite volume methods on a spherical domain \cite{giraldo1997lagrange,stuhne1999new,heikes1995anumerical}.

The efficiency and accuracy of the approximate solutions depend on certain characteristics of the discretization of the sphere and of the differential operators. In the present work, we start by looking at the approximations of solutions of the Poisson problem on the unit sphere using \emph{spherical icosahedral geodesic grids} \cite{sadourny1968integration,williamson1968integration,baumgardner1985icosahedral,heikes1995anumerical}, \ie, grids that come from an icosahedron inscribed on the sphere, with its vertices and faces projected onto the spherical surface. We obtain a spherical triangular grid whose edges are geodesic arcs and satisfy the so-called Delaunay criterion \ie, maximizing the smallest angle \cite{glitzky2010discrete,gartner2019why,hjelle2006triangulations}. Following  \cite{augenbaum1985construction,renka1997algorithm}, the mentioned construction will allow us to define two types of grids on $\S$: a triangular Delaunay (primal) decomposition and a Voronoï (dual) decomposition. In what follows, we will consider such grids in a more general way, not necessarily restricted to those directly built from an icosahedron. Therefore, we will refer to them as general Voronoï-Delaunay decompositions of the sphere throughout this paper.

Voronoï-based finite volume methods are very popular and allow great flexibility. However, the finite volume scheme may not be formally consistent, yet can still lead to second-order convergence results, leading to what is sometimes called supra-convergence \cite{barbeiro2009supraconvergent,despres2004lax,bouche2005error,pascal2007supraconvergence,manteuffel1986numerical,kreiss1986supra,diskin2010notes}. Error estimates of the first order of convergence for approximate solutions of planar Voronoï-based finite volume method in the $H^{1}$ and $L^{2}$-norm have been reported by   \cite{mishev1998finite,gallouet2000error,eymard2001finite,du2003voronoi,eymard2006cell}. Second-order accuracy in the $L^{2}$-norm using planar dual Donald decompositions, which uses the triangle barycenters as vertices of the dual grid, were considered in \cite{jianguo1998finite,li2000generalized,chou2000error,chou2007superconvergence} and for general surfaces in \cite{ju2009finite,ju2009posteriori}. These latter works explicitly use properties of the barycentric dual cells, \ie, the quadratic order is obtained by using the centroid of triangles, and therefore these constructions cannot be generally extended to the arbitrary Voronoï-Delaunay decompositions.

Previous work \cite{li2000generalized,chou2000error,chen2002note,wu2003error} also usually have an extra requirement in the regularity of the exact solution (belong to $H^{3}$ or $W^{3,p}$), which is excessive compared to the requirements in finite element methods \cite{brenner2007mathematical,ciarlet2002finite,rannacher1982some}. However, \cite{ewing2002accuracy} has reported a sufficient condition to decrease the regularity of the exact solution (is in $H^{2}$) but imposes an added regularity requirement on the forcing source term, which is in $H^{1}$ to get the optimal convergence order. The authors also highlighted that, except for one-dimensional domains or the solution domain has a boundary smooth enough, the $H^{1}$-regularity of the source term does not automatically imply the $H^{3}$-regularity of the exact solution.

On the sphere, \cite{du2005finite} show a quadratic order estimate in the Voronoï-Delaunay decomposition on $\S$ in the $L^{2}$-norm and a Spherical Centroidal Voronoï Tesselation (SCVT) optimized grid  \cite{du1999centroidal,du2003constrained} is used as their Voronoï-Delaunay decomposition and the excessive regularity assumption in the exact solution. Based on this, as a first minor result of this work, we show more general error estimates than given in \cite{du2005finite} applying the approach of \cite{ewing2002accuracy}, \ie, decreasing the regularity in the exact solution and imposing a minimum regularity in the source term. Thus we obtain the desired order of convergence. We conclude, as in \cite{du2005finite}, that the proof cannot be extended to Voronoï-Delaunay decompositions in general due to the explicit use of the criteria of the SCVT. In general, determining a quadratic convergence order in the $L^{2}$-norm is an open issue for the Voronoï-based finite volume method. Several efforts have been made to answer this question and are limited to topological aspects; for example, \cite{omnes2011second} gives a partial answer and an important improvement in relation to what is known so far.

The topic of this paper is the convergence analysis for approximate solutions of the Voronoï-based finite volume method in the maximum-norm, extending existing results for the plane \cite{ewing2002accuracy,zhang2014superconvergence,chou2003lp} to the sphere. We do not know that advances in this direction have been previously described in the literature, particularly for Voronoï-Delaunay decompositions on $\S$. We opted to consider the Voronoï-based finite volume method as a perturbation of the finite element method, an approach widely described in the literature \cite{li2000generalized, ewing2002accuracy,lin2013finite}. 

The main idea is to use the standard error estimation procedures developed for the finite elements on surfaces such as \cite{demlow2009higher, kroner2017approximative,kovacs2018maximum} along with the use of the regularized Green's functions on the sphere.

The main result of our work is the proof of the sub-linear convergence order of a classic finite volume method on general spherical Voronoï-Delaunay decompositions, for the Poisson equation, in the maximum norm. This result tightens the gap between theoretical convergence analysis and existing empirical evidence for the convergence of such schemes in the sphere. Empirical evidence indicated the possibility of linear convergence; however, here, our results contain a logarithmic factor caused by the use of linear functions in the primal Delaunay decomposition, which apparently cannot be avoided, as also was initially examined in Euclidean domains 
via finite element methods by  \cite{scott1976optimal,rannacher1982some,schatz1998pointwise} and more recently described in \cite{lyekekhman2017maximum,leykekhman2021weak,li2022maximum}. Additionally, linear convergence order in the maximum norm is proved for SCVT grids.

The outline of the paper is as follows: in Section \ref{sec:problsetting}, we briefly introduce some notation and the model equation used in this work. Section \ref{sec:grids} is devoted to the usual recursive construction of the spherical icosahedral geodesic grids. In Section \ref{sec:fvm}, we establish the classical finite volume method and discrete function spaces. The error estimates for the discrete finite volume scheme are given in Section \ref{sec:erroranalysis}. Section \ref{sec:numericalexps} gives numerical experiments and final comments.

\section{Problem setting}
\label{sec:problsetting}
In this section, we start defining the model problem, some notations, and function spaces that will be used throughout the paper. Let $\S:=\{\x\in\Rt:\|\x\|=1\}$ be the unit sphere, where $\|\cdot\|$ represents the Euclidean norm in $\Rt$. Let $\grads$ denote the tangential gradient \cite{dziuk2013finite} on $\S$ defined by,
\[
\grads u(\x)=\grad u(\x)-\left(\grad u(\x)\cdot\nsx\right)\nsx,
\]
where $\grad$ denotes the usual gradient in $\Rt$ and $\nsx$ represents the unit outer normal vector to $\S$ at $\x=(x_{1},x_{2},x_{3})$. We shall adopt standard notation for Sobolev spaces on $\S$ (see \eg,  \cite{hebey1996sobolev}). Given $1\leq p\leq \infty$ and $k$ non-negative integer, we denote the Sobolev spaces by,
\[
\begin{split}
L^{p}(\S)&=\left\{u(\x):\int_{\S}|u(\x)|^{p}ds(\x)<\infty\right\},\\
\Wkp&=\left\{u\in L^{p}(\S):\grads^{\alpha}u\in L^{p}(\S),\quad\mbox{for }0\leq|\al|\leq k\right\},
\end{split}
\]
where $\grads^{\al}=\grad_{s,1}^{\al_{1}}\grad_{s,2}^{\al_{2}}\grad_{s,3}^{\al_{3}}$ is the multi-index notation for the weak tangential derivatives up to order $k$, where $\al=(\al_{1},\al_{2},\al_{3})$ is a vector of non-negative integers with $|\al|=\al_{1}+\al_{2}+\al_{3}$. The function space $\Wkp$ is equipped with the norm
\[
\|u\|_{W^{k,p}(\S)}=
\begin{cases}
\left(\sum_{0\leq|\al|\leq k}\|\grads^{\al}u\|_{L^{p}(\S)}^{p}\right)^{1/p},&\mbox{for }1\leq p< \infty\\
\max_{0\leq |\al|\leq k}\|\grads^{\al}u\|_{L^{\infty}(\S)},& \mbox{for }p=\infty.
\end{cases}
\]
We set $H^{k}(\S)=W^{k,2}(\S)$ along with the standard inner product
\[
(u,v)=\int_{\S}u(\x)v(\x)ds(\x),\quad\mbox{for all }u,v\in L^{2}(\S),
\]
where $ds(\x)$ is the surface area measure. Additionally, we define the zero-averaged subspace of $\Ho$ as
\[
\HqS:=\left\{u\in\Ho:\int_{\S}u(\x)ds(\x)=0\right\},
\]
equipped with the $H^{1}$-norm. Throughout this paper, we use $C_{\ast}$ as a generic positive real constant, which may vary with the context and depends on the problem data and other model parameters. Also, we will use the notation $a\preceq b$, which means that there is a positive constant $C$, independent of the mesh size, such that $a\leq Cb$.

We now introduce the Poisson equation to be considered. Let $f\in\Lt$ be a given forcing (source) satisfying the compatibility condition,
\begin{equation}
\label{eq:compf}
\int_{\S}f(\x)ds(\x)=0.
\end{equation}
The model problem consists of finding a scalar function $u:\S\to \R$ satisfying 
\begin{equation}
\label{eq:strongform}
-\laps u(\x)=f(\x),\quad\mbox{for each }\x\in\S,
\end{equation}
where $-\laps=-\grads\cdot\grads$ denotes the Laplacian  on $\S$. We impose $\int_{\S}u(\x)ds(\x)=0$ to ensure uniqueness of solution. For any $u,v\in\HqS$, we define the bilinear functional $\A:\HqS\times\HqS\to\R$ such that
\begin{equation}
\label{eq:biformFE}
\A(u,v)=\int_{\S}\grads u(\x)\cdot \grads v(\x)ds(\x).
\end{equation}
The bilinear functional is well-defined on the space $\HqS\times\HqS$ and is continuous and coercive, i.e.,
there are positive constants $C_0$ and $C_1$ such that 
\[
\begin{split}
\left|\A(u,v)\right|&\leq C_{0}\|\grads u\|_{\Lt}\|\grads v\|_{\Lt},\quad\mbox{for each }u,v\in\HqS,\\
\A(u,u)&\geq C_{1}\|\grads u\|_{\Lt}^{2},\quad\mbox{for each }u\in\HqS.
\end{split}
\]
The variational formulation of \eqref{eq:strongform} reads: find $u\in\HqS$ such that
\begin{equation}
\label{eq:weakform}
\A(u,v)=(f,v),\quad\mbox{for each }v\in\HqS,
\end{equation}
in which $(f,v)=\int_{\S}f(\x)v(\x)ds(\x)$ and the source data $f$ satisfies \eqref{eq:compf}.

As a consequence of the Lax-Milgram theorem \cite{ciarlet2002finite,brenner2007mathematical}, problem \eqref{eq:weakform} has a unique solution. This solution is such that for some positive constant $C_{S}$,
\begin{equation}
\label{eq:stability}
\|\grads u\|_{\Lt}\leq C_{S}\|f\|_{\Lt}.
\end{equation}
Moreover, we have the regularity property: for $f\in\Lt$ satisfying \eqref{eq:compf}, the unique weak solution $u\in\HtHq$ of \eqref{eq:weakform} satisfies, for some positive constant $C_{R}$, 
\begin{equation}
\label{eq:regularity}
\|u\|_{\Ht}\leq C_{R}\|f\|_{\Lt}.
\end{equation}
A detailed proof of \eqref{eq:regularity} can be found in \cite[Theorem 3.3 pp.~304]{dziuk2013finite}.

\section{Spherical icosahedral geodesic grids}
\label{sec:grids}
In this section, we describe the discretization framework to approximate the sphere $\S$, following  \cite{baumgardner1985icosahedral,giraldo1997lagrange,heikes1995anumerical}. The spherical icosahedral grid can be constructed by defining an icosahedron inscribed inside $\S$, which has triangular faces and vertices. Each edge of the original icosahedron whose vertices are on $\S$ is projected onto the surface of $\S$.  We employ a recursive refinement of the grid by connecting the midpoints of the geodesic arcs to generate four sub-triangles in each geodesic triangle. This procedure may be applied to all geodesic triangles of the initial icosahedron to create a grid of desired resolution (see Figure \ref{fig:refinamenttriangle}).

\begin{figure}[!h]
\centering
\includegraphics[scale=0.26]{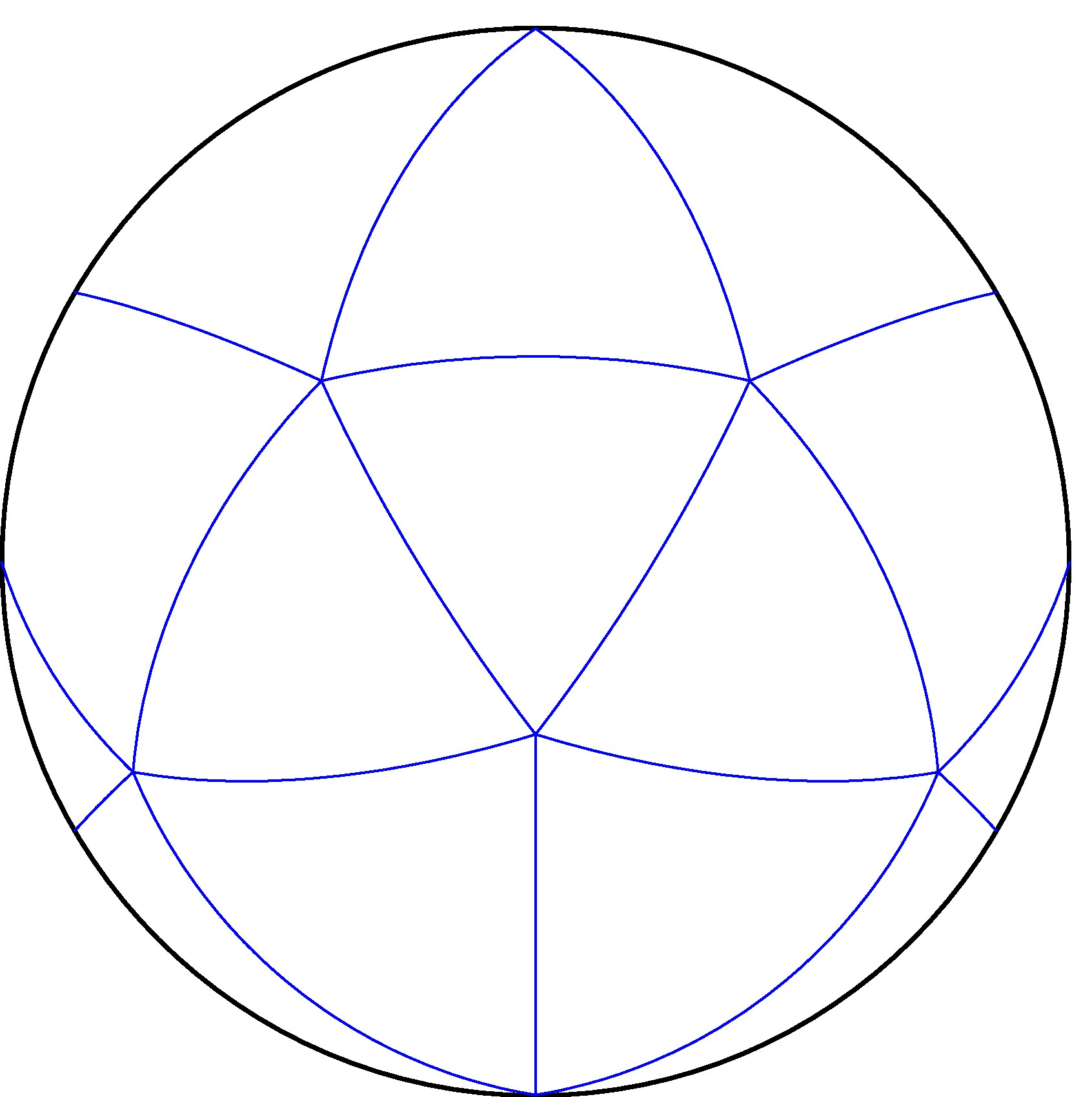}\hspace{0.2cm}
\includegraphics[scale=0.26]{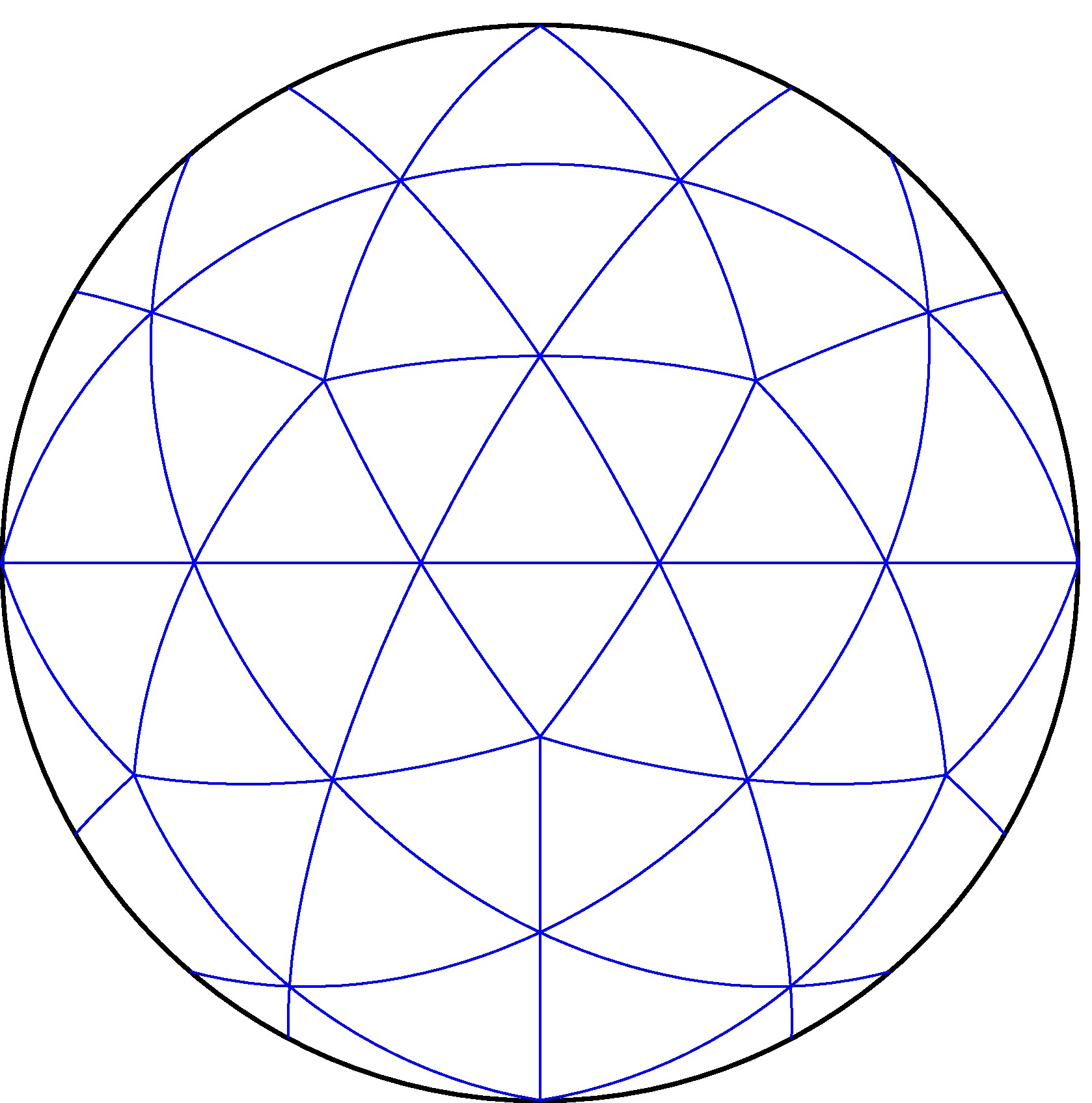}
\hspace{0.2cm}
\includegraphics[scale=0.26]{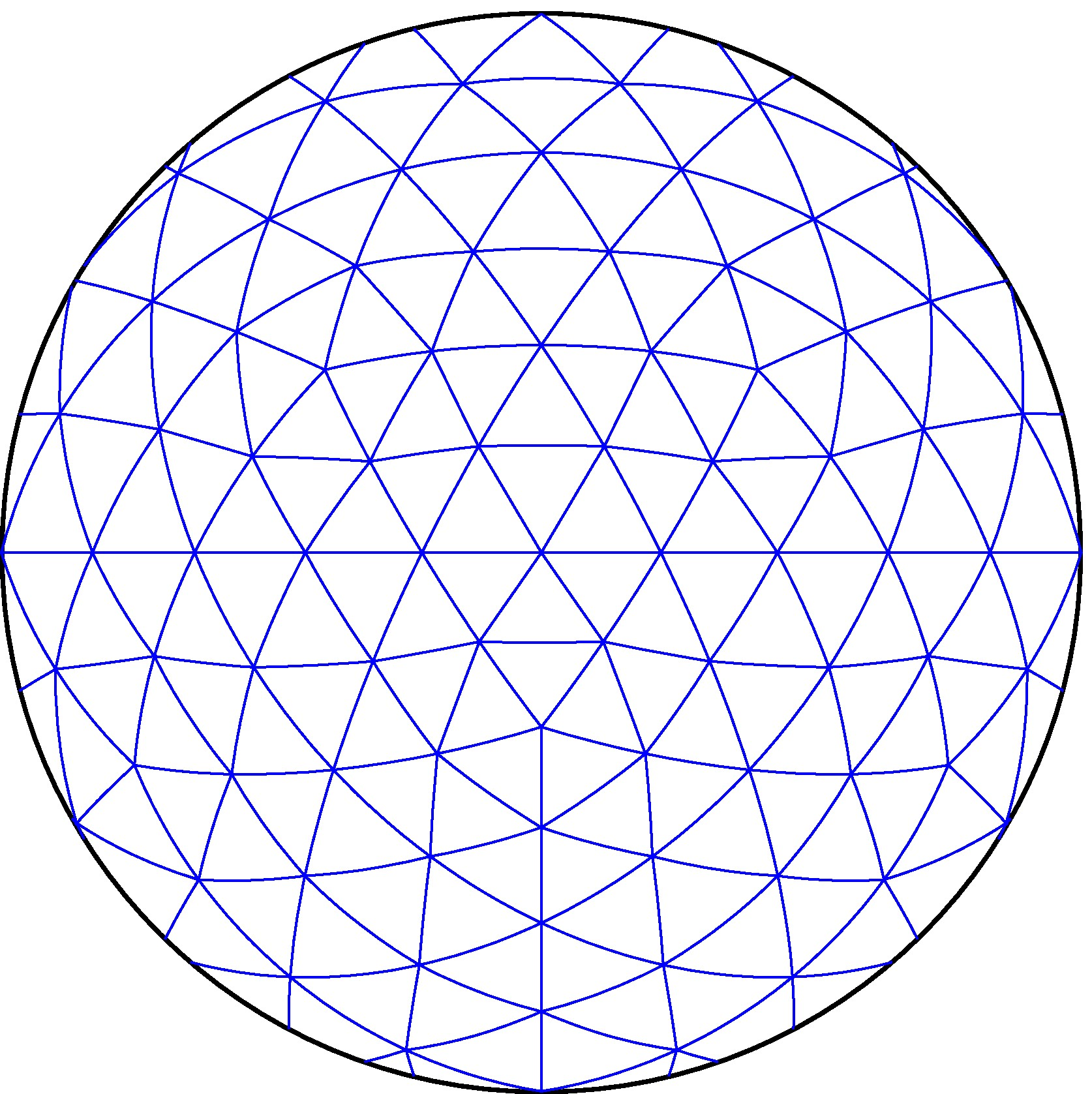}
\caption{\label{fig:refinamenttriangle} Spherical icosahedral grids with levels $0$, $1$  and $2$ with $12,42$  and $162$ vertices, respectively.}
\end{figure}

\subsection{Voronoï-Delaunay decomposition}

Let $\d(\x,\y)$ denote the geodesic distance between $\x$ and $\y$ on $\S$, defined by
\[
\d(\x,\y):=\arccos\langle\x,\y\rangle_{\Rt}\in [0,\pi],
\]
where $\langle\cdot,\cdot\rangle_{\Rt}$ denotes the Euclidean scalar product in $\Rt$. We will use the notation $\a(\cdot)$ and $\l(\cdot)$ for the standard measures of superficial area and curve length, respectively. 
Let $\SN=\{\xip\}_{i=1}^{N}$ denote the set of distinct vertices on $\S$, where $N=2^{2\ell}\cdot10+2$ is the number of vertices and $\ell$ is the level of grid refinement \cite{baumgardner1985icosahedral}. We denote $\Tgsi$ as a geodesic triangle with vertices $\xip,\xjp,\xkp\in\SN$ and define the spherical Delaunay (primal) decomposition on $\S$ as the set $\Ths:=\{\Tgsi:ijk\in\Sigma\}$, where $\Sigma$ is the set of indices such that $i,j,k$  are adjacent neighbors in $\SN$. Here the subscript $h$ denotes the main grid parameter to be defined later.  

The dual Voronoï decomposition of 
$\Ths$ is constructed following (\cite{renka1997algorithm,augenbaum1985construction}). For each vertex $\xip\in\SN$, $1\leq i\leq N$, its associated  Voronoï~cell $\Vi$ is given by
\[
\Vi:=\left\{\x\in\S:\d(\x,\xip)<\d(\x,\xjp),\quad\mbox{for each }1\leq j \leq N,\mbox{ and }j\neq i \right\}.
\]
Each Voronoï cell $\Vi$ consists of all points $\x\in\S$ closer to $\xip$ than any other vertex of $\SN$. Voronoï~cells are open and convex polygons on $\S$, limited by geodesic arcs at their boundaries. In particular, every two cells have an empty intersection, and $\bigcup_{i=1}^{N}\cl(\Vi)=\S$, where $\cl(\cdot)$ denotes the closure of the cell. Further, given two adjacent vertices $\xip$ and $\xjp$, we denote by $\gij=\cl(\Vi)\cap\cl(\Vj)\neq\emptyset$ the geodesic Voronoï~edge on $\S$ associated to the vertices $\xip$ and $\xjp$. Thus, for each vertex $\xip$ we can denote the set of indices of its neighbors $\xjp$ such that $\l(\gij)>0$, \ie, 
\[
\LiN=\left\{j:j\neq i\mbox{ and }\gij=\cl(\Vi)\cap\cl(\Vj)\neq \emptyset\right\}.
\]
Each $\Vi$ has smooth piecewise boundary $\pr\Vi$ formed by the Voronoï~dual edges $\gij$, with $j\in\LiN$, \ie, $\pr\Vi=\bigcup_{j\in\LiN}\gij$. For $\xip$ and $\xjp$ neighboring vertices with $j\in\LiN$, we denote by $\tij$ the Delaunay edge joining vertices $\xip$ and $\xjp$, and by $\xij$ and $\mij$ the midpoints of the geodesic edges $\tij$ (Delaunay) and $\gij$ (Voronoï) respectively. By construction, each geodesic Delaunay edge $\tij$ is perpendicular to geodesic Voronoï edge $\gij$ and the plane formed by $\gij$ and the origin, bisects $\tij$ at its midpoint $\xij$ (see \cite{du2003voronoi,renka1997algorithm}). Therefore $\d(\xip,\x)=\d(\x,\xjp)$, for each $\x\in\gij$, and we denote by $\nxgi$ the co-normal unit vector at the Voronoï edge $\gij$ lying in the plane $\TxS$ tangent to $\S$ at $\x$. Finally, $\nxgi$ is parallel to $\overrightarrow{\xip\xjp}$, \ie, $\nxgi\parallel\overrightarrow{\xip\xjp}$ for each $\x\in\gij$, see Figure \ref{fig:voronoicell}.

\begin{figure}[!h]
\centering
\includegraphics[scale=0.26]{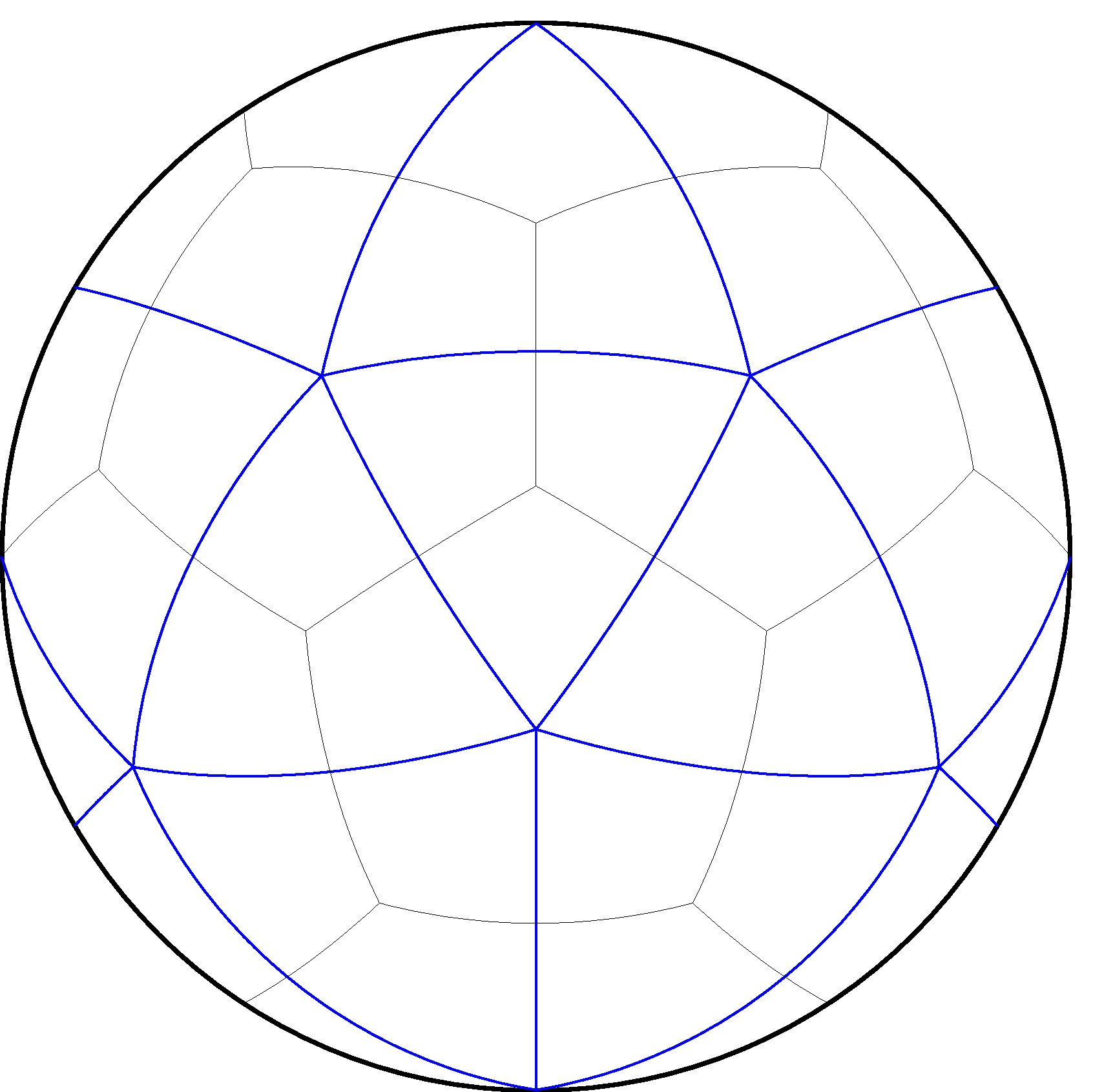}\hspace{0.2cm}
\includegraphics[scale=0.26]{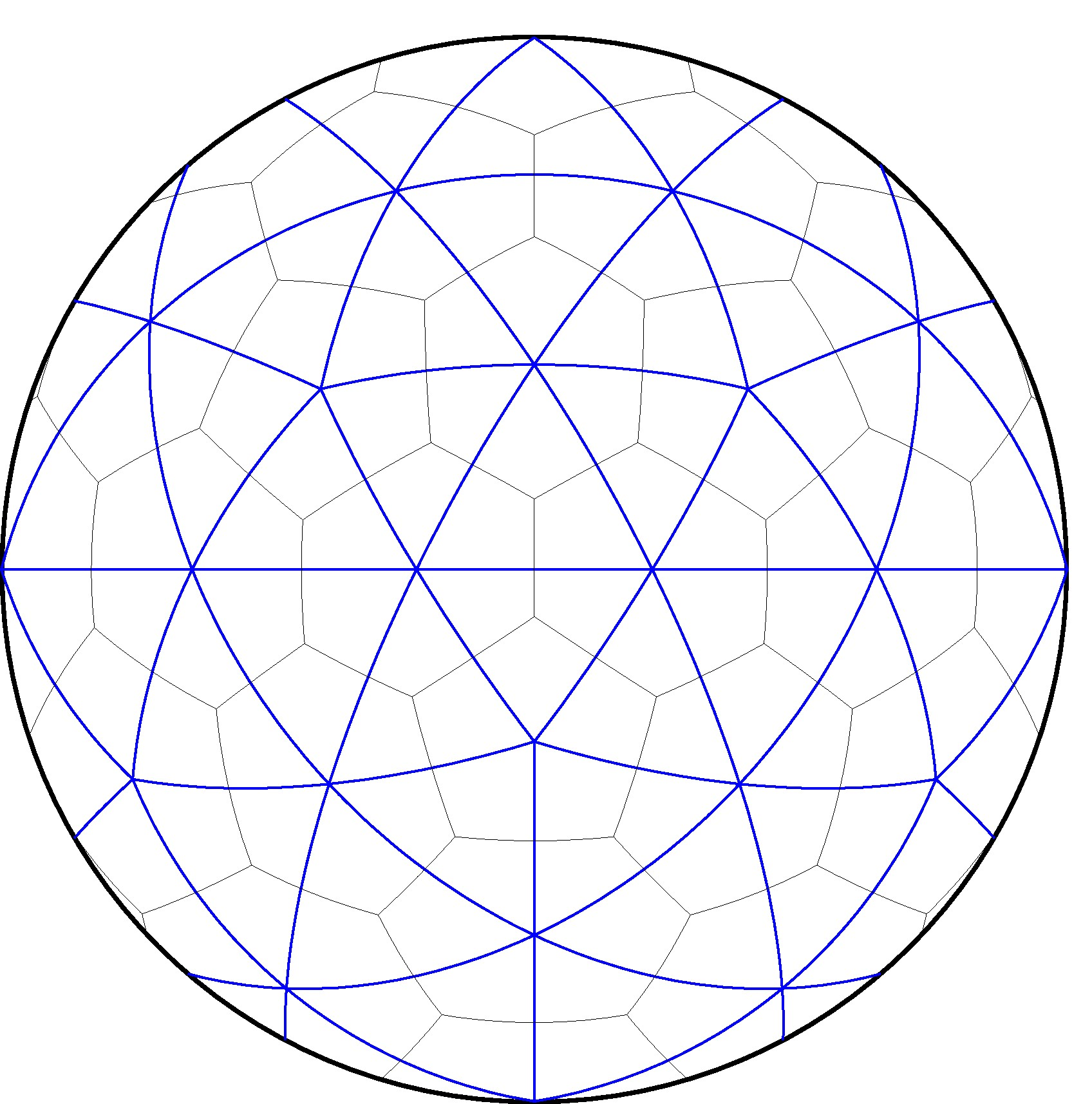}\hspace{0.2cm}
\begin{tikzpicture}[scale=0.6,font=\small]
\coordinate (xi) at (0,0);
\coordinate (xi1) at (3,0);
\coordinate (xi2) at (2.1,2.7);
\coordinate (xi3) at (-1,3);
\coordinate (xi4) at (-3.0,0.8);
\coordinate (xi5) at (-2.1,-2.2);
\coordinate (xi6) at (1.5,-2.6);
\coordinate (mi1) at (1.5,0);
\coordinate (mi2) at (1.1,1.3);
\coordinate (mi3) at (-0.5,1.5);
\coordinate (mi4) at (-1.5,0.4);
\coordinate (mi5) at (-1.0,-1.1);
\coordinate (mi6) at (0.7,-1.3);
\coordinate (m12) at (2.6,1.3);
\coordinate (m23) at (0.6,2.8);
\coordinate (m34) at (-2,1.9);
\coordinate (m45) at (-2.5,-0.7);
\coordinate (m56) at (-0.4,-2.4);
\coordinate (m61) at (2.2,-1.3);
\coordinate (v1) at (1.5,1);
\coordinate (v2) at (0.5,1.8);
\coordinate (v3) at (-1.3,1.2);
\coordinate (v4) at (-1.7,-0.4);
\coordinate (v5) at (-0.3,-1.8);
\coordinate (v6) at (1.5,-0.8);
\coordinate (ngix) at (1.5,0.3);

\draw[Blue,dashed] (xi) -- (xi1);
\draw[Blue,dashed] (xi)--(xi2);
\draw[Blue,dashed] (xi)--(xi3);
\draw[Blue,dashed] (xi)--(xi4);
\draw[Blue,dashed] (xi)--(xi5);
\draw[Blue,dashed] (xi)--(xi6);
\draw[Blue,dashed] (xi1)--(xi2);
\draw[Blue,dashed] (xi2)--(xi3);
\draw[Blue,dashed] (xi3)--(xi4);
\draw[Blue,dashed] (xi4)--(xi5);
\draw[Blue,dashed] (xi5)--(xi6);
\draw[Blue,dashed] (xi6)--(xi1);
\draw[Black,->] (ngix)--(2.5,0.3) node[right]{$\nxgi$};
\draw (v1)--(v2);
\draw (v2)--(v3);
\draw (v3)--(v4);
\draw (v4)--(v5);
\draw (v5)--(v6);
\draw (v6)--(v1) node[right]{$\gij$};

\filldraw [black,thick](xi1) circle (1.5pt);
\filldraw [black,thick](xi2) circle (1.5pt);
\filldraw [black,thick](xi3) circle (1.5pt);
\filldraw [black,thick](xi4) circle (1.5pt);
\filldraw [black,thick](xi5) circle (1.5pt);
\filldraw [black,thick](xi6) circle (1.5pt);

\filldraw [black,thick](xi) circle (1.5pt) node[below right]{$\xip$};
\filldraw [black,thick](xi1) circle (1.5pt) node[below right]{$\xjp$};
\filldraw [black,thick](mi1) circle(0.5pt) node[below right]{$\xij$};
\filldraw [black,thick](v3) node[below right]{$\Vi$};
\end{tikzpicture}
\caption{\label{fig:voronoicell}Voronoï-Delaunay decomposition on $\S$ with levels $0$ and $1$, and the geometric configuration of Voronoï cell and its associated triangles.}
\end{figure}
\begin{rem}
The Voronoï-Delaunay decomposition constructed as described above will be called a non-optimized grid and denoted throughout this paper as Voronoï-Delaunay decomposition NOPT.
\end{rem}

\begin{rem} 
In the NOPT decomposition, the constrained cell centroids normally do not coincide with the nodes. However, this can be an important property for discretization. Therefore, we will also consider Spherical Centroidal Voronoï Tessellations (SCVT). These are constructed through an iterative method (Lloyd's algorithm, see \cite{du2003constrained,du2003voronoi}). In the SCVT tessellation, the vertex generating each cell  $\Vi\subset\S$, is the  \emph{constrained cell centroid} $ \xci$,   \ie, the
minimum of the following function
\[
F(\x)=\int_{\Vi}\|\y-\x\|^{2}ds(\y).
\] 
\end{rem}
Given a Voronoï-Delaunay decomposition, we consider some grid parameters previously defined in \cite{du2003voronoi,du2005finite}. Let $\hi=\max_{\x\in \Vi}\d(\xip,\x)$ be the maximum geodesic distance between vertex $\xip$ and the points in its associated cell $\Vi$ and $h=\max_{i=1,\dots,N}h_{i}$.

In addition, we consider the following shape regularity or almost uniform conditions given by \cite{mishev1998finite,li2000generalized,ciarlet2002finite}:
\begin{defn}[Almost uniform]
\label{def:aluniform}
We say that a Voronoï-Delaunay decomposition $\VGs=\{\xip,\Vi\}_{i=1}^{N}$  on $\S$ is finite volume regular if for every spherical polygon $\Vi\in\VGs$ with boundary $\pr\Vi=\bigcup_{j\in\LiN}\gij$, there exist positive constants $C_0$ and $C_1$, independent of $h$ such that
\[
\frac{1}{C_{0}}h\leq \l(\gij) \leq C_{0}h,\quad\mbox{and }\quad
\frac{1}{C_{1}}h^{2}\leq \a(\Vi) \leq C_{1}h^{2}.
\]
\end{defn}
From now onward, we assume that the Voronoï-Delaunay decompositions NOPT and SCVT are almost uniform grids on $\S$.

\subsection{Geometric correspondence}

We describe here geometric relations between $\S$ and its polyhedral approximation $\Sh$, using the framework given by \cite{demlow2009higher,ju2009finite}. First, we assume that $\S$ is approximated by a polyhedral sequence $\Sh$ formed by the decomposition $\Th$ (planar triangles) as $h$ goes to zero. The smooth and bijective mapping $\Prj:\Sh\to \S$ is defined as radial projection of any point $\xp\in\Sh$ onto the spherical surface, \ie, $\Prj(\xp)=\xp/\|\xp\|$ (Figure \ref{fig:radialproj}). Observe that, by construction, the vertices ($\SN=\{\xip\}_{i=1}^{N}$) of the polyhedral $\Sh$ belong to the surface of $\S$ \ie, $\SN=\Sh\cap\S$. This implies that $\Sh=\bigcup_{ijk\in\Sigma}\Tgi$ and $\S=\bigcup_{ijk\in\Sigma}\Tgsi$, where $\Sigma$ is the set of neighboring vertices in $\SN$. Notice that the polyhedral of level zero is the initial icosahedron. 

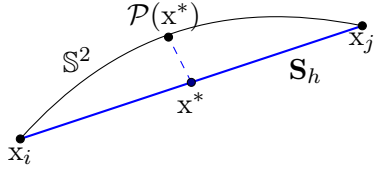
\begin{figure}[th]
\begin{center}
\begin{tikzpicture}[scale=1.5]
\draw [blue,thick](0,0) to (3,1);
\draw (0,0) to[bend left] (3,1);
\draw (0,-0.15) node {$\x_{i}$};
\draw [fill=black](0,0) circle (1.0pt);
\draw (3,0.85) node {$\x_{j}$};
\draw [fill=black](3,1) circle (1.0pt);
\draw [blue,dashed](1.5,0.5) to (1.3,0.9);
\draw (1.26,1.08) node {$\Prj(\xp)$};
\draw [fill=black](1.3,0.9) circle (1.0pt);
\draw (1.5,0.32) node {$\xp$};
\draw [fill=DarkBlue](1.5,0.5) circle (1.0pt);
\draw (2.5,0.6) node {$\Sh$};
\draw (0.5,0.7) node {$\S$};
\end{tikzpicture}
\end{center}
\caption{\label{fig:radialproj}Radial projection of polyhedral $\Sh$ on sphere $\S$.}
\end{figure}

We also consider the following spherical shell in $\Rt$: 
\[
\Oh:=\left\{\xp\in\Rt\setminus\{0\}:1-Ch^{2}<\|\xp\|<1+Ch^{2}\right\},
\]
 with $C$ chosen such that $\S$ and $\Sh$ are contained in $\Oh$.  For functions $u\in\Ht$, we denote by $\Eu$ the extension of $u$  to $\Oh$ given by $\Eu(\xp)=u(\Prj(\xp))$, for each $\xp\in\Sh$. The following result has been shown in  \cite[Proposition 1, pp. 1677]{du2003voronoi}:

\begin{prp}
\label{prp:duextension}
For any $\xp\in\Oh$ and $\x=\Prj(\xp)\in\S$, and $i,j\in\{1,2,3\}$,
\[
\begin{split}
\grads u(\x)=\grad\Eu(\x),& \quad  \grad(\pri\Eu(\x))=\grads(\prsi u(\x))-(\prsi u(\x))\nsx,\\
\|\xp\|\grad\Eu(\xp)=\grad\Eu(\x),& \quad  \|\xp\|^{2}\pri\prj\Eu(\xp)=\pri\prj\Eu(\x),
\end{split}
\]
where $\pri$ denotes partial derivative with respect to $x_i$ and $\prsi$ denotes the $i$-th component of the tangential derivative, for $i=1,2,3$. 
\end{prp}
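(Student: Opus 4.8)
The plan is to exploit a single structural fact: the radial extension $\Eu(\xp)=u(\Prj(\xp))=u(\xp/\|\xp\|)$ is positively homogeneous of degree zero, i.e. $\Eu(\lambda\xp)=\Eu(\xp)$ for every $\lambda>0$. All four identities are then consequences of this homogeneity together with Euler's relation. First I would record the elementary fact that if a (sufficiently smooth) function $F$ is homogeneous of degree $k$, then each first derivative $\pri F$ is homogeneous of degree $k-1$ and each second derivative $\pri\prj F$ is homogeneous of degree $k-2$; this follows by differentiating $F(\lambda\xp)=\lambda^{k}F(\xp)$ in $\xp$. Applied to $\Eu$ (with $k=0$), this gives that $\grad\Eu$ is homogeneous of degree $-1$ and the Hessian entries $\pri\prj\Eu$ are homogeneous of degree $-2$.

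The scaling identities then follow immediately. Writing $\x=\Prj(\xp)=(1/\|\xp\|)\xp$ and taking $\lambda=1/\|\xp\|$ in the homogeneity relation for $\grad\Eu$ yields $\grad\Eu(\x)=\|\xp\|\grad\Eu(\xp)$, which is the third identity; the same substitution in the relation for $\pri\prj\Eu$ gives $\pri\prj\Eu(\x)=\|\xp\|^{2}\pri\prj\Eu(\xp)$, the fourth. For the first identity I would invoke Euler's relation for the degree-zero function $\Eu$, namely $\xp\cdot\grad\Eu(\xp)=0$; evaluating at a point $\x\in\S$, where $\nsx=\x$, shows $\grad\Eu(\x)\cdot\nsx=0$. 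Since $\Eu|_{\S}=u$ and the tangential gradient is independent of the chosen extension, the definition $\grads u(\x)=\grad\Eu(\x)-(\grad\Eu(\x)\cdot\nsx)\nsx$ collapses to $\grads u(\x)=\grad\Eu(\x)$.

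The remaining second identity is the one requiring genuine computation, and I expect it to be the main obstacle. The key is to represent $\pri\Eu$, which is homogeneous of degree $-1$, in the form $\pri\Eu(\xp)=\|\xp\|^{-1}(\prsi u)(\Prj(\xp))$, using the first identity to identify $\pri\Eu$ on $\S$ with the $i$-th tangential derivative $\prsi u$. Differentiating this product with the chain and product rules, and then evaluating at $\x\in\S$ (so that $\|\xp\|=1$), I would split $\grad(\pri\Eu)$ into two contributions: the derivative of the scalar factor $\|\xp\|^{-1}$, whose gradient at $\x$ is $-\nsx$ and produces the term $-(\prsi u(\x))\nsx$, and the derivative of the composite $(\prsi u)\circ\Prj$, which is exactly the radial extension of the scalar field $\prsi u$ and hence, by the first identity applied to $\prsi u$, contributes $\grads(\prsi u(\x))$. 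Care must be taken with the bookkeeping of the two terms and with the sign and normalization of $\grad\|\xp\|^{-1}=-\|\xp\|^{-3}\xp$, but assembling the pieces gives $\grad(\pri\Eu(\x))=\grads(\prsi u(\x))-(\prsi u(\x))\nsx$, completing the proof.
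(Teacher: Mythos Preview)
Your proposal is correct. The paper does not actually supply a proof of this proposition: it states the result and attributes it to \cite[Proposition~1, p.~1677]{du2003voronoi}. Your argument via degree-zero homogeneity of the radial extension and Euler's relation is exactly the standard route to these identities; the scaling identities follow immediately from the inherited homogeneity of first and second derivatives, the first identity from $\xp\cdot\nabla\Eu(\xp)=0$, and the second from writing $\pri\Eu(\xp)=\|\xp\|^{-1}(\prsi u)(\Prj(\xp))$ and differentiating. There is nothing further to compare against in the paper itself.
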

The following result compares the norms of functions defined on $\S$ and $\Sh$. A proof is given in \cite{demlow2009higher}.

\begin{prp}
\label{prp:equivnormext}
Let $u\in\Wtp$ with $1\leq p\leq \infty$. There exist positive constants $C_0, C_1$ and $C_2$ such that for $h$ small enough,
\[
\begin{split}
\frac{1}{C_{0}}\|u\|_{L^{p}(\Tgsi)}&\leq\|\Eup\|_{L^{p}(\Tgi)}\leq C_{0}\|u\|_{L^{p}(\Tgsi)},\\
\frac{1}{C_{1}}\|\grads u\|_{L^{p}(\Tgsi)}&\leq\|\grad\Eup\|_{L^{p}(\Tgi)}\leq C_{1}\|\grads u\|_{L^{p}(\Tgsi)},\\
\|\grad^{\al}\Eup\|_{L^{p}(\Tgi)}&\leq C_{2}\sum_{0\leq |\al|\leq2}\|\grads^{\al} u\|_{L^{p}(\Tgsi)},
\end{split}
\]
where $\Eup$ is the extension of $u$ to $\Oh$ restricted to $\Sh$, and $\grad^{\al},\grads^{\al}$ denote the usual derivatives and tangential derivatives up to order $2$.
\end{prp}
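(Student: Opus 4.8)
The plan is to reduce every inequality to a change of variables under the radial projection $\Prj:\Tgi\to\Tgsi$ together with the pointwise differential identities recorded in Proposition \ref{prp:duextension}. Writing $\x=\Prj(\xp)=\xp/\|\xp\|$ for $\xp\in\Tgi$, the surface measures on the flat and spherical triangles are related by $ds(\x)=\mu_h(\xp)\,d\sigma(\xp)$, where $\mu_h$ is the Jacobian of $\Prj$ restricted to $\Tgi$. The first thing I would do is establish that $\mu_h$ is uniformly bounded above and below by constants independent of $h$; concretely, since $\Tgi\subset\Sh\subset\Oh$ forces $\|\xp\|=1+\Or(h^2)$ and the plane of $\Tgi$ is $\Or(h)$-close to the tangent plane $\TxS$, one has $\mu_h=1+\Or(h^2)$, hence $\tfrac12\leq\mu_h\leq 2$ for $h$ small enough. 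This single geometric estimate underlies all three lines.

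Given this, the $L^p$ equivalence (first line) is immediate: for $1\leq p<\infty$, a change of variables in $\|\Eup\|_{L^p(\Tgi)}^p=\int_{\Tgi}|u(\Prj(\xp))|^p\,d\sigma(\xp)$ gives $\int_{\Tgsi}|u(\x)|^p\mu_h^{-1}\,ds(\x)$, and the two-sided bound on $\mu_h$ yields the claim; the case $p=\infty$ follows even more directly since $\Prj$ is a bijection, so the essential suprema over $\Tgi$ and $\Tgsi$ coincide. For the gradient equivalence (second line) I would combine the identities $\grads u(\x)=\grad\Eu(\x)$ and $\|\xp\|\grad\Eu(\xp)=\grad\Eu(\x)$ from Proposition \ref{prp:duextension}, which give the pointwise relation $\grad\Eup(\xp)=\|\xp\|^{-1}\grads u(\Prj(\xp))$. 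Thus $|\grad\Eup(\xp)|$ and $|\grads u(\Prj(\xp))|$ differ only by the factor $\|\xp\|^{-1}=1+\Or(h^2)$, and the same change of variables converts this pointwise comparison into the stated norm equivalence.

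For the second-order bound (third line) I would again pass through Proposition \ref{prp:duextension}, using $\|\xp\|^2\pri\prj\Eu(\xp)=\pri\prj\Eu(\x)$ to reduce ambient second derivatives on $\Tgi$ to ambient second derivatives on $\Tgsi$ up to the harmless factor $\|\xp\|^{-2}$, and then $\grad(\pri\Eu(\x))=\grads(\prsi u(\x))-(\prsi u(\x))\nsx$ to express those in terms of tangential derivatives of $u$. The curvature term $-(\prsi u)\nsx$ is exactly why this estimate is one-sided: a full ambient second derivative of the extension picks up a contribution from the first tangential derivative of $u$, so it can only be controlled by the \emph{sum} $\sum_{0\leq|\al|\leq 2}\|\grads^\al u\|_{\LpT}$ and not by the second-order seminorm alone. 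Collecting these pointwise bounds and applying the measure comparison once more gives the third inequality, with $p=\infty$ following from the pointwise estimates directly.

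The main obstacle, and the step deserving the most care, is the uniform control of the Jacobian $\mu_h$ and of the factors $\|\xp\|^{\pm1},\|\xp\|^{\pm2}$ so that all constants $C_0,C_1,C_2$ are genuinely independent of $h$ and of the particular triangle $ijk$; this rests on the shell estimate $\|\xp\|=1+\Or(h^2)$ and on bounding the tilt of $\Tgi$ relative to $\TxS$, both of which follow from the almost-uniform (finite-volume-regular) hypothesis on the decomposition. A secondary technical point is to treat $p=\infty$ in parallel throughout, replacing the integrated change-of-variables arguments by the pointwise identities, which is routine once the pointwise relations above are in hand.
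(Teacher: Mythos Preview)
Your proposal is correct and follows the standard route: control the Jacobian of the radial projection by $1+\Or(h^2)$, then transport the pointwise identities of Proposition~\ref{prp:duextension} through the change of variables, treating $p=\infty$ by the same pointwise relations. The paper does not supply its own proof of this proposition; it simply cites \cite{demlow2009higher}, and the argument there is precisely the one you outline (Jacobian bound plus the differential identities relating $\grad\Eu$ and $\grads u$). So there is nothing to compare beyond noting that you have written out what the cited reference contains.
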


\section{A Voronoï-based finite volume method}
\label{sec:fvm}

In this section, we seek an approximate solution of \eqref{eq:strongform} via a finite difference/volume scheme. First, from Gauss theorem, we have that
\[
-\int_{\Vi}\laps u(\x)ds(\x)=-\int_{\pr\Vi}\grads u(\x)\cdot\nxvi d\gamma(\x),
\]
where $\nxvi$ denotes the co-normal unit vector on $\pr\Vi$ at $\x$, and $d\gamma(\x)$ is the geodesic length measure. Since $\pr\Vi=\cup_{j\in\LiN}\gij$, we have that
\[
-\sum_{j\in\LiN}\int_{\gij}\grads u(\x)\cdot\nxgi d\gamma(\x)=\int_{\Vi}f(\x)ds(\x).
\]
We denote the continuous flux of $u$ across the edge $\gij$ by
\begin{equation}
\label{eq:cflux}
\Fcij(u):=- \int_{\gij}\grads u(\x)\cdot\nxgi d\gamma(\x),
\end{equation}
and define its central difference approximation
\begin{equation}
\label{eq:dflux}
\Fdij(\uh):=-\l(\gij)\frac{\uh(\xjp)-\uh(\xip)}{\|\xjp-\xip\|}.
\end{equation}
Additionally, for each cell $\Vi$, let $f_{i}$ denote the mean value of the data $f$ on $\Vi$, \ie,
\begin{equation}
\label{eq:fi}
f_{i}=\frac{1}{\a(\Vi)}\int_{\Vi}f(\x)ds(\x).
\end{equation}
The Voronoï-based finite volume scheme is defined as
\begin{equation}
\label{eq:FVscheme}
(\lapd(\uh))_{i}:=\frac{1}{\a(\Vi)}\sum_{j\in\LiN}\Fdij(\uh)=f_{i},\quad\mbox{for }1\leq i\leq N,
\end{equation}
where $\lapd$ is a discretization of the Laplacian, $u_{h}$ is an approximate solution and $f_{i}$ is defined in \eqref{eq:fi}. To emphasize the dependence of the grid, we always will use the subscript $h$. The finite volume scheme \eqref{eq:FVscheme} is conservative, since we have that,
\[
\sum_{i=1}^{N}\sum_{j\in\LiN}\Fdij(\uh)=0,
\]
which follows from $\Fdij=-\Fdji$ if the vertices $\xip$ and $\xjp$ are neighbors with $\l(\gij)>0$.

\subsection{Discrete function spaces}

In this subsection, we introduce some local function spaces on $\S$, as in \cite{dziuk2013finite,demlow2009higher,ju2009finite}. We define the Lagrange finite element space on a polyhedral $\Sh$ as
\[
\Po:=\left\{\uhp\in C^{0}(\Sh):\uhp\big|_{\Tgi},\mbox{is linear affine for each }\Tgi\in\Th\right\},
\]
and the corresponding lifted finite element space on $\S$,
\[
\Pos:=\left\{\uh\in C^{0}(\S):\uh=\uhp\circ\Prji,\,\mbox{for each }\uhp\in\Po\right\},
\]
where $\Prji$ denotes the inverse of the radial projection $\Prj$. For the approximation of functions on $\HqS$, we consider $\U$ the the zero-averaged subspace of $\Pos$ given by
\[
\U(\S):=\left\{\uh\in\Pos:\int_{\S}\uh(\x)ds(\x)=0\right\}.
\]
Notice that, from Proposition \ref{prp:duextension}, we can get $\U\subset\HqS$ endowed with the $H^{1}$-norm. As in \cite{li2000generalized,lin2013finite}, we define the piecewise constant function space associated with the dual Voronoï~decomposition given by
\[
\Vhs(\S):=\left\{v\in\Lt:v\big|_{\Vi},\mbox{ is constant in each }\Vi\mbox{ for }1\leq i\leq N\right\}.
\]
We introduce interpolation operators $\IU$ and $\IV$ mapping functions defined on $\S$ onto $\U$ and $\Vhs$, respectively. Note that, given the function values at the vertices of the  Voronoï grid, the operators are uniquely defined. The following interpolation estimates will be used in our analysis and are shown in \cite[Proposition 3, pp.~1677]{du2005finite}.

\begin{prp}[Interpolation estimates]
\label{prp:interp}
Assume $u\in \WtpHq$ for $2\leq p\leq \infty$  and $v\in \HtHq$. Then, for $h$ small enough, there exist positive constants $C_U$ and $C_V$ independent of $h$ such that,
\[
\begin{split}
\|u-\IU(u)\|_{\Wkp}&\leq C_{U}h^{2-k}\|u\|_{\Wtp},\quad\mbox{ for each }k\in\{0,1\},\\
\|v-\IV(v)\|_{\Lt}&\leq C_{V}h\|v\|_{\Ht}.
\end{split}
\]
\end{prp}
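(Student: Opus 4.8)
The plan is to reduce both estimates to purely local statements on a single cell, transfer each cell to a flat reference configuration where classical polynomial approximation theory applies, and then reassemble by summing over the grid using the almost-uniform shape regularity of Definition \ref{def:aluniform}. Throughout, the geometric dictionary of Propositions \ref{prp:duextension} and \ref{prp:equivnormext} is what lets us move norms and derivatives between $\S$ and its polyhedral approximation $\Sh$.

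For the first estimate I would exploit that $\IU(u)$ is determined entirely by the nodal values $\{u(\xip)\}$, so its planar representative $\overline{\IU(u)}\in\Po$ is exactly the standard affine Lagrange interpolant of the extension $\Eup$ on each flat triangle $\Tgi\in\Th$ (the two agree at vertices, which lie on $\S$). Writing $u-\IU(u)$ as the lift of $\Eup-\overline{\IU(u)}$ and invoking the norm equivalence of Proposition \ref{prp:equivnormext}, the spherical $\Wkp$-error is controlled, triangle by triangle, by the Euclidean interpolation error $\|\Eup-\overline{\IU(u)}\|_{W^{k,p}(\Tgi)}$. On each flat triangle the classical Bramble--Hilbert estimate for affine interpolation gives $\|\Eup-\overline{\IU(u)}\|_{W^{k,p}(\Tgi)}\preceq h^{2-k}\,|\Eup|_{W^{2,p}(\Tgi)}$ for $k\in\{0,1\}$ and all $2\le p\le\infty$; the second-order Euclidean seminorm is in turn bounded by the tangential second derivatives of $u$ through Propositions \ref{prp:duextension} and \ref{prp:equivnormext}. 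Summing the $p$-th powers (or taking the maximum when $p=\infty$) over $ijk\in\Sigma$ and returning to $\S$ yields the claimed bound with $C_U$ independent of $h$.

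For the second estimate the interpolant is piecewise constant, $\IV(v)\big|_{\Vi}=v(\xip)$. Since $\S$ is two-dimensional, the Sobolev embedding $\Ht\hookrightarrow C^{0}(\S)$ makes the nodal value $v(\xip)$ well defined, which is exactly why the hypothesis $v\in\HtHq$ rather than merely $v\in\Ho$ is required. On each cell I would estimate $\|v-v(\xip)\|_{L^{2}(\Vi)}$ by splitting $v-v(\xip)=(v-\overline{v}_{i})+(\overline{v}_{i}-v(\xip))$ with $\overline{v}_{i}=\a(\Vi)^{-1}\int_{\Vi}v\,ds$. A Poincaré--Wirtinger inequality on $\Vi$, whose constant stays uniform thanks to the shape regularity of Definition \ref{def:aluniform}, bounds $\|v-\overline{v}_{i}\|_{L^{2}(\Vi)}\preceq h\,\|\grads v\|_{L^{2}(\Vi)}$, while a Bramble--Hilbert and scaling argument applied to the functional $v\mapsto v(\xip)-\overline{v}_{i}$, which is bounded on $H^{2}(\Vi)$ and annihilates constants, controls $\|\overline{v}_{i}-v(\xip)\|_{L^{2}(\Vi)}$ by $h\,\|v\|_{H^{2}(\Vi)}$. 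Squaring, summing over $1\le i\le N$, and using $\sum_{i}\|v\|_{H^{2}(\Vi)}^{2}=\|v\|_{\Ht}^{2}$ then yields $\|v-\IV(v)\|_{\Lt}\preceq h\,\|v\|_{\Ht}$.

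I expect the main obstacle to be the geometric transfer to a reference configuration for the dual cells: unlike the triangles $\Tgi$, the Voronoï cells $\Vi$ are curved spherical polygons for which Propositions \ref{prp:duextension}--\ref{prp:equivnormext} are not directly stated, so I would need to set up local charts (for instance geodesic normal coordinates centred at $\xip$, or projection onto the tangent plane $\TxS$) and verify that the Poincaré constant and the Bramble--Hilbert constant remain uniform under the almost-uniform assumption as $h\to0$. Keeping precise track of the powers of $h$ produced by this scaling, and ensuring that the distortion of the metric over a cell of diameter $\Or(h)$ contributes only lower-order corrections, is the delicate point; once that is secured, the summation over cells is routine.
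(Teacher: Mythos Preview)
The paper does not supply its own proof of this proposition; immediately before the statement it writes ``are shown in \cite[Proposition 3, pp.~1677]{du2005finite}'' and then simply states the result. So there is no in-paper argument to compare against.

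Your strategy is the standard one that underlies the cited reference and is sound. For $\IU$ you correctly observe that the planar representative of $\IU(u)$ coincides with the affine nodal interpolant of $\Eup$ on each $\Tgi$ (the vertices lie on $\S$, so nodal values agree), after which Bramble--Hilbert on flat triangles plus the transfer estimates of Propositions \ref{prp:duextension}--\ref{prp:equivnormext} give the $h^{2-k}$ bound; this is exactly how such estimates are proved in the surface finite element literature that the paper leans on (Demlow, Dziuk--Elliott). For $\IV$ your split $v-v(\xip)=(v-\overline v_i)+(\overline v_i-v(\xip))$ with Poincar\'e--Wirtinger on the first piece and a scaled Bramble--Hilbert/Sobolev embedding bound on the second also works; note that since the functional $v\mapsto v(\xip)-\overline v_i$ only annihilates constants while being bounded on $H^2$, the scaling actually produces $h\,|v|_{H^1(\Vi)}+h^{2}\,|v|_{H^2(\Vi)}$, which is still $\preceq h\,\|v\|_{H^{2}(\Vi)}$ as you claim. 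The obstacle you flag---that Propositions \ref{prp:duextension}--\ref{prp:equivnormext} are stated for triangles, not for the curved Vorono\"{\i} cells, so one must set up a local chart (tangent-plane or geodesic normal coordinates) and check uniformity of the Poincar\'e and Bramble--Hilbert constants under Definition \ref{def:aluniform}---is indeed the one nontrivial technical step, and it is precisely what the cited paper of Du, Gunzburger and Ju handles.
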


We also define the linear transference interpolation $\I:\U\to\Vhs$ as
\[
\I(\uh)(\x)=\sum_{i=1}^{N}\uh(\xip)\chi_{i}(\x),\quad\mbox{for each }\uh\in\U,
\]
where $\chi_{i}$ represents the characteristic function corresponding to the cell $\Vi$, with $1\leq i\leq N$.

In order to show basic estimates, we shall need the following inverse estimate for finite element functions (see \eg,  \cite{ciarlet2002finite,brenner2007mathematical,demlow2009higher}), which follows from the almost-uniformity of the decomposition $\Ths$. The proof is valid on $\S$ under conditions of Propositions \ref{prp:duextension} and \ref{prp:equivnormext}, cf. \cite[Proposition 2.7, pp.~812]{demlow2009higher} or \cite[Lemma 3.4, pp.~524]{kovacs2018maximum}.

\begin{lem}[Inverse estimate]
\label{lem:propinverse}
Let $\Ths$ be an almost uniform Voronoï-Delaunay decomposition on $\S$. Assume that $l,m$ are non negative integers with $
l\leq m$ and $1\leq p,q\leq \infty$, such that $\U\subset W^{l,p}(\Tgsi)\cap W^{m,q}(\Tgsi)$. Then, there exists a positive constant $C$ independent of $h$, such that $\vh\in\U$ satisfies,
\[
\|\vh\|_{W^{m,p}(\Tgsi)}\leq Ch^{l-m-2\left(1/q-1/p\right)}\|\vh\|_{W^{l,q}(\Tgsi)}.
\]
\end{lem}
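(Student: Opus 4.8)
The plan is to prove the inverse estimate
\[
\|\vh\|_{W^{m,p}(\Tgsi)}\leq Ch^{l-m-2\left(1/q-1/p\right)}\|\vh\|_{W^{l,q}(\Tgsi)}
\]
by transferring the problem from the curved geodesic triangle $\Tgsi$ on $\S$ to a fixed reference triangle, where the corresponding statement for polynomials is classical, and then pulling everything back. First I would use the norm equivalence of Proposition \ref{prp:equivnormext} together with the gradient relations of Proposition \ref{prp:duextension} to replace all Sobolev norms of $\vh$ over the spherical triangle $\Tgsi$ by the corresponding norms of the lift $\vhp=\vh\circ\Prj$ over the planar triangle $\Tgi\in\Th$. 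Since $\Prj$ is smooth and bijective with derivatives bounded above and below on the thin shell $\Oh$ (the factors $\|\xp\|$ appearing in Proposition \ref{prp:duextension} are $1+\Or(h^{2})$), this change of norms costs only $h$-independent constants and does not affect the scaling power of $h$ in the final estimate.

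Next I would carry out the standard affine scaling argument on the planar piece. Because $\vh\in\U$ corresponds to a function whose restriction to each $\Tgi$ is a linear affine polynomial (an element of $\Po$), the push-forward $\vhp|_{\Tgi}$ is a polynomial of fixed degree. Introduce the affine map $F_{\Tgi}:\hat{T}\to\Tgi$ from a fixed reference triangle $\hat T$ onto $\Tgi$, with $\|DF_{\Tgi}\|\preceq h$ and $\|DF_{\Tgi}^{-1}\|\preceq h^{-1}$ by the almost-uniformity assumption of Definition \ref{def:aluniform}. Pulling $\vhp$ back to $\hat v=\vhp\circ F_{\Tgi}$ on $\hat T$, the chain rule and the change-of-variables formula give the scaling relations
\[
|\vhp|_{W^{m,p}(\Tgi)}\preceq h^{-m}\,|\det DF_{\Tgi}|^{1/p}\,|\hat v|_{W^{m,p}(\hat T)},
\]
and analogously for the $W^{l,q}$ seminorm, with $|\det DF_{\Tgi}|\preceq h^{2}$. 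On the fixed reference triangle $\hat T$ all norms on the finite-dimensional space of polynomials of bounded degree are equivalent, so $|\hat v|_{W^{m,p}(\hat T)}\preceq \|\hat v\|_{W^{l,q}(\hat T)}$ with an $h$-independent constant. Combining the two scalings, the powers of $h$ from the seminorm orders ($h^{-m}$ against $h^{l}$) and from the two Jacobian factors ($h^{2/p}$ against $h^{-2/q}$) assemble to precisely $h^{\,l-m-2(1/q-1/p)}$.

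The main obstacle is controlling the geometric distortion introduced by replacing the genuinely curved geodesic triangle $\Tgsi\subset\S$ by its planar counterpart $\Tgi$, and ensuring that this replacement truly contributes only $h$-independent constants rather than hidden powers of $h$. This is exactly where Propositions \ref{prp:duextension} and \ref{prp:equivnormext} do the work: they quantify that the extension/lift operation distorts $L^{p}$ norms, first tangential derivatives, and second derivatives only by bounded factors on the shell $\Oh$, whose thickness is $\Or(h^{2})$. Care is needed for the higher seminorms, since differentiating $\vh=\vhp\circ\Prji$ twice generates lower-order terms through the curvature of $\Prj$ (visible in the second identity of Proposition \ref{prp:duextension}); one checks these lower-order contributions are dominated by the leading term and absorbed into the constant. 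Once the norm equivalence is established with $h$-independent constants, the remaining affine scaling and reference-element polynomial equivalence are entirely routine and deliver the stated exponent of $h$.
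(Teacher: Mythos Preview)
Your approach is correct and is essentially the one the paper indicates: the paper does not give a detailed proof but simply remarks that the inverse estimate carries over to $\S$ via Propositions \ref{prp:duextension} and \ref{prp:equivnormext}, citing \cite[Proposition 2.7]{demlow2009higher} and \cite[Lemma 3.4]{kovacs2018maximum}. Your sketch---lift to the planar triangle with $h$-independent distortion, then run the standard affine scaling and reference-element norm-equivalence argument on $\Po$---is exactly what those references do.
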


We establish the following auxiliary result for later use in the analysis.

\begin{lem}
\label{lem:edgeIntp}
Let $\Tgsi\in\Ths$ be a geodesic triangle of an almost uniform Voronoï-Delaunay decomposition on $\S$ and $\tij\subset\partial\Tgsi$. Then, for $1\leq q<\infty$ and $\vh\in\U$, there exists a positive constant $C$ independent of $h$, such that
\begin{subequations}
\begin{align}
\int_{\tij}[\vh(\x)-\I (\vh)(\x)]d\gamma(\x)&=0,\label{eq:taus}\\
\|\vh-\I(\vh)\|_{\LqT}&\leq Ch\|\vh\|_{\WoqT}.\label{eq:estimIntp}
\end{align}
\end{subequations}
\end{lem}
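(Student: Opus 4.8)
The plan is to exploit the defining property of the transference operator $\I$, namely that $\I(\vh)$ equals $\vh$ at every vertex of the Voronoï--Delaunay grid, so that along any Delaunay edge $\tij$ the difference $\vh-\I(\vh)$ is the gap between an affine (after lifting, nearly affine) function and its piecewise-constant nodal interpolant. For the first identity \eqref{eq:taus}, I would work on a single edge $\tij$ joining $\xip$ and $\xjp$. The key geometric fact, recorded in the grid construction above, is that the Voronoï edge $\gij$ bisects the Delaunay edge $\tij$ at its midpoint $\xij$, so $\tij$ is split into two halves, one lying in $\cl(\Vi)$ and one in $\cl(\Vj)$, of equal length. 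On the half inside $\Vi$ the function $\I(\vh)$ takes the constant value $\vh(\xip)$, and on the half inside $\Vj$ it takes $\vh(\xjp)$. Thus
\[
\int_{\tij}\I(\vh)\,d\gamma=\tfrac{1}{2}\l(\tij)\bigl(\vh(\xip)+\vh(\xjp)\bigr).
\]
Since $\vh$ lifted from a $\mathbb{P}_1$ function is (to the relevant order) affine along the geodesic $\tij$, its integral equals the midpoint/endpoint-average value $\tfrac{1}{2}\l(\tij)(\vh(\xip)+\vh(\xjp))$ as well, and the two integrals cancel. The one subtlety is that $\vh\in\U$ is the \emph{lift} of an affine function under $\Prji$, so it is only affine up to a geometric distortion of order $h^2$; I would use Proposition~\ref{prp:duextension} and Proposition~\ref{prp:equivnormext} to control this, observing that the leading affine part integrates exactly as above and the correction is symmetric about $\xij$, so \eqref{eq:taus} holds exactly by the bisection symmetry rather than only approximately.

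For the norm estimate \eqref{eq:estimIntp}, the strategy is a standard scaling/Bramble--Hilbert argument transplanted to the geodesic triangle. On each triangle $\Tgsi$ the difference $\vh-\I(\vh)$ vanishes in an averaged sense by the zeroth-order consistency of the piecewise-constant interpolant: $\I(\vh)$ reproduces constants, since if $\vh$ is constant then $\vh(\xip)=\vh(\xjp)=\cdots$ and $\I(\vh)=\vh$. Hence the operator $\vh\mapsto \vh-\I(\vh)$ annihilates constants, and a Bramble--Hilbert / Poincaré-type inequality on the reference cell gives $\|\vh-\I(\vh)\|_{\LqT}\preceq h\,\|\grads\vh\|_{\LqT}\le Ch\|\vh\|_{\WoqT}$. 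To make this rigorous I would map $\Tgsi$ to the flat triangle $\Tgi$ via $\Prj$, use the norm equivalences of Proposition~\ref{prp:equivnormext} to pass between $\grads$ on $\S$ and $\grad$ on $\Sh$ with $h$-independent constants, and then apply the planar estimate on $\Tgi$, where $\vh^{\Omega}$ is genuinely affine and the almost-uniformity (Definition~\ref{def:aluniform}) supplies the shape-regularity needed for the scaling constant to be uniform in $h$.

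The main obstacle I anticipate is \eqref{eq:taus}, precisely because it is claimed as an \emph{exact} identity rather than an $O(h^2)$ estimate. On a flat triangle this is immediate from linearity plus midpoint bisection, but on the sphere $\vh$ is not truly affine along $\tij$, so I must argue that the deviation from affinity is exactly odd about the bisection point $\xij$ and therefore integrates to zero against the symmetric piecewise-constant profile of $\I(\vh)$. I would secure this by parametrizing $\tij$ by arclength centered at $\xij$ and checking that both $\vh$ (via the radial-projection formula of Proposition~\ref{prp:duextension}) and the characteristic-function splitting of $\I(\vh)$ are compatible with this symmetry; if the symmetry is not perfectly exact, I would instead interpret the identity as holding for the affine representative and absorb the geometric correction into the constant in \eqref{eq:estimIntp}, which is all the later error analysis actually requires.
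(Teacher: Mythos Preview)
Your plan for \eqref{eq:taus} is exactly the paper's: split $\tij$ at the geodesic midpoint $\xij$, so that $\int_{\tij}\I(\vh)\,d\gamma=\tfrac{1}{2}\l(\tij)(\vh(\xip)+\vh(\xjp))$, and then show $\int_{\tij}\vh\,d\gamma$ equals the same quantity by a symmetry argument using the radial extension $\Evh$. Your worry that the symmetry might only be approximate is unfounded: parametrize the straight chord by $\xp(t)=(1-t)\xip+t\xjp$, note that $\Evh(\xp(t))$ is affine in $t$ while the arclength Jacobian $\|(\xp/\|\xp\|)'(t)\|$ is symmetric under $t\leftrightarrow 1-t$ (since $\|\xip\|=\|\xjp\|=1$); pairing $t$ with $1-t$ gives $\int_{\tij}\vh\,d\gamma=\tfrac{1}{2}\l(\tij)(\vh(\xip)+\vh(\xjp))$ exactly. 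This matters, because your fallback of ``absorbing the geometric correction into \eqref{eq:estimIntp}'' would not suffice: the exact vanishing is used later (Lemma~\ref{lem:AAhs}) to kill a boundary term.

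For \eqref{eq:estimIntp} your Bramble--Hilbert/Poincar\'e route is valid but differs from the paper, which argues more directly. The paper partitions $\Tgsi$ into the three pieces $\Qn=\Tgsi\cap\Vn$, $n\in\{i,j,k\}$; on $\Qi$ one has $\I(\vh)=\vh(\xip)$, and a first-order Taylor bound gives $|\vh(\x)-\vh(\xip)|\le \|\x-\xip\|\max\|\nabla\Evh\|\le Ch\,\|\nabla\Evh\|_{L^\infty(\Qi)}$. Integrating and invoking the inverse estimate (Lemma~\ref{lem:propinverse}) to pass from $L^\infty$ back to $L^q$ yields $\|\vh-\I(\vh)\|_{L^q(\Qi)}\le Ch\|\grads\vh\|_{L^q(\Qi)}$. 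Your scaling argument reaches the same conclusion via the observation that $\I$ reproduces constants; the paper's version is more hands-on and makes the role of the inverse inequality explicit, while yours is cleaner but hides that dependence.
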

\begin{proof}
Let $\xij$ be the midpoint of $\tij\subset \partial\Tgsi$, we define $\tij^{(n)}$ as the geodesic between the points $\xij$ and $\xn$ with $n\in\{i,j\}$. We know that $\I(\vh)(\x)=\vh(\xn)$ for each $\x\in\tij^{(n)}$, where $n\in\{i,j\}$ and $\tij=\tij^{(i)}\cup\tij^{(j)}$. We can immediately derive the following estimates,
\begin{align}
\int_{\tij}[\vh(\x)-\I(\vh)(\x)]d\gamma(\x)&=\int_{\tij^{(i)}}[\vh(\x)- \vh(\xip)]d\gamma(\x)+\int_{\tij^{(j)}}[\vh(\x)-\vh(\xjp)]d\gamma(\x)\nonumber\\
&=\int_{\tij}\vh(\x)d\gamma(\x)-\int_{\tij^{(i)}}[\vh(\xip)+\vh(\xjp)]d\gamma(\x)\nonumber\\
&=\int_{\tij}\vh(\x)d\gamma(\x)-\frac{1}{2}\l(\tij)[\vh(\xip)+\vh(\xjp)],\label{eq:lemtri01}
\end{align}
where $\l(\tij)$ denotes the length of $\tij$. Let $\Evh$ be the lift of $\vh$ to the spherical shell $\Oijk:=\left\{\xp\in\Oh:\Prj(\xp)\in\Tgsi\right\}$, with $\vh(\x)=\Evh(\x)$ for all $\x\in\S$. Also let $\xpij$ be the midpoint of $[\x_{i},\x_{j}]$ (the segment goes from $\xip$ to $\xjp$) such that $\xij=\Prj(\xpij)$. Now, assume that $\Evh\in C^{1}(\Oijk)$, by Taylor's Theorem for $\Evh$ around $\xij$ and integrating over $\tij$, we obtain
\begin{align*}
\int_{\tij}\vh(\x)d\gamma(\x)&=\int_{\tij}\Evh(\x)d\gamma(\x)=\l(\tij)\Evh(\xij)\\
&\quad +\int_{\tij}\int_{0}^{1}\nabla \Evh(\xij+t(\x-\xij))\cdot(\x-\xij)dtd\gamma(\x).
\end{align*}
the second term in the right-hand side vanishes by parity of $\nabla\Evh(\y)$ for any $\y\in[\x,\xij]$ and a symmetry of $\tij$ with respect to the midpoint $\xij$. It follows that,
\[
\int_{\tij}\vh(\x)d\gamma(\x)=\l(\tij)\Evh(\xij).
\]
Substituting the expression above into \eqref{eq:lemtri01}, one obtains
\[
\int_{\tij}[\vh(\x)-\I(\vh)(\x)]d\gamma(\x)=\l(\tij)\Evh(\xij)-\frac{1}{2}\l(\tij)[\vh(\xip)+\vh(\xjp)]=0,
\]
which follows from $\Evh(\xij)=\Evh(\xpij)$ and $\Evh(\xpij)=\tfrac{1}{2}[\vh(\x_{i})+v_{h}(\x_{j})]$. This shows the identity \eqref{eq:taus}. For \eqref{eq:estimIntp}, we consider $\Qi,\Qj$ and $\Qk$ the three spherical polygonal regions makeup by the intersection of the triangle $\Tgsi$ with the Voronoï~cells associated to each vertex $\xip,\xjp$ and $\xkp$, \ie, 
\[
\Qn=\Tgsi\cap\Vn,\quad\mbox{for }n\in\{i,j,k\}.
\]
For $\x\in\Qi$ we have $\I(\vh)(\x)=\vh(\xip)$. Then
\begin{align*}
\left|\vh(\x)-\vh(\xip)\right|&=\left|\int_{0}^{1}\nabla \Evh(\xip+t(\x-\xip))\cdot(\x-\xip)dt\right|\\&\leq \|\x-\xip\|\max_{\xp\in[\x,\xip]}\left\|\nabla\Evh(\xp)\right\|,
\end{align*}
where $\Evh$ is the radial extension of $\vh$ to the spherical shell $\Oh$, we used its Taylor expansion and $[\x,\xip]$ is a segment that connects $\x$ with $\xip$. For $1\leq q< \infty$ and, by integrating over $\Qi$, we get
\begin{align*}
\int_{\Qi}\left|\vh(\x)-\I(\vh)(\x)\right|^{q}ds(\x)&\leq \int_{\Qi}\|\x-\xip\|^{q}\max_{\xp\in[\x,\xip]}\left\|\nabla\Evh(\xp)\right\|^{q}ds(\x)\\
&\leq Ch^{q}\int_{\Qi}\max_{\y\in\Qi}\left\|\nabla\Evh(\y)\right\|^{q}ds(\x).
\end{align*}
Recalling Definition \ref{def:aluniform}  and Lemma \ref{lem:propinverse} with $m=l=1$ and $p=\infty$, we can have
\begin{align*}
\|\vh-\I(\vh)\|_{L^{q}(\Qi)}^{q}&\leq Ch^{q}\a(\Qi)\|\nabla \Evh\|_{L^{\infty}(\Qi)}^{q}\leq Ch^{q}h^{2}\|\nabla \Evh\|_{L^{\infty}(\Qi)}^{q}\\
&\leq Ch^{q}h^{2}h^{-2}\|\nabla \Evh\|^{q}_{L^{q}(\Qi)}\leq Ch^{q}\|\nabla \Evh\|^{q}_{L^{q}(\Qi)}.
\end{align*}
The estimates similarly hold for $\Qj$ and $\Qk$. Combining these results, we get that 

\[
\|\vh-\I(\vh)\|_{L^{q}(\Tgsi)}\leq Ch\|\grads\vh\|_{L^{q}(\Tgsi)},
\]
which completes the proof.
\end{proof}

Let us now introduce some discrete norms and seminorms for functions on $\U$. Similarly to \cite{du2003voronoi,droniou2014introduction,bessemoulin2015discrete}, for $1\leq p <\infty$, we denote
\[
\begin{split}
\|\uh\|_{0,p,h}^{p}&=\sum_{i=1}^{N}\a(\Vi)|\uh(\xip)|^{p},\\%
|\uh|_{1,p,h}^{p} & =\sum_{i=1}^{N}\sum_{j\in\LiN}\frac{1}{2}\l(\gij)\d(\xip,\xjp)\left|\frac{\uh(\xip)-\uh(\xjp)}{\|\xip-\xjp\|}\right|^{p},\\
\|\uh\|_{1,p,h}^{p} & =\|\uh\|_{0,p,h}^{p}+|\uh|_{1,p,h}^{p}.
\end{split}
\]
In the case $p=2$, we omit $p$ in our notation and simply write $\|\cdot\|_{0,2,h}=\|\cdot\|_{0,h}$ and $\|\cdot\|_{1,2,h}=\|\cdot\|_{1,h}$ for the norms and $|\cdot|_{1,2,h}=|\cdot|_{1,h}$ for the seminorm. Furthermore, for the case $p=\infty$, we can use the usual notational convention for the $\max$-norm, \ie, $\|\cdot\|_{\Li}$, for functions on $\U$.

\begin{prp}
\label{prp:equivnorm}
For $\uh\in\U$, there exist positive constants $C_{0}$ and $C_{1}$, independent of $h$ such that, 
\begin{subequations}
\begin{align}
\frac{1}{C_{0}}\|\uh\|_{0,p,h}&\leq\|\uh\|_{\Lp}\leq C_{0}\|\uh\|_{0,p,h},\label{eq:mequiv01}\\
\frac{1}{C_{1}}|\uh|_{1,p,h}&\leq \|\grads\uh\|_{\Lp}\leq C_{1}|\uh|_{1,p,h},\label{eq:mequiv02}
\end{align}
\end{subequations}
with $p\in\{1,2\}$.
\end{prp}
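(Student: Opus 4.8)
The plan is to reduce both equivalences to local, element-by-element statements on the flat triangles of $\Th$, where the representative $\uhp$ of $\uh$ is genuinely affine, and then to reinsert the geometric weights using almost-uniformity (Definition \ref{def:aluniform}). The bridge between the curved quantities on $\S$ and the flat ones on $\Sh$ is Proposition \ref{prp:equivnormext}, applied element-wise: for $h$ small it gives $\|\uh\|_{\LpT}\simeq\|\uhp\|_{L^{p}(\Tgi)}$ and $\|\grads\uh\|_{\LpT}\simeq\|\grad\uhp\|_{L^{p}(\Tgi)}$ with $h$-independent constants (here $\uhp$ is the extension of $\uh$ restricted to $\Sh$, which is affine on each $\Tgi$ and agrees with $\uh$ at the vertices, by Proposition \ref{prp:duextension}). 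Summing over $ijk\in\Sigma$, it therefore suffices to work with $\uhp$ on each flat triangle and then compare with the discrete quantities.

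For the $L^{p}$ equivalence \eqref{eq:mequiv01}, fix a triangle and observe that $\uhp|_{\Tgi}$ belongs to the three-dimensional space of affine functions. On a fixed reference triangle $\widehat{T}$ the norms $v\mapsto\|v\|_{L^{p}(\widehat{T})}$ and $v\mapsto(\sum_{n}|v(\widehat{a}_{n})|^{p})^{1/p}$ (sum over the three vertices $\widehat{a}_{n}$) are equivalent, being norms on a finite-dimensional space; pulling back under the affine map $\widehat{T}\to\Tgi$ and using that vertex values are preserved yields
\[
c\,|\Tgi|\sum_{n\in\{i,j,k\}}|\uh(\xnp)|^{p}\leq\|\uhp\|_{L^{p}(\Tgi)}^{p}\leq C\,|\Tgi|\sum_{n\in\{i,j,k\}}|\uh(\xnp)|^{p},
\]
with $c,C$ independent of $h$. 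Summing over $ijk\in\Sigma$, using that each vertex lies in a uniformly bounded number of triangles and that $|\Tgi|\simeq h^{2}\simeq\a(\Vi)$ by Definition \ref{def:aluniform}, the outer bounds collapse to $\sum_{i}\a(\Vi)|\uh(\xip)|^{p}=\|\uh\|_{0,p,h}^{p}$ up to $h$-independent constants; combining with Proposition \ref{prp:equivnormext} gives \eqref{eq:mequiv01}.

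For the gradient equivalence \eqref{eq:mequiv02}, the same reduction gives $\|\grads\uh\|_{\Lp}^{p}\simeq\sum_{ijk}\|\grad\uhp\|_{L^{p}(\Tgi)}^{p}$, and since $\uhp$ is affine its gradient is a constant vector $g_{ijk}$ on $\Tgi$, so $\|\grad\uhp\|_{L^{p}(\Tgi)}^{p}=|\Tgi|\,\|g_{ijk}\|^{p}$. The crucial local fact is that for each Delaunay edge $\tij\subset\partial\Tgi$ the difference quotient $(\uh(\xjp)-\uh(\xip))/\|\xjp-\xip\|$ equals the component of $g_{ijk}$ along the chord direction $\overrightarrow{\xip\xjp}$, and since two of the three edges span the plane of $\Tgi$,
\[
\|g_{ijk}\|^{p}\simeq\sum_{\tij\subset\partial\Tgi}\left|\frac{\uh(\xjp)-\uh(\xip)}{\|\xjp-\xip\|}\right|^{p},
\]
with constants depending only on the minimal angle, which the Delaunay criterion together with almost-uniformity keeps bounded away from zero. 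Multiplying by $|\Tgi|\simeq h^{2}\simeq\tfrac{1}{2}\l(\gij)\d(\xip,\xjp)$ (all lengths being $\simeq h$ by Definition \ref{def:aluniform}) and summing over triangles, each edge being shared by exactly two triangles so that the per-edge terms assemble into $\sum_{i}\sum_{j\in\LiN}$ with the factor $\tfrac{1}{2}$, reproduces $|\uh|_{1,p,h}^{p}$, which gives \eqref{eq:mequiv02}.

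The main obstacle is not any single inequality but the careful bookkeeping of the lift and of the mixed geometry: $\uh$ is the radial lift of the affine $\uhp$, so $\grads\uh$ is not literally constant on $\Tgsi$, and the weights $\l(\gij)$, $\d(\xip,\xjp)$, $\a(\Vi)$ blend Voronoï (dual) with Delaunay (primal) data and geodesic with chordal lengths. Proposition \ref{prp:equivnormext} absorbs the curved-versus-flat gradient discrepancy into $h$-independent constants, almost-uniformity guarantees that all these lengths are comparable to $h$ and that the vertex and edge multiplicities are uniformly bounded, and the shape-regularity inherited from the Delaunay property is exactly what keeps the local gradient-to-edge-difference equivalence uniform as $h\to0$. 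Once these comparabilities are in place the two estimates follow by assembling the element-local bounds, and the restriction $p\in\{1,2\}$ is immaterial to the argument, reflecting only what the subsequent analysis requires.
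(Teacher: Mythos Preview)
Your argument is correct and follows the standard finite-element route: transfer to the flat triangulation via Proposition~\ref{prp:equivnormext}, invoke a reference-element norm equivalence on the three-dimensional affine space, scale back, and use almost-uniformity to match the geometric weights. The paper proceeds differently. For $p=2$ it simply cites \cite{du2005finite,li2000generalized}. For $p=1$ it argues by an explicit quadrature: on each flat triangle $\Tgi$ it subdivides into the three sub-triangles $\Si,\Sj,\Sk$ obtained by joining the circumcenter $\q_{ijk}$ to the vertices (Figure~\ref{fig:trianref}), integrates $|\uhp|$ via a midpoint-type rule giving $\a(\Sk)|\uhp(\xpij)|+\a(\Si)|\uhp(\xpjk)|+\a(\Sj)|\uhp(\xpik)|$, expands the midpoint values as averages of vertex values, and then uses the exact area identities $\tfrac{\a(\Sk)+\a(\Sj)}{2}=\a(\Qip)$ (and cyclic) linking these sub-triangles to the Vorono\"i pieces $\Qnp=\Prji(\Vn\cap\Tgsi)$.

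Your approach is more robust: it works for any $1\leq p<\infty$ with no change (as you note), and it avoids relying on the specific circumcenter-based area identities, which are the one genuinely Vorono\"i-specific ingredient in the paper's calculation. The paper's approach, on the other hand, makes the constants almost explicit and shows precisely how the dual-cell areas $\a(\Vi)$ arise from the primal geometry rather than merely through the crude comparison $\a(\Vi)\simeq h^{2}\simeq|\Tgi|$. One small caveat in your write-up: the minimal-angle bound and the comparability $\d(\xip,\xjp)\simeq h$ are properties of the Delaunay triangles, whereas Definition~\ref{def:aluniform} is stated for the Vorono\"i side; the paper uses this implication implicitly throughout, so your reliance on it is in keeping with the rest of the text, but it is worth being aware that this is where the duality is doing work.
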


\begin{proof}
About $p=2$, we cite \cite[Proposition 4, pp.~1678]{du2005finite} and \cite[Lemma 3.2.1, pp.~124]{li2000generalized}. About $p=1$, from Proposition \ref{prp:equivnormext}, for the extension $\Euhp$ of $\uh$ to $\Oh$ restricted to $\Sh$, we have 
\begin{equation}
\label{eq:equiv01}
\int_{\Tgsi}|\uh(\x)|ds(\x)\leq C\int_{\Tgi}|\Euhp(\xp)|ds(\xp).
\end{equation}

\begin{figure}[!h]
\begin{center}
\begin{tikzpicture}[scale=0.7,
x=1cm,y=1cm]
\clip(-1.0,-0.5) rectangle (8.0,7.0);
\fill[line width=1.6pt,color=DarkBlue,fill=Blue,fill opacity=0.01] (0,0) -- (4.126324716897398,5.772286864654252) -- (7.02,0.38) -- cycle;
\draw [line width=1.2pt,color=DarkBlue] (0,0)-- (4.126324716897398,5.772286864654252);
\draw [line width=1.2pt,color=DarkBlue] (4.126324716897398,5.772286864654252)-- (7.02,0.38);
\draw [line width=1.2pt,color=DarkBlue] (7.02,0.38)-- (0,0);
\draw [line width=1.2pt,color=BrickRed] (3.416420081044503,1.9187658712304956)-- (2.063162358448699,2.886143432327126);
\draw [line width=1.2pt,color=BrickRed] (3.416420081044503,1.9187658712304956)-- (5.573162358448698,3.0761434323271257);
\draw [line width=1.2pt,color=BrickRed] (3.416420081044503,1.9187658712304956)-- (3.51,0.19);
\draw [line width=1.2pt,dashed,color=Gray] (0,0)-- (3.416420081044503,1.9187658712304956);
\draw [line width=1.2pt,dashed,color=Gray] (7.02,0.36)--(3.416420081044503,1.9187658712304956);
\draw [line width=1.2pt,dashed,color=Gray] (4.126324716897398,5.772286864654252)--(3.416420081044503,1.9187658712304956);
\draw [color=BrickRed](3.71,4.0) node[anchor=north west] {$\Qkp$};
\draw [color=BrickRed](1.79,1.2) node[anchor=north west] {$\Qip$};
\draw [color=BrickRed](4.65,1.85) node[anchor=north west] {$\Qjp$};
\draw [color=Gray](4.5,3.6) node[anchor=north west] {$\Si$};
\draw [color=Gray](1.79,2.7) node[anchor=north west] {$\Sj$};
\draw [color=Gray](3.65,1.2) node[anchor=north west] {$\Sk$};
\draw [fill=black] (0,0) circle (2pt);
\draw[color=black] (-0.24,-0.17) node {$\xip$};
\draw [fill=black] (4.126324716897398,5.772286864654252) circle (2pt);
\draw[color=black] (4.11,6.13) node {$\xkp$};
\draw [fill=black] (7.02,0.36) circle (2pt);
\draw[color=black] (7.4,0.31) node {$\xjp$};
\draw [fill=black] (2.063162358448699,2.886143432327126) ++(-1.5pt,0 pt) -- ++(1.5pt,1.5pt)--++(1.5pt,-1.5pt)--++(-1.5pt,-1.5pt)--++(-1.5pt,1.5pt);
\draw[color=black] (1.78,3.20) node {$\xp_{ik}$};
\draw [fill=black] (5.573162358448698,3.0761434323271257) ++(-1.5pt,0 pt) -- ++(1.5pt,1.5pt)--++(1.5pt,-1.5pt)--++(-1.5pt,-1.5pt)--++(-1.5pt,1.5pt);
\draw[color=black] (5.86,3.34) node {$\xp_{jk}$};
\draw [fill=black] (3.51,0.19) ++(-1.5pt,0 pt) -- ++(1.5pt,1.5pt)--++(1.5pt,-1.5pt)--++(-1.5pt,-1.5pt)--++(-1.5pt,1.5pt);
\draw[color=black] (3.64,-0.16) node {$\xp_{ij}$};
\draw [fill=BrickRed] (3.416420081044503,1.9187658712304956) circle (2pt);
\draw[color=black] (4.2,1.9) node {$\q_{ijk}$};
\end{tikzpicture}
\end{center}
\caption{\label{fig:trianref} Geometric configuration of  planar triangle $\Tgi\in\Th$ with vertices $\xip,\xjp$ and $\xkp$ and its circumcenter denoted by $\q_{ijk}$. }
\end{figure}
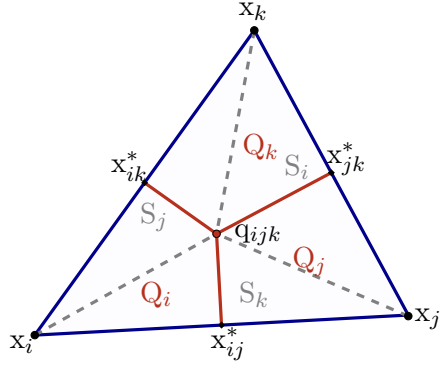

Note that $\Euhp$ is linear in planar triangle $\Tgi\in\Th$ and assume that $\a(\Tgi)=\sum_{n\in\{i,j,k\}}\a(\Sn)$, where $\Sn$ denotes the triangular regions in $\Tgi$, shown enclosed by dashed lines in Figure \ref{fig:trianref}. Then, by numerical integration formula with second-order accuracy, we compute that
\[
\sum_{n\in\{i,j,k\}}\int_{\Sn}|\Euhp(\xp)|ds(\xp)=\a(\Sk)|\uhp(\xpij)|+\a(\Si)|\uhp(\xpjk)|+\a(\Sj)|\uhp(\xpik)|,
\]
where $\xpij,\xpjk$ and $\xpik$ represent the midpoints of each edge of $\Tgi$. Then, we have
\begin{align}
\sum_{n\in\{i,j,k\}}\int_{\Sn}|\Euhp(\xp)|&ds(\xp)=\a(\Sk)\left|\frac{\uhp(\xip)+\uhp(\xjp)}{2}\right|+\a(\Si)\left|\frac{\uhp(\xjp)+\uhp(\xkp)}{2}\right|\nonumber\\
&\quad+\a(\Sj)\left|\frac{\uhp(\xip)+\uhp(\xkp)}{2}\right|\nonumber\\
&\leq \frac{\a(\Sk)+\a(\Sj)}{2}\left|\uhp(\xip)\right|+\frac{\a(\Sk)+\a(\Si)}{2}\left|\uhp(\xjp)\right|\nonumber\\
&\quad+\frac{\a(\Si)+\a(\Sj)}{2}\left|\uhp(\xkp)\right|\label{eq:equiv02}.
\end{align}

Notice that 
\[
\tfrac{\a(\Sk)+\a(\Sj)}{2}=\a(\Qip),\, \tfrac{\a(\Sk)+\a(\Si)}{2}=\a(\Qjp)\mbox{ and }\tfrac{\a(\Si)+\a(\Sj)}{2}=\a(\Qkp).
\]
In fact, we have $\Qn=\Prj(\Qnp)$ for $n\in\{i,j,k\}$. Thus, gathering \eqref{eq:equiv01} and \eqref{eq:equiv02}, and summing up all triangles of $\Ths$, we obtain
\[
\|\uh\|_{\Lo}=\sum_{\Tgsi\in\Ths}\int_{\Tgsi}|\uh(\x)|ds(\x)\leq C\sum_{i=1}^{N}\a(\Vi)|\uh(\xip)|=C\|\uh\|_{0,1,h},
\]
which yields the right-hand side of \eqref{eq:mequiv01}. Similarly, we can obtain the left-hand side. The inequality \eqref{eq:mequiv02} follows by using the fact that $\uhp$ is linear on each $\Tgi\in\Th$. Furthermore, $\nabla\uhp$ is constant on each $\Tgi\in\Th$; then the result is given by using the numerical integration and central difference approximation with the second order of accuracy. 
\end{proof}

\subsection{A variational formulation}
We now describe a variational formulation for the finite volume scheme. For $(\uh,\vh)\in\U\times\U$, we define the total flux bilinear form $\Ahs:\U\times\Vhs\to\R$ such that
\begin{equation}
\label{eq:biformFV}
\Ahs(\uh,\I(\vh))=\sum_{i=1}^{N}\vh(\xip)\sum_{j\in\LiN}\Fcij(\uh),
\end{equation}
and its discrete version
\begin{equation}
\label{eq:biformFVd}
\Ahd(\uh,\I(\vh))=\sum_{i=1}^{N}\vh(\xip)\sum_{j\in\LiN}\Fdij(\uh),
\end{equation}
where $\Fcij$ and $\Fdij$ are defined in \eqref{eq:cflux} and \eqref{eq:dflux} respectively. So, an approximation $\uh\in\U$ of \eqref{eq:FVscheme} is defined as the unique solution of the discrete problem: find $\uh\in\U$ such that
\begin{equation}
\label{eq:weakformv}
\Ahd(\uh,\I(\vh))=(f,\I(\vh)),\quad\mbox{for each }\vh\in\U,
\end{equation}
In other words, we have, in each Voronoï volume, that $\Ahd(\uh,\chi_{i}) = f_{i}$, where the values of $f_{i}$ are defined in \eqref{eq:fi}.

\begin{prp}[\cite{mishev1998finite,du2003voronoi}]
\label{prop:well-v}
Let $\VGs=\{\xip,\Vi\}_{i=1}^{N}$ be an almost uniform  Voronoï-Delaunay decomposition on $\S$. Consider $\Fdij$ as the discrete flux defined in \eqref{eq:dflux}. Then, for the solution  $\uh\in\U$ of problem \eqref{eq:weakformv}, there are positive constants $C_0$ and $C_1$ such that
\[
\begin{split}
\Ahd(\uh,\I(\vh))&\leq C_{0}|\uh|_{1,h}|\vh|_{1,h},\quad\mbox{for each }\vh\in\U,\\
\Ahd(\uh,\I(\uh))&\geq C_{1}|\uh|_{1,h}^{2}.
\end{split}
\]
Here $|\cdot|_{1,h}$ denotes the discrete seminorm with $p=2$.
\end{prp}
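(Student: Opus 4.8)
The plan is to reduce the statement to a discrete summation-by-parts identity that rewrites $\Ahd$ as a symmetric sum over the Voronoï edges, after which both the continuity and coercivity bounds follow from the equivalence of chordal and geodesic distances together with a discrete Cauchy--Schwarz inequality. First I would substitute the definition \eqref{eq:dflux} of the discrete flux into \eqref{eq:biformFVd}, obtaining
\[
\Ahd(\uh,\I(\vh)) = -\sum_{i=1}^{N}\vh(\xip)\sum_{j\in\LiN}\l(\gij)\frac{\uh(\xjp)-\uh(\xip)}{\|\xjp-\xip\|}.
\]
The crucial structural fact is that the summand is antisymmetric under the swap $i\leftrightarrow j$: the length $\l(\gij)$ is symmetric because $\gij=\cl(\Vi)\cap\cl(\Vj)$ is the common edge, the neighbour relation is symmetric ($j\in\LiN$ if and only if $i\in\Lambda(j)$), while the difference quotient changes sign. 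Relabelling the indices and averaging the two resulting expressions (the discrete analogue of integration by parts) yields the symmetric edge form
\[
\Ahd(\uh,\I(\vh)) = \frac{1}{2}\sum_{i=1}^{N}\sum_{j\in\LiN}\frac{\l(\gij)}{\|\xip-\xjp\|}\bigl(\uh(\xip)-\uh(\xjp)\bigr)\bigl(\vh(\xip)-\vh(\xjp)\bigr),
\]
which is now manifestly symmetric in $\uh$ and $\vh$ and expressed entirely through the discrete differences that enter the seminorm $|\cdot|_{1,h}$.

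Next I would record that, for neighbouring vertices, the chordal length $\|\xip-\xjp\|=2\sin(\d(\xip,\xjp)/2)$ and the geodesic length $\d(\xip,\xjp)$ are comparable, that is $\|\xip-\xjp\|\asymp\d(\xip,\xjp)$ with constants independent of $h$ (the ratio tending to $1$ as $h\to 0$). Coercivity then follows by taking $\vh=\uh$: each edge term becomes $\tfrac{\l(\gij)}{\|\xip-\xjp\|}(\uh(\xip)-\uh(\xjp))^{2}$, and rewriting $1/\|\xip-\xjp\|$ as $\d(\xip,\xjp)/\|\xip-\xjp\|^{2}$ times the bounded ratio $\|\xip-\xjp\|/\d(\xip,\xjp)$ matches precisely the weight in $|\uh|_{1,h}^{2}$, giving $\Ahd(\uh,\I(\uh))\geq C_{1}|\uh|_{1,h}^{2}$. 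For continuity I would factor each edge contribution as $\sqrt{\l(\gij)\d(\xip,\xjp)}\,|D_{ij}\uh|$ times the analogous factor built from $\vh$, where $D_{ij}w=(w(\xip)-w(\xjp))/\|\xip-\xjp\|$, absorb the bounded ratio $\|\xip-\xjp\|/\d(\xip,\xjp)$ into the constant, and apply the discrete Cauchy--Schwarz inequality to split the edge sum into the product $|\uh|_{1,h}|\vh|_{1,h}$, yielding the first bound with $C_{0}$ independent of $h$.

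The only genuinely delicate point is the bookkeeping in the summation-by-parts step: one must verify that the index set $\{(i,j):j\in\LiN\}$ is invariant under the swap $i\leftrightarrow j$ and that each geodesic Voronoï edge is accounted for consistently, so that the antisymmetric cancellation produces exactly the symmetric edge form with the factor $1/2$ aligned with the seminorm weight. Everything else is the elementary chord--arc comparison and the discrete Cauchy--Schwarz inequality, so I expect this indexing argument to be the main, albeit mild, obstacle.
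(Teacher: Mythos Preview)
Your argument is correct and is precisely the standard route: the summation-by-parts identity you derive is exactly the symmetric edge form underlying the discrete seminorm, and the chord--arc comparison together with Cauchy--Schwarz gives both bounds. The paper itself does not supply a proof for this proposition but simply cites \cite{mishev1998finite,du2003voronoi}; the argument in those references is essentially the one you wrote down, so there is nothing to contrast.
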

The following result establishes an estimate of the stability of scheme \eqref{eq:FVscheme}, which is an immediate consequence of the proposition above.

\begin{prp}
\label{prp:estabilityv}
Let $f\in \Lt$ satisfying the compatibility condition \eqref{eq:compf}. A unique approximate solution $\uh\in\U$ of the discrete problem \eqref{eq:weakformv} satisfies,
\begin{equation}
\label{eq:stabilityv}
|\uh|_{1,h}\leq C\|f\|_{\Lt},
\end{equation}
where $C$ is a positive constant independent of parameter $h$.
\end{prp}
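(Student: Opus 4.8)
The plan is to run the standard energy argument: test the discrete equation against the solution itself and invoke the coercivity of $\Ahd$. Concretely, I would set $\vh=\uh$ in the weak formulation \eqref{eq:weakformv}, so that the coercivity bound from Proposition \ref{prop:well-v} yields
\[
C_{1}|\uh|_{1,h}^{2} \leq \Ahd(\uh,\I(\uh)) = (f,\I(\uh)).
\]
It then remains to bound the linear functional on the right by $\|f\|_{\Lt}$ times a multiple of $|\uh|_{1,h}$, after which one power of the seminorm cancels.

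For the right-hand side I would use that $\I(\uh)$ is piecewise constant on the Voronoï cells, equal to $\uh(\xip)$ on $\Vi$, to rewrite $(f,\I(\uh)) = \sum_{i=1}^{N}\uh(\xip)\int_{\Vi}f\,ds = \sum_{i=1}^{N}\a(\Vi)f_{i}\,\uh(\xip)$ with $f_{i}$ as in \eqref{eq:fi}. A discrete Cauchy--Schwarz inequality then gives
\[
|(f,\I(\uh))| \leq \Big(\sum_{i=1}^{N}\a(\Vi)|f_{i}|^{2}\Big)^{1/2}\Big(\sum_{i=1}^{N}\a(\Vi)|\uh(\xip)|^{2}\Big)^{1/2} = \Big(\sum_{i=1}^{N}\a(\Vi)|f_{i}|^{2}\Big)^{1/2}\|\uh\|_{0,h}.
\]
Since $\a(\Vi)|f_{i}|^{2} = \a(\Vi)^{-1}\big|\int_{\Vi}f\,ds\big|^{2} \leq \int_{\Vi}|f|^{2}\,ds$ by Jensen's (equivalently Cauchy--Schwarz) inequality, summing over $i$ bounds the first factor by $\|f\|_{\Lt}$.

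To close the estimate I would convert $\|\uh\|_{0,h}$ back into the seminorm $|\uh|_{1,h}$ via a discrete Poincaré inequality. Here the norm equivalences of Proposition \ref{prp:equivnorm} give $\|\uh\|_{0,h}\preceq\|\uh\|_{\Lt}$ and $\|\grads\uh\|_{\Lt}\preceq|\uh|_{1,h}$; combined with the continuous Poincaré inequality $\|\uh\|_{\Lt}\leq C_{P}\|\grads\uh\|_{\Lt}$, valid because $\uh\in\U\subset\HqS$ has zero mean, this yields $\|\uh\|_{0,h}\preceq|\uh|_{1,h}$. Substituting into the chain above produces $C_{1}|\uh|_{1,h}^{2}\preceq\|f\|_{\Lt}\,|\uh|_{1,h}$, and dividing by $|\uh|_{1,h}$ (the case $|\uh|_{1,h}=0$ being trivial) gives \eqref{eq:stabilityv}.

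The step I expect to require the most care is the discrete Poincaré inequality: although it is ``immediate'' once the equivalences and the spectral-gap Poincaré estimate on $\S$ are in hand, it is the only place where the zero-average constraint built into $\U$ is genuinely used, and it is what prevents the constant from degenerating as $h\to 0$.
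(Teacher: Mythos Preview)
Your argument is correct and is precisely the standard energy argument that the paper has in mind: the paper gives no detailed proof, merely stating that \eqref{eq:stabilityv} is ``an immediate consequence'' of the coercivity in Proposition~\ref{prop:well-v}. Your explicit handling of the right-hand side via Cauchy--Schwarz/Jensen and the Poincar\'e step through Proposition~\ref{prp:equivnorm} and the zero-mean constraint in $\U$ simply spells out what ``immediate'' means here.
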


\subsection{Geometric error estimates}

In this subsection, we present two bounds concerning the geometric perturbation errors in the bilinear forms. We begin with the following lemma.

\begin{lem}
\label{lem:AhsAhd}
Let $\Ahs(\cdot,\cdot)$ and $\Ahd(\cdot,\cdot)$ be the total flux bilinear forms defined in \eqref{eq:biformFV} and \eqref{eq:biformFVd} respectively. Assume that $\uh\in\U$ is the unique solution to the discrete problem \eqref{eq:weakformv}. Then, for each $\vh\in\U$ and $p > 1$ with $1/p+1/q=1$, there exists a positive constant $C$, independent of $h$, such that
\[
\left|\Ahs(\uh,\I(\vh))-\Ahd(\uh,\I(\vh))\right|\leq Ch^{2}\|\grads \uh\|_{\Lp}\|\grads\vh\|_{\Lq}.
\]
\end{lem}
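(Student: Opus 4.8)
The plan is to reduce the doubly-indexed difference to a sum over undirected Voronoï edges, establish a per-edge second-order consistency estimate, and then reassemble using a discrete Hölder inequality together with the norm equivalences already available. First I would exploit antisymmetry. The discrete flux satisfies $\Fdij=-\Fdji$ (noted below \eqref{eq:FVscheme}), and the continuous flux is antisymmetric as well: reversing the roles of $\xip$ and $\xjp$ leaves the set $\gij=\cl(\Vi)\cap\cl(\Vj)$ unchanged but reverses the co-normal $\nxgi$, so $\widetilde{\mathcal{F}}_{ji}(\uh)=-\Fcij(\uh)$. Grouping the two contributions each edge makes to $\Ahs-\Ahd$ (once through $\Vi$, once through $\Vj$) gives
\[
\Ahs(\uh,\I(\vh))-\Ahd(\uh,\I(\vh))=\sum_{\{i,j\}}\big(\vh(\xip)-\vh(\xjp)\big)\big(\Fcij(\uh)-\Fdij(\uh)\big),
\]
the sum running over distinct neighbouring pairs with $\l(\gij)>0$.

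The core is the per-edge consistency bound $|\Fcij(\uh)-\Fdij(\uh)|\preceq h^{2}\,\l(\gij)\,\|\grad\Euhp\|_{L^{\infty}(\omega_{ij})}$, where $\omega_{ij}$ denotes the union of the two planar triangles sharing the Delaunay edge $\tij$. Since $\Euhp=\uhp$ is affine on each planar triangle, the difference quotient in \eqref{eq:dflux} equals exactly the constant directional derivative $\grad\uhp\cdot\hat e_{ij}$ with $\hat e_{ij}=(\xjp-\xip)/\|\xjp-\xip\|$; because $\nxgi\parallel\overrightarrow{\xip\xjp}$, the quantity $\Fdij(\uh)$ is, up to a curvature factor, the integral over $\gij$ of the \emph{planar} conormal derivative. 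Splitting $\Fcij(\uh)$ at the point $\xij$ where $\gij$ crosses $\tij$, on each half the integrand $\grads\uh\cdot\nxgi$ is by Proposition \ref{prp:duextension} the same planar directional derivative times a factor deviating from $1$ by $O(h^{2})$ (as $\|\xp\|=1-O(h^{2})$ on $\Sh$ and the geodesic edge's conormal and metric differ from their chordal counterparts by $O(h^{2})$). The first-order part of this geometric perturbation is odd about $\xij$ and integrates to zero, by the perpendicularity $\tij\perp\gij$ and the bisection property (the plane through $\gij$ and the origin meets $\tij$ at its midpoint $\xij$); this is the same parity mechanism already used in Lemma \ref{lem:edgeIntp}. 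What survives is even and of size $O(h^{2})$. I expect this symmetry-based cancellation to be the main obstacle: without it the geometric and quadrature mismatch is only $O(h)$, and obtaining the second order demands a careful Taylor expansion of both the lift and the edge geometry about $\xij$.

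Finally I would reassemble. Writing $|\vh(\xip)-\vh(\xjp)|\le\|\xjp-\xip\|\,\|\grad\vhp\|_{L^{\infty}(\omega_{ij})}$ and using $\|\xjp-\xip\|\preceq h$, $\l(\gij)\preceq h$ from Definition \ref{def:aluniform}, each edge term is controlled by $h^{4}\|\grad\uhp\|_{L^{\infty}(\omega_{ij})}\|\grad\vhp\|_{L^{\infty}(\omega_{ij})}$. Since $\grad\uhp$ and $\grad\vhp$ are constant on triangles of area $\approx h^{2}$, the inverse-type identities $\|\grad\uhp\|_{L^{\infty}(\Tgi)}\approx h^{-2/p}\|\grad\uhp\|_{L^{p}(\Tgi)}$ and $\|\grad\vhp\|_{L^{\infty}(\Tgi)}\approx h^{-2/q}\|\grad\vhp\|_{L^{q}(\Tgi)}$ (Lemma \ref{lem:propinverse}) turn these into $L^{p}$/$L^{q}$ factors; because $1/p+1/q=1$ the accumulated powers of $h$ are $h^{4}\cdot h^{-2/p-2/q}=h^{2}$, which I pull out of the sum. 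A discrete Hölder inequality over edges, the finite overlap of the patches $\omega_{ij}$, and the norm equivalence $\|\grad\Euhp\|_{L^{p}(\Tgi)}\approx\|\grads\uh\|_{\LpT}$ of Proposition \ref{prp:equivnormext} then give
\[
\big|\Ahs(\uh,\I(\vh))-\Ahd(\uh,\I(\vh))\big|\preceq h^{2}\,\|\grads\uh\|_{\Lp}\,\|\grads\vh\|_{\Lq},
\]
as claimed.
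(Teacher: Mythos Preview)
Your proposal is correct and follows essentially the same strategy as the paper: the paper's proof simply cites \cite[Lemma~4]{du2005finite} for the case $p=q=2$ (which is precisely your edge-by-edge antisymmetry decomposition plus the second-order per-edge consistency via the midpoint/orthogonality cancellation) and then observes that replacing Cauchy--Schwarz by H\"older and invoking the norm equivalence of Proposition~\ref{prp:equivnorm} gives the general $p,q$ case. Your sketch supplies the details that the paper leaves to the reference.
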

\begin{proof}
This estimate was established for $p=q=2$ in \cite[Lemma 4, pp.~1686]{du2005finite}. The same proof applies to show the estimates for general $p$ and $q$ using a Hölder inequality and norm equivalence from Proposition \ref{prp:equivnorm}.
\end{proof}

We establish the following technical lemma.

\begin{lem}
\label{lem:help}
Let $\x\in\S$ and $\wh\in\U$, then there exists a positive constant $C$, independent of $h$, such that
\begin{equation}
\label{eq:lastlem}
|\laps\wh(\x)|\leq Ch\|\grads \wh\|.
\end{equation}
\end{lem}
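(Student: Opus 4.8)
The statement bounds the discrete Laplacian (applied to a finite element function $\wh \in \U$) pointwise in terms of its tangential gradient norm. The key observation is that $\laps\wh$ is \emph{not} the continuous Laplacian applied to a smooth function, but rather must be understood through the finite volume discretization $\lapd$ defined in \eqref{eq:FVscheme}, since $\wh$ is only piecewise linear and its second tangential derivatives are distributional. The plan is to reduce the pointwise bound on $\laps\wh$ at a point $\x$ lying in a cell $\Vi$ to the discrete flux balance over that cell, and then to exploit the inverse estimate from Lemma~\ref{lem:propinverse} to trade one factor of $h$ against the gradient norm.

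First I would fix $\x \in \S$, let $\Vi$ be the Voronoï cell containing $\x$, and interpret $\laps\wh(\x)$ via the scheme \eqref{eq:FVscheme}, namely
\[
\laps\wh(\x) = (\lapd \wh)_i = \frac{1}{\a(\Vi)}\sum_{j\in\LiN}\Fdij(\wh),
\]
so that by the definition \eqref{eq:dflux} of the discrete flux,
\[
|\laps\wh(\x)| \leq \frac{1}{\a(\Vi)}\sum_{j\in\LiN}\l(\gij)\left|\frac{\wh(\xjp)-\wh(\xip)}{\|\xjp-\xip\|}\right|.
\]
Using the almost-uniform conditions of Definition~\ref{def:aluniform}, I have $\a(\Vi)\succeq h^2$, $\l(\gij)\preceq h$, and $\card(\LiN)\preceq 1$ (bounded valence), so the prefactor $\l(\gij)/\a(\Vi)$ is of order $h^{-1}$. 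Each difference quotient is bounded by $\|\grads\wh\|_{\Li(\Tgsi)}$ on the adjacent triangles, since $\wh$ is affine on each planar triangle $\Tgi$ and $\grads\wh$ is essentially constant there by Proposition~\ref{prp:duextension}. This yields a preliminary bound $|\laps\wh(\x)| \preceq h^{-1}\cdot h \cdot \|\grads\wh\|_{\Li} = \|\grads\wh\|_{\Li}$, which is the wrong norm on the right-hand side.

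The crux is therefore converting the local $L^\infty$ gradient bound into the (presumably $L^2$) norm $\|\grads\wh\|$ appearing in the statement, while gaining the extra factor of $h$. This is exactly where the inverse estimate of Lemma~\ref{lem:propinverse} enters: applying it with $m=l=0$ comparing $\Li$ to $\Lt$, or more directly with the gradient replaced via $p=\infty$, $q=2$, $l=m$, I can bound $\|\grads\wh\|_{\Li(\Tgsi)} \preceq h^{-1}\|\grads\wh\|_{\LtT}$, since the inverse estimate costs a power $h^{-2(1/q-1/p)} = h^{-1}$ when passing from $L^2$ to $L^\infty$ on a two-dimensional cell. Combining this with the preliminary estimate gives $|\laps\wh(\x)| \preceq \|\grads\wh\|_{\Li} \preceq h^{-1}\|\grads\wh\|_{\LtT}$, which still carries the wrong power. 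The resolution I anticipate is that the preliminary bound actually has an extra factor of $h$ hidden in it: a more careful accounting shows the difference quotient itself already equals a directional derivative of the affine function, so that $|\laps\wh(\x)|\preceq \|\grads\wh\|_{\Li}$ should be sharpened, or the prefactor $\l(\gij)/\a(\Vi)$ should be re-examined to reveal an $O(1)$ rather than $O(h^{-1})$ contribution after cancellations. The main obstacle will be tracking these powers of $h$ precisely and deciding whether the target norm $\|\grads\wh\|$ is $\Li$ or $\Lt$; once the correct interplay between the discrete flux scaling and the inverse estimate is pinned down, the bound \eqref{eq:lastlem} follows by summing over the bounded number of neighbors and invoking the norm equivalence of Proposition~\ref{prp:equivnorm} to pass between discrete and continuous gradient norms.
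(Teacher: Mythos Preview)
Your proposal rests on a misreading of both sides of the inequality. The quantity $\laps\wh(\x)$ here is \emph{not} the discrete operator $\lapd$ of \eqref{eq:FVscheme}; it is the genuine Laplace--Beltrami operator applied pointwise to the lifted finite element function. Inside each spherical triangle $\Tgsi$, the function $\wh$ is the composition of an affine map on the planar triangle $\Tgi$ with the smooth radial projection $\Prj^{-1}$, so $\wh\big|_{\Tgsi}$ is $C^\infty$ in the interior and $\laps\wh(\x)$ is perfectly well defined there---it is only across triangle edges that second derivatives become distributional, and the lemma is used in Lemma~\ref{lem:AAhs} only under an integral over each $\Tgsi$ separately. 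Likewise, the right-hand side $\|\grads\wh\|$ is the pointwise Euclidean length of the tangential gradient at $\x$, not an $L^2$ or $L^\infty$ norm; your attempt to invoke the inverse estimate to convert between function-space norms is therefore aimed at the wrong target, which is why the powers of $h$ never line up.

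The actual mechanism is purely geometric: in the flat case an affine function has zero Laplacian, but when you lift it radially to the sphere the curvature produces a nonzero $\laps\wh$ whose size is controlled by the angular extent of the triangle. The paper carries this out by placing the planar triangle on a plane $x_3^\ast=\text{const}$, writing the lift in spherical coordinates $(r,\theta,\phi)$, and computing directly that
\[
\Delta\wh(r,\theta,\phi)=\frac{2}{r}\tan\phi\,\bigl(\nabla\wh\cdot\bva\bigr).
\]
Since the spherical triangle has diameter $\Or(h)$, one has $|\phi|\leq Ch$ and hence $|\tan\phi|\leq Ch$, which gives $|\laps\wh(\x)|\leq Ch\|\grads\wh(\x)\|$ pointwise. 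No discrete fluxes, inverse estimates, or norm equivalences enter; the whole content is that the curvature correction to the Laplacian of a lifted affine function is first order in the triangle size.
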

\begin{proof}
Take a planar triangle $\Tgi\in\Th$ and its radial projection $\Tgsi=\Prj(\Tgi)$, using $\x=\Prj(\xp) \in \Tgsi$, for $x^*\in \Tgi$. Without loss of generality, to simplify calculations, assume this triangle lies on a plane of constant third coordinate $x_3^*$, so that $\bxat$ of $\xp=(\xaf,\xas,\bxat)\in\Tgi$, with fixed $\bxat$, can fully define the planar triangle in terms of the variables $(\xaf,\xas)$\footnote{Any other planar triangle of the grid can be obtained in this representation by rotations of the sphere.}. 

By construction, we have $1-Ch^{2}<\|\xp\|\leq 1$, for $h$ small enough. Let $\wh:\Rt\setminus\{0\}\to\R$ be linear (affine) on $\Tgi$, that is, $\wh(\xaf,\xas,\bxat)=a\xaf+b\xas+c\bxat+d=a\xaf+b\xas+e$, where $e=c\bxat+d$ is constant and $a,b,c,d,e\in\R$. In spherical coordinates, we get that
\[
\xaf=r\cos\theta\sin\phi,\quad\xas=r\sin\theta\sin\phi,\quad\xat=r\cos\phi,\quad\text{with }\theta\in[-\pi,\pi],\,\phi\in[-\tfrac{\pi}{2},\tfrac{\pi}{2}],
\]
where
\[
r=\|\xp\|,\quad \theta=\arctan\left(\tfrac{\xas}{\xaf}\right),\quad \phi=\arccos\left(\tfrac{\xat}{r}\right).
\]
Then, for $\bxat$ fixed, we can write, 
\[
\wh(\xaf,\xas,\bxat)=a\xaf+b\xas+e=a\bxat\cos\theta\tan\phi+b\bxat\sin\theta\tan\phi+e.
\]
Notice that $\wh(r,\theta,\phi)$ is constant for given fixed $\theta$ and $\phi$, and therefore is invariant with respect to $r$ on $\Tgi$, thus $\wh\in\U$. Now,
\[
\grad\wh(r,\theta,\phi)=\br\pr_{r}\wh(r,\theta,\phi)+\bth\frac{1}{r\sin\phi}\pr_{\theta}\wh(r,\theta,\phi)+\bva\frac{1}{r}\pr_{\phi}\wh(r,\theta,\phi),
\]
where $\{\br,\bth,\bva\}$ are the local orthogonal unit vectors in the directions of increasing $r,\theta$, and $\phi$. Observe that $\pr_{r}\wh(r,\theta,\phi)=0$, so
\[
\grad\wh(r,\theta,\phi)=\bth\frac{1}{r\sin\phi}\tan\phi\left(-a\bxat\sin\theta+b\bxat\cos\theta\right)+\bva\frac{1}{r}\sec^{2}\phi\left(a\bxat\cos\theta+b\bxat\sin\theta\right).
\]
By a similar calculation, we can find that
\begin{align*}
\Delta\wh(r,\theta,\phi)&=\frac{1}{r^{2}}\prt_{\phi}\wh+\frac{1}{\sin^{2}\phi}\prt_{\theta}\wh+\frac{1}{r^{2}}\frac{\cos\phi}{\sin{\phi}}\pr_{\phi}\wh\\
&=\frac{2}{r^{2}}\tan\phi\sec^{2}\phi(a\bxat\cos\theta+b\bxat\sin\theta)-\frac{1}{r^{2}\sin^{2}\phi}\tan\phi(a\bxat\cos\theta+b\bxat\sin\theta)\\
&\quad+\frac{1}{r^{2}}\frac{\cos\phi}{\sin\phi}\sec^{2}\phi(a\bxat\cos\theta\sec^{2}\phi+b\bxat\sin\theta)\\
&=\frac{2}{r^{2}}\tan\phi\sec^{2}\phi(a\bxat\cos\theta+b\bxat\sin\theta)-\frac{1}{r^{2}\sin^{2}\phi}\tan\phi(a\bxat\cos\theta+b\bxat\sin\theta)\\
&\quad+\frac{1}{r^{2}}\frac{1}{\sin^{2}\phi}\frac{\sin\phi}{\cos\phi}(a\bxat\cos\theta+b\bxat\sin\theta)\\
&=\frac{2}{r}\tan\phi(\nabla\wh(r,\theta,\phi)\cdot\bva).
\end{align*}
Then, for $\phi\in[-\tfrac{h}{2},\tfrac{h}{2}]$, and $r$ is in the shell, \ie, $1-Ch^{2}<r<1+Ch^{2}$, we have that
\[
|\Delta\wh(r,\theta,\phi)|\leq \left|\frac{2}{1-Ch^{2}}\tan\tfrac{h}{2}\right|\|\nabla\wh(r,\theta,\phi)\|,
\]
Observe that
\[
\frac{1}{1-Ch^{2}}\tan\tfrac{h}{2}=\left(1+Ch^{2}+C^{2}h^{4}+\cdots\right)\left(\tfrac{h}{2}+\tfrac{1}{3}\left(\tfrac{h}{2}\right)^{3}+\cdots\right)=h/2+\Or(h^{3}).
\]
Therefore,
\[
|\Delta\wh(r,\theta,\phi)|\leq Ch\|\nabla \wh(r,\theta,\phi)\|.
\]
Taking $r=1$, and using Proposition \ref{prp:duextension}, we arrive at \eqref{eq:lastlem}.
\end{proof}

\begin{lem}
\label{lem:AAhs}
Assume $p>1$ such that $1/p+1/q=1$. Let $\VGs=\{\xip,\Vi\}_{i=1}^{N}$ be an almost uniform Voronoï-Delaunay decomposition on $\S$. Let $\A(\cdot,\cdot)$ and $\Ahs(\cdot,\cdot)$ be the bilinear forms defined in \eqref{eq:biformFE} and \eqref{eq:biformFV} respectively. Also, assume that $\uh\in\U$ is the unique solution of problem  \eqref{eq:weakformv}. Then, there exists a positive constant $C$, independent of $h$, such that
\begin{equation}
\label{eq:lemh2:main}
|\A(\uh,\vh)-\Ahs(\uh,\I(\vh))|\leq C h^{2} \|\grads\uh\|_{\Lp} \|\grads\vh\|_{\Lq},
\end{equation}
for each $\vh\in\U$.
\end{lem}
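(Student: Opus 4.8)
The plan is to exhibit $\A(\uh,\vh)-\Ahs(\uh,\I(\vh))$ as a genuine consistency error that vanishes in the flat case and is driven purely by the curvature of $\S$, so that the two powers of $h$ come from Lemma~\ref{lem:help} and Lemma~\ref{lem:edgeIntp}. First I would write $\A(\uh,\vh)=\sum_i\int_{\Vi}\grads\uh\cdot\grads\vh\,ds(\x)$ and integrate by parts on each cell. Since $\uh$ is only piecewise smooth on $\Vi$ (its gradient jumps across the half-Delaunay edges issuing from $\xip$), the divergence theorem applied to each smooth piece produces a boundary term on $\pr\Vi$, a volume term $-\int_{\Vi}\laps\uh\,\vh$, and interior jump terms along those half-edges. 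Summing over $i$, the boundary integrals cancel on every Voronoï edge $\gij$: there $\grads\uh$ has the same trace from the $\Vi$ and $\Vj$ sides (both lie in a common Delaunay triangle), while $\nxgi$ reverses sign. Carrying out the same Green identity on $\sum_{j\in\LiN}\Fcij(\uh)$ and recalling that $\I(\vh)\equiv\vh(\xip)$ on $\Vi$, the analogous computation for $\Ahs(\uh,\I(\vh))$ gives the identical volume and jump structure, but tested against $\I(\vh)$. Subtracting and reassembling the two half-edges of each Delaunay edge $\tij$ into a full-edge integral, I expect to arrive at
\[
\A(\uh,\vh)-\Ahs(\uh,\I(\vh))=-\int_{\S}\laps\uh\,(\vh-\I(\vh))\,ds(\x)+\sum_{\tij}\int_{\tij}\big[\grads\uh\cdot\mathbf n_{ij}\big]\,(\vh-\I(\vh))\,d\gamma(\x),
\]
where $\big[\grads\uh\cdot\mathbf n_{ij}\big]$ is the jump of the co-normal derivative across $\tij$. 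As a check, both terms vanish in the plane: there $\laps\uh=0$ and the jump is constant along $\tij$, so \eqref{eq:taus} annihilates the edge term — this is the classical covolume identity $\A=\Ahs$.

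The volume term is routine. By Hölder with $1/p+1/q=1$ it is bounded by $\|\laps\uh\|_{\Lp}\|\vh-\I(\vh)\|_{\Lq}$; applying Lemma~\ref{lem:help} pointwise and integrating gives $\|\laps\uh\|_{\Lp}\preceq h\|\grads\uh\|_{\Lp}$, while summing \eqref{eq:estimIntp} over $\Ths$ gives $\|\vh-\I(\vh)\|_{\Lq}\preceq h\|\grads\vh\|_{\Lq}$. Hence this term is $\Or(h^{2})\|\grads\uh\|_{\Lp}\|\grads\vh\|_{\Lq}$, as required in \eqref{eq:lemh2:main}.

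The edge term is the crux and the step I expect to be the main obstacle. A naive bound, estimating the jump by $\Or(\|\grads\uh\|)$, only yields $\Or(h)$; the cancellation \eqref{eq:taus} must be used. Since $\int_{\tij}(\vh-\I(\vh))\,d\gamma=0$, I may subtract any constant $c_{ij}$ (say the value of the jump at the midpoint $\xij$) and estimate $\int_{\tij}\big(\big[\grads\uh\cdot\mathbf n_{ij}\big]-c_{ij}\big)(\vh-\I(\vh))$. The key is that the \emph{oscillation} of the co-normal jump along a single arc is $\Or(h^{2}\|\grads\uh\|_{L^{\infty}})$, not $\Or(h\|\grads\uh\|_{L^{\infty}})$. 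This rests on two points: the co-normal $\mathbf n_{ij}$ to the geodesic arc $\tij$ is a \emph{fixed} vector in $\Rt$ (the unit normal to the great-circle plane of $\tij$), so differentiating $\grads\uh\cdot\mathbf n_{ij}$ along $\tij$ differentiates only $\grads\uh$; and, because the lift $\Eu$ is constant along rays (Proposition~\ref{prp:duextension}), the ambient Hessian of $\Eu$ contracted with two tangential directions equals the intrinsic Hessian of $\uh$ on $\S$, every component of which is $\Or(h\|\grads\uh\|)$ by the same spherical-coordinate computation that underlies Lemma~\ref{lem:help}. Integrating this derivative bound along an arc of length $\Or(h)$ gives the claimed oscillation. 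I would then combine the per-edge estimate $\|\,[\grads\uh\cdot\mathbf n_{ij}]-c_{ij}\|_{L^{p}(\tij)}\,\|\vh-\I(\vh)\|_{L^{q}(\tij)}$ with a discrete Hölder inequality over the edges, converting the $L^{\infty}$ patch bounds back to $\|\grads\uh\|_{\Lp}$ and $\|\grads\vh\|_{\Lq}$ via the almost-uniformity of Definition~\ref{def:aluniform} and the inverse estimate Lemma~\ref{lem:propinverse}; this recovers the remaining factor $h^{2}$ and completes \eqref{eq:lemh2:main}.
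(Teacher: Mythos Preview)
Your strategy coincides with the paper's: both integrate by parts (the paper on each $\Tgsi$, you on subdivided Voronoï cells---equivalent reorganizations) to reach a volume term $-\int_{\S}\laps\uh\,(\vh-\I(\vh))\,ds$ plus edge integrals along the $\tij$, and both bound the volume term via Lemma~\ref{lem:help} and \eqref{eq:estimIntp} exactly as you describe. The only substantive difference is the edge term. The paper simply asserts that $\grads\uh\cdot\nxti\,(\vh-\I(\vh))$ is anti-symmetric about the midpoint of $\tij$ and hence, invoking \eqref{eq:taus}, integrates to zero; the edge contribution is then dropped entirely and only the volume term is estimated. Your suspicion that this step hides something is justified---on $\S$ the co-normal derivative $\grads\uh\cdot\nxti$ is not exactly even about the midpoint (its odd part picks up the tilt of the triangle plane relative to the great-circle plane of $\tij$)---so your route of subtracting $c_{ij}$ via \eqref{eq:taus} and bounding the remaining oscillation is the more honest version of the same idea and still yields $h^{2}$. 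One caveat: your oscillation estimate leans on a pointwise bound for the \emph{full} surface Hessian of $\uh$, whereas Lemma~\ref{lem:help} only controls its trace $\laps\uh$; the componentwise bound $|\nabla_{s}^{2}\uh|\preceq h\|\grads\uh\|$ is correct and follows from the same spherical-coordinate calculation (each second covariant derivative of the lifted affine function carries a factor $\tan\phi=\Or(h)$), but it is a genuine extension of Lemma~\ref{lem:help} that you should state and verify separately rather than merely cite.
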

\begin{proof}
Given $\uh\in\U$, then $\uh\big|_{\Tgsi}\in\WtpT$ for each $\Tgsi\in\Ths$. Multiplying $-\laps\uh$ by $\vh\in\U$ and by using Gauss Theorem, we have
\[
-\int_{\Tgsi}\laps\uh(\x)\vh(\x) ds(\x)=\int_{\Tgsi}\grads\uh(\x)\cdot\grads\vh(\x)ds(\x)-\int_{\pr\Tgsi}\grads\uh(\x)\cdot\nxti\vh(\x)d\gamma(\x).
\]
From definition of $\A(\cdot,\cdot)$ and summing up all $\Tgsi\in\Ths$, we get
\begin{equation}
\label{eq:bifem}
\A(\uh,\vh)=\sum_{\Tgsi\in\Ths}\left[\int_{\Tgsi}-\laps\uh(\x)\vh(\x)ds(\x)+\int_{\pr\Tgsi}\grads\uh(\x)\cdot\nxti\vh(\x)d\gamma(\x)\right].
\end{equation}
From Lemma \ref{lem:edgeIntp}, we consider again the spherical polygonal regions $\Qi,\Qj$ and $\Qk$ made of the intersection of the geodesic triangle $\Tgsi$ with the three Voronoï~cells associated to each vertex of the triangle, \ie, $\Qn=\Vn\cap\Tgsi$, $n\in\{i,j,k\}$, with boundaries
\[
\pr\Qn=(\pr\Vn\cap\Tgsi)\cup(\pr\Tgsi\cap\Vn),\quad n\in\{i,j,k\}.
\]
Now, multiplying $-\laps\uh$ by $\I(\vh)\in\Vhs$ and integrating over $\Tgsi$, follows that
\[
\begin{split}
-\int_{\Tgsi}\laps\uh(\x)\I(\vh)(\x)ds(\x)&=\sum_{n=i,j,k}\int_{\Qn}\grads\uh(\x)\cdot\grads\I(\vh)(\x)ds(\x)\\
& \quad-\sum_{n=i,j,k}\int_{\pr\Qn}\grads\uh(\x)\cdot\nxqn \I(\vh)(\x)d\gamma(\x)\\
& = -\sum_{n=i,j,k}\int_{\partial \Qn}\grads u_{h}(\x)\cdot\nxqn \I(\vh)(\x)d\gamma(\x).
\end{split}
\]
Rearranging the boundary $\pr\Qn$ with $n\in\{i,j,k\}$, we have
\[
\begin{split}
-\int_{\Tgsi}\laps\uh(\x)\I(\vh)(\x)ds(\x)& = -\int_{\pr\Tgsi}\grads\uh(\x)\cdot\nxti\I(\vh)(\x) d\gamma(\x)\\
& \quad-\sum_{n=i,j,k}\int_{\pr\Vn\cap\Tgsi}\grads\uh(\x)\cdot\nxvn \I(\vh)(\x)d\gamma(\x).
\end{split}
\]
Now, summing up all geodesic triangles of $\Ths$ and using duality principle, \ie, each edge of $\Tgsi$ intersects to a unique dual Voronoï edge, we get
\begin{align*}
-\sum_{\Tgsi\in\Ths}\int_{\Tgsi}\laps\uh(\x)\I(\vh)(\x)ds(\x) &= -\sum_{\Tgsi\in\Ths}\int_{\pr\Tgsi}\grads\uh(\x)\cdot\nxti\I(\vh)(\x) d\gamma(\x)\\
&\quad -\sum_{i=1}^{N}\int_{\pr\Vi}\grads\uh(\x)\cdot\nxvi \I(\vh)(\x)d\gamma(\x).
\end{align*}
The last term above on the right-hand side is the bilinear form $\Ahs(\cdot,\cdot)$. Therefore
\begin{equation}
\label{eq:bifvm}
\Ahs(\uh,\I(\vh))=\sum_{\Tgsi\in\Ths}\left[-\int_{\Tgsi}\laps\uh(\x)\I(\vh)(\x)ds(\x)+\int_{\pr\Tgsi}\grads\uh(\x)\cdot \nxti\I(\vh)(\x) d\gamma(\x)\right].
\end{equation}
Then, we subtract \eqref{eq:bifvm} from \eqref{eq:bifem} to obtain
\[
\begin{split}
\left|\A(\uh,\vh)-\right.&\left.\Ahs(\uh,\I(\vh))\right|\leq\left|\sum_{\Tgsi\in\Ths}\int_{\Tgsi}\laps\uh(\x)[\vh(\x)-\I(\vh)(\x)]ds(\x)\right|\\
& \quad+\left|\sum_{\Tgsi\in\Ths}\int_{\pr\Tgsi}\grads\uh(\x)\cdot\nxti[\vh(\x)-\I(\vh)(\x)]d\gamma(\x)\right|.
\end{split}
\]
For each $\tij\in\pr\Tgsi$ the function  $\grads\uh(\x)\cdot\nxti[\vh(\x)-\I(\vh)(\x)]$ is anti-symmetric with respect to the edge's midpoint, and therefore as shown in Lemma \ref{lem:edgeIntp} its integral along the edge vanishes. It follows that
\[
\left|\A(\uh,\vh)-\Ahs(\uh,\I(\vh))\right|\leq\left|\sum_{\Tgsi\in\Ths}\int_{\Tgsi}\laps\uh(\x)[\vh(\x)-\I(\vh)(\x)]ds(\x)\right|.
\]
Finally, invoking Lemma \ref{lem:help}, the H\io lder inequality, Proposition \ref{prp:equivnormext} and Lemma \ref{lem:edgeIntp}, we arrive to
\begin{align*}
\left|\A(\uh,\vh)-\Ahs(\uh,\I(\vh))\right|&\leq  \sum_{\Tgsi\in\Ths}\int_{\Tgsi}|\laps\uh(\x)||\vh(\x)-\I(\vh)(\x)|ds(\x)\\
&\leq Ch\sum_{\Tgsi\in\Ths}\int_{\Tgsi}\|\grads\uh\||\vh(\x)-\I(\vh)(\x)|ds(\x)\\
&\leq Ch^{2}\|\grads\uh\|_{\Lp}\|\grads\vh\|_{\Lq}.
\end{align*}
This finishes the proof.
\end{proof}

\begin{lem}
\label{lem:mainfNOPT}
Assume that $f\in L^{p}(\S)$, with $p>1$ satisfies the compatibility condition \eqref{eq:compf}. Then, for each  $\vh\in\U$, with  $1/p+1/q=1$ there exists a positive constant $C$, independent of $h$, such that
\begin{equation}
\label{eq:mainfNOPT}
\left|(f,\vh)-(f,\I(\vh))\right|\leq C h \|f\|_{\Lp}\|\grads\vh\|_{\Lq},
\end{equation}
\end{lem}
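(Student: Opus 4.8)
The goal is to estimate $|(f,\vh)-(f,\I(\vh))| = |(f, \vh - \I(\vh))|$, so the plan is to exploit the cancellation properties of the transference interpolant $\I$ already captured in Lemma~\ref{lem:edgeIntp}. First I would write the difference as an integral of $f$ against $\vh - \I(\vh)$ over $\S$, decompose $\S$ into the geodesic triangles $\Tgsi\in\Ths$, and then apply the H\io lder inequality on each triangle with exponents $p$ and $q$:
\[
\left|(f,\vh)-(f,\I(\vh))\right|\leq\sum_{\Tgsi\in\Ths}\|f\|_{\LpT}\|\vh-\I(\vh)\|_{\LqT}.
\]

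The key step is then to invoke the local interpolation estimate \eqref{eq:estimIntp} from Lemma~\ref{lem:edgeIntp}, namely $\|\vh-\I(\vh)\|_{\LqT}\leq Ch\|\vh\|_{\WoqT}$, which on $\U$ amounts to $Ch\|\grads\vh\|_{\LqT}$ since $\vh$ is (the lift of) a piecewise-linear function. Substituting this bound gives
\[
\left|(f,\vh)-(f,\I(\vh))\right|\leq Ch\sum_{\Tgsi\in\Ths}\|f\|_{\LpT}\|\grads\vh\|_{\LqT}.
\]
To collapse the sum back into global norms I would apply the discrete H\io lder inequality (H\io lder for sequences) over the index set of triangles, recognizing $\sum_\tau \|f\|_{\LpT}\|\grads\vh\|_{\LqT}\leq \bigl(\sum_\tau\|f\|_{\LpT}^p\bigr)^{1/p}\bigl(\sum_\tau\|\grads\vh\|_{\LqT}^q\bigr)^{1/q} = \|f\|_{\Lp}\|\grads\vh\|_{\Lq}$, which yields \eqref{eq:mainfNOPT} after one factor of $h$.

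The main obstacle I anticipate is a subtle regularity/consistency point rather than a hard inequality. The estimate \eqref{eq:estimIntp} in Lemma~\ref{lem:edgeIntp} is stated only for $1\leq q<\infty$, so the argument as written applies cleanly when $q<\infty$, i.e. $p>1$, exactly the hypothesis here; but I should be careful that the bound $\|\vh\|_{\WoqT}$ there is genuinely controlled by $\|\grads\vh\|_{\LqT}$ (the $L^q$ part of the full $W^{1,q}$ norm contributes only the gradient term since we may absorb the function-value contribution via norm equivalence or because only the gradient survives in the interpolation error for affine functions). A secondary point worth verifying is that $f\in\Lp$ is enough to make $\|f\|_{\LpT}$ finite on each triangle and summable — this is immediate from $f\in\Lp$ globally. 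Everything else is a routine two-fold application of H\io lder (once locally in space, once discretely over triangles) together with the already-proven Lemma~\ref{lem:edgeIntp}.
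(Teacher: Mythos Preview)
Your proposal is correct and follows essentially the same route as the paper: decompose into triangles, apply H\"older locally, invoke the interpolation estimate \eqref{eq:estimIntp} from Lemma~\ref{lem:edgeIntp}, and then use discrete H\"older to pass to global norms. The paper's proof is identical in structure and does not elaborate further on the points you flagged (replacing $\|\vh\|_{\WoqT}$ by $\|\grads\vh\|_{\LqT}$ is taken for granted there as well).
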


\begin{proof}
By applying the H\io lder's inequality with $1/p+1/q=1$ and using Lemma \ref{lem:edgeIntp}, we obtain 
\[
\begin{split}
\left|(f,\vh)-(f,\I(\vh))\right|&=\left|\sum_{\Tgsi\in\Ths}\int_{\Tgsi\in\Ths}f(\x)\left[\vh(\x)-\I(\vh)(\x)\right]ds(\x)\right|\\
& \leq Ch\sum_{\Tgsi\in\Ths}\|f\|_{\LpT}\|\grads\vh\|_{\LqT}\\
&\leq Ch\|f\|_{\Lp}\|\grads\vh\|_{\Lq}.
\end{split}
\]
Therefore, we get \eqref{eq:mainfNOPT}.
\end{proof}

\section{Error analysis}
\label{sec:erroranalysis}
In this section, we establish the estimates of convergence order of the approximate solutions of FVM in classical $H^{1}$, $L^{2}$-norm and $\max$-norm using the framework of \cite{ewing2002accuracy,du2005finite,ju2009finite} for Voronoï-Delaunay decomposition on $\S$ and highlight that the estimated convergence rates depend on the position of vertices of the geometric setting.

\subsection{Classical $H^{1}$ and $L^{2}$ estimates}

The following result provides an error estimate of the finite volume solution $\uh$ in the $H^{1}$ and $L^{2}$-norms with minimal regularity assumptions for the exact solution $u$ and is valid for Voronoï-Delaunay decompositions in general on $\S$. 
\begin{thm}
\label{thm:H1-NOPT}
Let $\VGs=\{\xip,\Vi\}_{i=1}^{N}$ be an almost uniform Voronoï-Delaunay decomposition on $\S$. Assume that $f\in L^{2}(\S)$ satisfies  \eqref{eq:compf} and the unique solution $u$ of \eqref{eq:weakform} belongs to $\Ht\cap\HqS$. Let $\Fdij$ be the discrete flux defined in \eqref{eq:dflux}, such that the discrete problem \eqref{eq:weakformv} has unique solution $\uh\in\U$. Then, there exists a positive constant $C$, independent of $h$, such that 
\begin{subequations}
\begin{align}
\|\grads\eh\|_{\Lt}&\leq Ch \left(\|u\|_{\Ht}+\|f\|_{\Lt}\right),\label{eq:mainH1}\\
\|\eh\|_{\Lt}&\leq Ch \|f\|_{\Lt}+Ch^{2}\|u\|_{\Ht},\label{eq:mainL2}
\end{align}
\end{subequations}
where $\eh=u-\uh$.
\end{thm}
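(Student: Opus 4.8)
The plan is to view the finite volume scheme as a perturbation of the conforming $\mathbb{P}_1$ finite element method and to measure everything against the finite element interpolant $\IU u \in \U$. Splitting $\eh = (u - \IU u) + \theta_h$ with $\theta_h := \IU u - \uh \in \U$, the interpolation estimates of Proposition \ref{prp:interp} (taken with $p=2$) immediately give $\|\grads(u-\IU u)\|_{\Lt}\preceq h\|u\|_{\Ht}$ and $\|u-\IU u\|_{\Lt}\preceq h^2\|u\|_{\Ht}$, so the whole task reduces to controlling the discrete part $\theta_h$.

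For \eqref{eq:mainH1} I would start from the discrete coercivity of Proposition \ref{prop:well-v}, $C_1|\theta_h|_{1,h}^2 \le \Ahd(\theta_h,\I(\theta_h))$, and use the discrete equation \eqref{eq:weakformv} to replace $\Ahd(\uh,\I(\theta_h))$ by $(f,\I(\theta_h))$, obtaining $\Ahd(\theta_h,\I(\theta_h)) = \Ahd(\IU u,\I(\theta_h)) - (f,\I(\theta_h))$. The heart of the argument is to transfer this to the continuous level by inserting $\A(\IU u,\theta_h)$ and $(f,\theta_h)$ and splitting into three pieces: a geometric consistency term $\Ahd(\IU u,\I(\theta_h))-\A(\IU u,\theta_h)$, a Galerkin term $\A(\IU u,\theta_h)-(f,\theta_h)$, and a source-quadrature term $(f,\theta_h)-(f,\I(\theta_h))$. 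The first is bounded by $O(h^2)\|\grads\IU u\|_{\Lt}\|\grads\theta_h\|_{\Lt}$ by chaining Lemma \ref{lem:AAhs} and Lemma \ref{lem:AhsAhd} (the perturbation estimates, applied here with first argument $\IU u$); for the second, since $u$ solves \eqref{eq:weakform} and $\theta_h\in\U\subset\HqS$, one has $\A(\IU u,\theta_h)-(f,\theta_h) = -\A(u-\IU u,\theta_h)$, bounded by continuity and interpolation by $O(h)\|u\|_{\Ht}\|\grads\theta_h\|_{\Lt}$; the third is bounded by $O(h)\|f\|_{\Lt}\|\grads\theta_h\|_{\Lt}$ via Lemma \ref{lem:mainfNOPT}. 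Using $\|\grads\IU u\|_{\Lt}\preceq\|u\|_{\Ht}$ and the norm equivalence $\|\grads\theta_h\|_{\Lt}\simeq|\theta_h|_{1,h}$ from Proposition \ref{prp:equivnorm}, dividing out one power of $|\theta_h|_{1,h}$ yields $|\theta_h|_{1,h}\preceq h(\|u\|_{\Ht}+\|f\|_{\Lt})$, and the triangle inequality gives \eqref{eq:mainH1}.

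For the $L^2$ bound \eqref{eq:mainL2} I would run an Aubin--Nitsche duality argument. Since $u$ and $\uh$ both have zero mean, $\eh$ satisfies the compatibility condition \eqref{eq:compf}, so the dual problem $\A(v,\psi)=(\eh,v)$ for all $v\in\HqS$ has a unique solution $\psi\in\HtHq$ with $\|\psi\|_{\Ht}\preceq\|\eh\|_{\Lt}$ by the regularity estimate \eqref{eq:regularity}. Testing with $v=\eh$ gives $\|\eh\|_{\Lt}^2 = \A(\eh,\psi)$, which I split as $\A(\eh,\psi-\IU\psi) + \A(\eh,\IU\psi)$. The first term is handled by continuity, the already-proven $H^1$ bound, and interpolation, giving $O(h^2)(\|u\|_{\Ht}+\|f\|_{\Lt})\|\eh\|_{\Lt}$. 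For the second I would again pass to the discrete form: using $\A(u,\IU\psi)=(f,\IU\psi)$ and inserting the discrete identity $(f,\I(\IU\psi))=\Ahd(\uh,\I(\IU\psi))$, the quantity $\A(\eh,\IU\psi)$ collapses to the source-quadrature defect $(f,\IU\psi)-(f,\I(\IU\psi))$ plus the geometric defect $\Ahd(\uh,\I(\IU\psi))-\A(\uh,\IU\psi)$. By Lemma \ref{lem:mainfNOPT} together with $\|\grads\IU\psi\|_{\Lt}\preceq\|\psi\|_{\Ht}\preceq\|\eh\|_{\Lt}$ the first defect is $O(h)\|f\|_{\Lt}\|\eh\|_{\Lt}$, and by the chained perturbation lemmas together with the stability bound $\|\grads\uh\|_{\Lt}\preceq\|f\|_{\Lt}$ from Proposition \ref{prp:estabilityv} the second is $O(h^2)\|f\|_{\Lt}\|\eh\|_{\Lt}$. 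Collecting and dividing by $\|\eh\|_{\Lt}$ gives \eqref{eq:mainL2}.

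The main obstacle --- and the reason the $L^2$ bound carries the genuinely first-order term $Ch\|f\|_{\Lt}$ rather than being fully second order --- is the source-quadrature defect $(f,\cdot)-(f,\I(\cdot))$, which Lemma \ref{lem:mainfNOPT} only controls at order $h$ under the assumption $f\in\Lt$. Unlike the geometric consistency terms, which are $O(h^2)$ and therefore harmless, this defect survives the duality argument and cannot be upgraded without extra regularity on $f$. Keeping careful track of which contributions are $O(h)$ versus $O(h^2)$ throughout the two chains of bilinear forms is the delicate bookkeeping at the center of the proof.
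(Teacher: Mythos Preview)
Your argument is correct and follows essentially the same route as the paper: compare the finite volume form to the finite element form through the chain $\Ahd\to\Ahs\to\A$ (Lemmas~\ref{lem:AhsAhd} and~\ref{lem:AAhs}), isolate the source-quadrature defect via Lemma~\ref{lem:mainfNOPT}, and run Aubin--Nitsche for the $L^2$ bound. The only visible difference is in the $H^1$ step: the paper starts from the continuous coercivity of $\A$ applied directly to $u-\uh$ and then absorbs the cross terms with Young's inequality, whereas you start from the discrete coercivity of $\Ahd$ applied to $\theta_h=\IU u-\uh$ and divide out $|\theta_h|_{1,h}$. Both are standard and yield the same bound; your version is slightly cleaner in that it avoids the absorption step. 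One small caveat: Lemmas~\ref{lem:AhsAhd} and~\ref{lem:AAhs} are stated with the discrete solution $\uh$ in the first slot, while you invoke them with $\IU u$; their proofs in fact hold for arbitrary elements of $\U$, so this is harmless, but it is worth a one-line remark when you write it up.
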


\begin{proof}
Firstly, for \eqref{eq:mainH1} we consider the coercivity of $\A(\cdot,\cdot)$, and define $\vah=\IU(u)-\uh$. Then by  Proposition \ref{prp:interp} and triangular inequality, we get
\begin{equation}
\label{eq:H1M1}
\begin{split}
\|\grads (u-\uh)\|_{\Lt}^{2}&\preceq \left|\A(u-\uh,u-\uh)\right|\\
& \preceq \left|\A(u-\uh,u-\IU(u))\right|+\left|\A(u,\vah)-\A(\uh,\vah)\right|\\
&\preceq \left|\A(u-\uh,u-\IU(u))\right|+\left|\A(u,\vah)-\Ahd(\uh,\I(\vah))\right|\\
& \quad+\left|\Ahd(\uh,\I(\vah))-\Ahs(\uh,\I(\vah))\right|\\
& \quad +\left|\Ahs(\uh,\I(\vah)-\A(\uh,\vah)\right|\\
& = I_{1}+I_{2}+I_{3}+I_{4},
\end{split}
\end{equation}
where the hidden constant in \lq{}\lq{}$\preceq$\rq{}\rq{} comes from the coercivity of $\A(\cdot,\cdot)$. For $I_{1}$, applying the continuity of $\A(\cdot,\cdot)$ and Proposition \ref{prp:interp}, we have 
\begin{align}
I_{1}=|\A(u-\uh,u-\IU(u))|&\leq C\|\grads (u-\uh)\|_{\Lt}\|\grads(u-\IU(u))\|_{\Lt}\nonumber\\
&\leq Ch\|\grads (u-\uh)\|_{\Lt}\|u\|_{\Ht}.\label{eq:H101}
\end{align}
For  $I_{2}$, from right-hand sides of variational problems \eqref{eq:weakform} and \eqref{eq:weakformv} and putting $p=q=2$ in Lemma \ref{lem:mainfNOPT}, we find
\[
I_{2}=\left|\A(u,\vah)-\Ahd(\uh,\I(\vah))\right|=|(f,\vah-\I(\vah))|\leq Ch\|f\|_{\Lt}\|\grads\vah\|_{\Lt}.
\]
About $I_{3}$, by Lemma \ref{lem:AhsAhd}, equation \eqref{eq:stabilityv} and  Proposition \ref{prp:equivnorm} follows that
\begin{equation}
\label{eq:H103}
\begin{split}
I_{3}=\left|\Ahd(\uh,\I(\vah))-\Ahs(\uh,\I(\vah))\right|&\leq Ch^{2}\|\grads\uh\|_{\Lt}\|\grads\vah\|_{\Lt}\\
&\leq Ch^{2}\|f\|_{\Lt}\|\grads\vah\|_{\Lt}.
\end{split}
\end{equation}
Similarly, for $I_{4}$ we use Lemma \ref{lem:AAhs} to get
\begin{equation}
\begin{split}
\label{eq:H104}
I_{4}=\left|\Ahs(\uh,\I(\vah))-\A(\uh,\vah)\right|&\leq C h^{2}\|\grads\uh\|_{\Lt}\|\grads\vah\|_{\Lt}\\
&\leq  Ch^{2}\|f\|_{\Lt}\|\grads\vah\|_{\Lt}.
\end{split}
\end{equation}
Gathering \eqref{eq:H1M1} and inequalities \eqref{eq:H101}--\eqref{eq:H104}, we have an expression as
\begin{equation}
\begin{split}
\|\grads(u-\uh)\|_{\Lt}^{2}&\leq C h\|\grads(u-\uh)\|_{\Lt}\|u\|_{\Ht}+Ch\|f\|_{\Lt}\|\grads\vah\|_{\Lt}\\
& \quad+Ch^{2}\|f\|_{\Lt}\|\grads\vah\|_{\Lt}.
\end{split}
\end{equation}
Using Young's and triangle inequalities, and Proposition \ref{prp:interp}, we find
\begin{align*}
\|\grads(u-\uh)\|_{\Lt}^{2}&\preceq \epsilon\left(\|\grads(u-\uh)\|^{2}_{\Lt}+\|\grads\vah\|^{2}_{\Lt}\right)+h^{2}\left(\|u\|^{2}_{\Ht}+\|f\|^{2}_{\Lt}\right)\\
&\quad+h^{4}\|f\|^{2}_{\Lt}\\
&\preceq \epsilon\left(\|\grads(u-\uh)\|^{2}_{\Lt}+Ch^{2}\|u\|^{2}_{\Ht}\right)+h^{2}\left(\|u\|^{2}_{\Ht}+\|f\|^{2}_{\Lt}\right)\\
&\quad+h^{4}\|f\|^{2}_{\Lt}.
\end{align*}
Taking $\epsilon$ small enough, we arrive to
\[
\|\grads(u-\uh)\|_{\Lt}^{2}\preceq h^{2}\left(\|u\|^{2}_{\Ht}+\|f\|^{2}_{\Lt}\right)+h^{4}\|f\|^{2}_{\Lt}.
\]
Finally, for $h$ small enough, we obtain \eqref{eq:mainH1}. 

In order to prove \eqref{eq:mainL2}, we derive an error estimate following classical duality argument: for $u-\uh\in \HqS$, from \eqref{eq:weakform}, there exists a unique weak solution $w\in \HtHq$ satisfying
\[
\A(v,w)=(v,u-\uh),\quad \mbox{for all }v\in\HqS.
\]
Taking $v=u-\uh$ and by using the regularity estimate \eqref{eq:regularity}, we have
\begin{equation}
\label{eq:regularityw}
\|w\|_{\Ht}\leq C\|u-\uh\|_{\Lt}.
\end{equation}
Now, assume that $\wh=\IU(w)\in\U$, we obtain 
\[
\begin{split}
\|u-\uh\|_{\Lt}^{2}&=(u-\uh,u-\uh)\leq  |\A(u-\uh,w)|\\
&=|\A(u-\uh,w-\wh)+\A(u,\wh)-\A(\uh,\wh)|\\
&\leq |\A(u-\uh,w-\wh)|+|\A(u,\wh)-\Ahd(\uh,\I(\wh))|\\
& \quad+|\Ahd(\uh,\I(\wh))-\Ahs(\uh,\I(\wh))|\\
&\quad+|\Ahs(\uh,\I(\wh))-\A(\uh,\wh)|\\
&= I_{1}+I_{2}+I_{3}+I_{4}.
\end{split}
\]
About $I_{1}$, by equation \eqref{eq:mainH1}, Proposition \ref{prp:interp} and inequality \eqref{eq:regularityw}, we have 
\begin{equation}
\label{eq:01L2NOPT}
\begin{split}
I_{1}=|\A(u-\uh,w-\wh)|&\leq C \|\grads(u-\uh)\|_{\Lt}\|\grads(w-\wh)\|_{\Lt}\\
&\leq C h^{2}\left(\|u\|_{\Ht}+\|f\|_{\Lt}\right)\|u-\uh\|_{\Lt}.
\end{split}
\end{equation}
For $I_{2}$, by Lemma \ref{lem:mainfNOPT} and expression \eqref{eq:regularityw},  we have
\begin{equation}
\label{eq:02L2NOPT}
I_{2}=|\A(u,\wh)-\Ahd(\uh,\I(\wh))|\leq Ch\|f\|_{\Lt}\|u-\uh\|_{\Lt}.
\end{equation}
Analogously for $I_{3}$, using Lemma \ref{lem:AhsAhd}, equations \eqref{eq:stabilityv} and \eqref{eq:regularityw} follows that
\begin{equation}
\label{eq:03L2NOPT}
I_{3}=\left|\Ahd(\uh,\I(\wh))-\Ahs(\uh,\I(\wh))\right|\leq C h^{2}\|f\|_{\Lt}\|u-\uh\|_{\Lt}.
\end{equation}
Finally, by Lemma \ref{lem:AAhs} along with estimates \eqref{eq:stabilityv} and \eqref{eq:regularityw} yields the expression
\begin{equation}
\label{eq:04L2NOPT}
I_{4}=\left|\Ahs(\uh,\I(\wh))-\A(\uh,\wh)\right|\leq C h^{2}\|f\|_{\Lt}\|u-\uh\|_{\Lt}.
\end{equation}
Combining \eqref{eq:01L2NOPT}--\eqref{eq:04L2NOPT} with $h>0$ small enough, we have
\[
\|u-\uh\|_{\Lt}^{2}\preceq (h \|f\|_{\Lt}+h^{2}\|f\|_{\Lt}+h^{2}\|u\|_{\Ht})\|u-\uh\|_{\Lt}.
\]
Dividing by $\|u-\uh\|_{\Lt}$ encloses the proof of the theorem.
\end{proof}

\begin{rem}
Notice that the convergence rate in the $L^{2}$-norm is lower than that obtained in FEM. As we will see in the numerical experiment, the errors of the finite volume solutions behave better than the estimates given above. Meanwhile, no known standard method exists to increase the convergence estimate in the $L^{2}$-norm by using Voronoï-Delaunay decomposition in general. 
\end{rem}

The dominant term in the bounds of Theorem \ref{thm:H1-NOPT} is given by Lemma \ref{lem:mainfNOPT}. One way to gain extra power in the convergence rate is to use the SCVT optimizations in the standard decomposition. A quadratic order estimate was reported by \cite{du2005finite}. Still, we will illustrate below that a decrease in regularity in the exact solution is possible using the techniques investigated by \cite{ewing2002accuracy}.

The following lemma is a simplified version of a result given by \cite[Lemma 1, pp.~1682]{du2005finite}, assuming that the density function $\rho$ is equal to $1$.
 
\begin{lem}[\cite{du2005finite}]
\label{lem:h2du}
Let $\VGs=\{\xip,\Vi\}_{i=1}^{N}$ be an almost uniform   Spherical Centroidal Voronoï-Delaunay decomposition (SCVT) on $\S$. Then, for any $w\in \HtHq$, there exists a positive constant $C$, independent of $h$, such that
\[
\left|\int_{\Vi}[w(\x)-\IV(w)(\x)]ds(\x)\right|\leq Ch^{2}\a(\Vi)^{1/2}\|w\|_{H^{2}(\Vi)}.
\]
\end{lem}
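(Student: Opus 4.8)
The plan is to exploit the defining optimality property of the SCVT generator $\xip$ to annihilate the first-order term of a Taylor expansion, so that only a second-order remainder survives. Since $\IV(w)$ equals the nodal value $w(\xip)$ on the cell $\Vi$, the quantity to be estimated is $\int_{\Vi}[w(\x)-w(\xip)]\,ds(\x)$. First I would record the moment identity coming from the SCVT construction: because $\xip=\xci$ minimizes $F(\x)=\int_{\Vi}\|\y-\x\|^{2}ds(\y)$ over $\x\in\S$, the constrained first-order optimality condition (a Lagrange multiplier for $\|\x\|=1$) gives $\a(\Vi)\xip-\int_{\Vi}\y\,ds(\y)=\lambda\xip$ for some scalar $\lambda$, whence
\[
\int_{\Vi}(\x-\xip)\,ds(\x)=c\,\xip,\qquad c:=-\lambda .
\]
That is, the vector moment of $\Vi$ about its generator is purely radial (parallel to the unit normal $\xip$).

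Next I would introduce the radial extension $w^{\Omega}$ of $w$ to the shell $\Oh$, defined exactly as $\Eu$ by $w^{\Omega}(\xp)=w(\Prj(\xp))$, so that $w(\x)-w(\xip)=w^{\Omega}(\x)-w^{\Omega}(\xip)$ for $\x\in\Vi$, and Taylor-expand $w^{\Omega}$ about $\xip$ in $\Rt$ with integral remainder,
\[
w^{\Omega}(\x)-w^{\Omega}(\xip)=\nabla w^{\Omega}(\xip)\cdot(\x-\xip)+R(\x),\quad R(\x)=\int_{0}^{1}(1-t)(\x-\xip)^{\top}\nabla^{2}w^{\Omega}(\xip+t(\x-\xip))(\x-\xip)\,dt.
\]
Integrating over $\Vi$ and inserting the moment identity, the linear contribution equals $c\,\nabla w^{\Omega}(\xip)\cdot\xip$, which vanishes: by Proposition \ref{prp:duextension} one has $\nabla w^{\Omega}(\xip)=\grads w(\xip)$, a tangential vector, hence orthogonal to the normal $\xip$. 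Therefore $\int_{\Vi}[w(\x)-w(\xip)]\,ds(\x)=\int_{\Vi}R(\x)\,ds(\x)$, and only the second-order remainder remains.

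It then remains to bound this remainder. Using $\|\x-\xip\|\le Ch$ (from $\hi\le h$ and the equivalence of Euclidean and geodesic distance on a cell) gives $|R(\x)|\le Ch^{2}\int_{0}^{1}\|\nabla^{2}w^{\Omega}(\xip+t(\x-\xip))\|\,dt$. I would apply the Cauchy--Schwarz inequality over $\Vi$ to extract the factor $\a(\Vi)^{1/2}$, then Jensen's inequality in $t$ and Fubini's theorem, reducing matters to $\int_{0}^{1}\!\int_{\Vi}\|\nabla^{2}w^{\Omega}(\xip+t(\x-\xip))\|^{2}\,ds(\x)\,dt$. For each fixed $t$ the map $\x\mapsto\xip+t(\x-\xip)$ carries $\Vi$ onto a shrunken copy lying in $\Oh$; a change of variables, Proposition \ref{prp:equivnormext}, and the almost-uniformity of Definition \ref{def:aluniform} bound the result by $C\|w\|_{H^{2}(\Vi)}^{2}$, yielding the asserted estimate $Ch^{2}\a(\Vi)^{1/2}\|w\|_{H^{2}(\Vi)}$.

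The main obstacle is precisely this last step: controlling the interior Hessian of the ambient extension $w^{\Omega}$, integrated over the family of scaled copies of $\Vi$, by the surface $H^{2}$-norm of $w$ on $\Vi$ alone. This needs the cone $\{\xip+t(\x-\xip):\x\in\Vi,\ t\in[0,1]\}$ to stay inside the shell $\Oh$ where $w^{\Omega}$ is defined and where the norm equivalence of Proposition \ref{prp:equivnormext} holds — which is guaranteed because the Voronoï cell $\Vi$ is convex and star-shaped with respect to its generator $\xip$ — together with uniform control of the Jacobian factors, kept independent of $h$ by the shape regularity of Definition \ref{def:aluniform}. Everything else is a routine Taylor estimate; the essential geometric input is the SCVT moment identity of the first step (which kills the tangential linear term) and the star-shapedness invoked here.
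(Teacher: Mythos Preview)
The paper does not prove this lemma; it is quoted from \cite{du2005finite} (Lemma~1 there) without argument, so there is no in-paper proof to compare against.  Your overall strategy---derive from the constrained optimality of the SCVT generator that $\int_{\Vi}(\x-\xip)\,ds(\x)$ is a scalar multiple of the outward normal $\xip$, Taylor-expand the radial extension $w^{\Omega}$ about $\xip$, and kill the linear term because $\nabla w^{\Omega}(\xip)=\grads w(\xip)$ is tangential---captures exactly the geometric mechanism behind the estimate and is, in essence, what the cited reference does.

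The gap is in your remainder bound.  You reduce matters to
\[
\int_{0}^{1}\!\int_{\Vi}\bigl\|\nabla^{2}w^{\Omega}\bigl(\xip+t(\x-\xip)\bigr)\bigr\|^{2}\,ds(\x)\,dt
\]
and assert that a change of variables controls it by $C\|w\|_{H^{2}(\Vi)}^{2}$.  This is false for general $w\in H^{2}$.  For fixed $t\in(0,1]$ the map $\x\mapsto\Prj\bigl(\xip+t(\x-\xip)\bigr)$ contracts $\Vi$ toward $\xip$ with surface Jacobian of order $t^{2}$; the change of variables therefore introduces a factor $t^{-2}$ in front of $\|\nabla^{2}w^{\Omega}\|_{L^{2}(\Vi)}^{2}$, and $\int_{0}^{1}t^{-2}\,dt$ diverges.  (Heuristically, if the Hessian of $w$ is concentrated near $\xip$, the pulled-back integrand is sampled with full $\Vi$-measure for every small $t$.)  Shape regularity governs the $h$-dependence of the Jacobian but cannot cure the degeneracy in $t$.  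This is the familiar obstruction to single-point Taylor remainders in the borderline case $H^{2}$ in two dimensions.

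The clean repair is to abandon the explicit integral remainder and invoke a Bramble--Hilbert/scaling argument instead.  After your moment identity, the functional $L(w)=\int_{\Vi}[w-w(\xip)]\,ds$ is bounded on $H^{2}(\Vi)$ (by $H^{2}\hookrightarrow C^{0}$ in two dimensions) and annihilates constants and the two tangential coordinate functions $\x\mapsto e\cdot\x$ with $e\in\mathbb{T}_{\xip,\S}$, since $L(e\cdot\x)=e\cdot(c\,\xip)=0$.  Mapping $\Vi$ to a reference cell of unit size via a chart at $\xip$ (uniformly controlled by Definition~\ref{def:aluniform}) and applying the Bramble--Hilbert lemma to the scaled functional yields $|L(w)|\le Ch^{2}\,\a(\Vi)^{1/2}\|w\|_{H^{2}(\Vi)}$ directly, with no divergent $t$-integral to handle.
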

In light of the lemma above, we have the following result.
\begin{lem}
\label{lem:fh2SCVT}
Let $f\in\HqS$ be a function that satisfies the compatibility condition \eqref{eq:compf} and $\VGs=\{\xip,\Vi\}_{i=1}^{N}$ be an almost uniform Spherical Centroidal Voronoï-Delaunay decomposition (SCVT) on $\S$. Then, for each $w\in \HtHq$, there exists a positive constant $C$, independent of $h$, such that  
\[
\left(f,\IU(w)-\IV(w)\right)\leq Ch^{2}\|f\|_{\Ho}\|w\|_{\Ht},
\]
where $\IU$ and $\IV$ are the interpolation operator on spaces $\U$ and $\Vhs$ respectively.
\end{lem}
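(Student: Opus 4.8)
The plan is to estimate the pairing cell by cell, separating the part of $f$ captured by its Voronoï cell averages $f_{i}$ (defined in \eqref{eq:fi}) from its fluctuation, so that the centroidal-grid cancellation encoded in Lemma \ref{lem:h2du} can be exploited. Writing the inner product as a sum over cells and inserting $f_{i}$ (constant on each $\Vi$), I would start from
\[
(f,\IU(w)-\IV(w))=\sum_{i=1}^{N}f_{i}\int_{\Vi}[\IU(w)-\IV(w)]\,ds(\x)+\sum_{i=1}^{N}\int_{\Vi}(f-f_{i})[\IU(w)-\IV(w)]\,ds(\x),
\]
and treat the two sums separately. The first (average) sum is where the SCVT property must enter; the second (fluctuation) sum is where the $H^{1}$-regularity of $f$ will be spent.

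For the average sum I would further split $\IU(w)-\IV(w)=(\IU(w)-w)+(w-\IV(w))$. The contribution of $w-\IV(w)$ is controlled directly by Lemma \ref{lem:h2du}, which gives $\left|\int_{\Vi}[w-\IV(w)]\,ds\right|\leq Ch^{2}\a(\Vi)^{1/2}\|w\|_{H^{2}(\Vi)}$; combined with the Cauchy–Schwarz bound $|f_{i}|\a(\Vi)^{1/2}=\a(\Vi)^{-1/2}\left|\int_{\Vi}f\right|\leq\|f\|_{L^{2}(\Vi)}$ and a final Cauchy–Schwarz over the disjoint cells, this yields $Ch^{2}\|f\|_{\Lt}\|w\|_{\Ht}$. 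For the contribution of $\IU(w)-w$, I would recognize $\sum_{i}f_{i}\int_{\Vi}(\IU(w)-w)$ as the $\Lt$-pairing of the piecewise-constant function equal to $f_{i}$ on $\Vi$ (whose $\Lt$-norm is bounded by $\|f\|_{\Lt}$) with the interpolation error $\IU(w)-w$, and bound it by $C\|f\|_{\Lt}\|\IU(w)-w\|_{\Lt}\leq Ch^{2}\|f\|_{\Lt}\|w\|_{\Ht}$ via Proposition \ref{prp:interp} with $p=2$.

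For the fluctuation sum I would apply Cauchy–Schwarz on each cell and bound $\|f-f_{i}\|_{L^{2}(\Vi)}\leq Ch\|\grads f\|_{L^{2}(\Vi)}$ by the Poincaré–Wirtinger inequality on a cell of diameter $\simeq h$ — this is exactly where $f\in\Ho$ is used. It then remains to show $\|\IU(w)-\IV(w)\|_{L^{2}(\Vi)}\leq Ch\|w\|_{H^{2}(\omega_{i})}$ on a small patch $\omega_{i}$ of the elements meeting $\Vi$; using $\IV(w)\big|_{\Vi}=w(\xip)$, this reduces to the local interpolation estimate for $w-\IU(w)$ together with a bound for $\|w-w(\xip)\|_{L^{2}(\Vi)}$. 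Pulling back to the planar triangles via $\Prj$ and Proposition \ref{prp:equivnormext}, the latter follows from a Bramble–Hilbert/scaling argument for the functional $w\mapsto w-w(\xip)$, which annihilates constants and is bounded on $\Ht$ thanks to the embedding $\Ht\hookrightarrow C^{0}$ in two dimensions. A last Cauchy–Schwarz over the cells, using the finite overlap of the patches $\omega_{i}$, gives $Ch^{2}\|\grads f\|_{\Lt}\|w\|_{\Ht}$. Adding the two sums and bounding $\|f\|_{\Lt}+\|\grads f\|_{\Lt}\leq C\|f\|_{\Ho}$ completes the estimate.

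The main obstacle I anticipate is the fluctuation term, specifically the point-value estimate $\|w-w(\xip)\|_{L^{2}(\Vi)}\preceq h\|w\|_{H^{2}(\omega_{i})}$: the evaluation $w(\xip)$ is only meaningful because we are in dimension two, and making the bound rigorous on curved geodesic Voronoï cells requires transporting the Bramble–Hilbert argument from the reference planar element through the radial projection while keeping the powers of $h$ and the finite-overlap bookkeeping under control. By contrast, the genuinely SCVT-dependent part of the statement is confined to the single application of Lemma \ref{lem:h2du}; everywhere else the decomposition works for a general almost uniform Voronoï–Delaunay grid, which is precisely why this extra step is what distinguishes the SCVT case.
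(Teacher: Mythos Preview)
Your proposal is correct and rests on the same three ingredients as the paper's proof: the interpolation estimate for $\IU(w)-w$, the Poincar\'e--Wirtinger bound $\|f-f_{i}\|_{L^{2}(\Vi)}\leq Ch\|\grads f\|_{L^{2}(\Vi)}$, and Lemma~\ref{lem:h2du} for the SCVT cancellation. The only difference is the order in which you split. The paper first writes $\IU(w)-\IV(w)=(\IU(w)-w)+(w-\IV(w))$ and then inserts the cell averages $\PV(f)=f_{i}$ only in the second piece, obtaining the three terms $I_{1}=(f,\IU(w)-w)$, $I_{2}=(f-\PV(f),w-\IV(w))$, $I_{3}=(\PV(f),w-\IV(w))$; you instead insert $f_{i}$ first and split $\IU(w)-\IV(w)$ afterwards. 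The two decompositions produce exactly the same terms, just regrouped.

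The payoff of the paper's ordering is that your anticipated ``main obstacle'' disappears: in $I_{2}$ one only needs $\|w-\IV(w)\|_{L^{2}(\S)}\leq Ch\|w\|_{\Ht}$, which is already the second estimate of Proposition~\ref{prp:interp}, so no local Bramble--Hilbert argument for $\|w-w(\xip)\|_{L^{2}(\Vi)}$ is required. Even in your ordering the obstacle is illusory: after Cauchy--Schwarz over the cells, your fluctuation sum is bounded by $\|f-\PV(f)\|_{\Lt}\,\|\IU(w)-\IV(w)\|_{\Lt}$, and the second factor is at most $\|\IU(w)-w\|_{\Lt}+\|w-\IV(w)\|_{\Lt}\leq Ch\|w\|_{\Ht}$ by Proposition~\ref{prp:interp}; no pointwise evaluation or patch bookkeeping is needed.
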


\begin{proof}
By definition of the inner product we have
\[
\begin{split}
\left(f,\IU(w)-\IV(w)\right)&=\sum_{\Tgsi\in\Ths}\int_{\Tgsi}f(\x)\left[\IU(w)(\x)-w(\x)\right]ds(\x)\\
&\quad+\sum_{i=1}^{N}\int_{\Vi}\left[f(\x)-\PV(f)\right]\left[w(\x)-\IV(w)(\x)\right]ds(\x)\\
& \quad+\sum_{i=1}^{N}\int_{\Vi}\PV(f)\left[w(\x)-\IV(w)(\x)\right]ds(\x)\\
& = I_{1}+I_{2}+I_{3},
\end{split}
\]
where $\PV(f)$ denotes the $L^{2}$-projection of the function $f$ on $\Vhs$. For $I_{1}$, by using Cauchy-Schwarz inequality and Proposition \ref{prp:interp}, we obtain
\begin{equation}
\label{eq:L2h2dua1}
\begin{split}
I_{1}&\leq \sum_{\Tgsi\in\Ths}\left(\int_{\Tgsi}|f(\x)|^{2}ds(\x)\right)^{1/2}\left(\int_{\Tgsi}|\IU(w)(\x)-w(\x)|^{2}ds(\x)\right)^{1/2}\\
&\leq Ch^{2}\|f\|_{\Lt}\|w\|_{\Ht}.
\end{split}
\end{equation}
Analogously, for $I_{2}$, 
\begin{equation}
\label{eq:L2h2dua2}
\begin{split}
I_{2}&\leq \sum_{i=1}^{N}\left(\int_{\Vi}|f(\x)-\PV(f)(\x)|^{2}ds(\x)\right)^{1/2}\left(\int_{\Vi}|w(\x)-\IV(w)(\x)|^{2}ds(\x)\right)^{1/2}\\
&\leq Ch\sum_{i=1}^{N}\|\grads f\|_{L^{2}(\Vi)}\|w-\IV(w)\|_{L^{2}(\Vi)}\leq Ch^{2}\| f\|_{\Ho}\|w\|_{\Ht}.
\end{split}
\end{equation}
Finally,  for $I_{3}$, using Lemma \ref{lem:h2du} and Proposition \ref{prp:equivnorm}, we have that
\begin{equation}
\label{eq:L2h2dua3}
\begin{split}
I_{3}&\leq Ch^{2}\sum_{i=1}^{N}|\PV(f)\big|_{\Vi}\a(\Vi)^{1/2}\|w\|_{H^{2}(\Vi)}\\
&\leq Ch^{2}\left(\sum_{i=1}^{N}\a(\Vi)\left|\PV(f)\big|_{\Vi}\right|^{2}\right)^{1/2}\left(\sum_{i=1}^{N}\|w\|_{H^{2}(\Vi)}\right)^{1/2}\\
&\leq Ch^{2}\|f\|_{\Lt}\|w\|_{\Ht}.
\end{split}
\end{equation}
Combining \eqref{eq:L2h2dua1}--\eqref{eq:L2h2dua3} we obtain the result.
\end{proof}

We will show below a subtle modification of the proof given in \cite{du2005finite}, assuming that the exact solution $u\in \HtHq$ and source term $f\in\HqS$.

\begin{thm}
\label{thm:L2SCVT}
Let $\VGs=\{\xip,\Vi\}_{i=1}^{N}$ be an almost uniform  Spherical Centroidal Voronoï-Delaunay decomposition (SCVT) on $\S$. Assume that $f\in\HqS$ and that $u\in \HtHq$ is the unique solution of  \eqref{eq:strongform}. Let $\Fdij$ be the discrete flux defined in \eqref{eq:dflux}, such that the discrete problem \eqref{eq:weakformv} has a unique solution $\uh\in\U$. Then, there exists a positive constant $C$, independent of $h$, such that
\[
\|\eh\|_{\Lt}\leq C h^{2}\left(\|u\|_{\Ht}+\|f\|_{\Ho}\right),
\]
where $\eh=u-\uh$.
\end{thm}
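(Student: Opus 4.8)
The plan is to repeat, almost verbatim, the duality argument that produced the $L^{2}$ bound \eqref{eq:mainL2} in Theorem~\ref{thm:H1-NOPT}, replacing only the single step in which the rate previously degenerated to order $h$ by the sharper SCVT estimate of Lemma~\ref{lem:fh2SCVT}. For the error $\eh=u-\uh\in\HqS$ I would introduce the dual solution $w\in\HtHq$ defined by $\A(v,w)=(v,\eh)$ for all $v\in\HqS$; taking $v=\eh$ gives $\|\eh\|_{\Lt}^{2}=\A(\eh,w)$, and elliptic regularity \eqref{eq:regularity} yields $\|w\|_{\Ht}\leq C\|\eh\|_{\Lt}$. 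Setting $\wh=\IU(w)\in\U$ and inserting it, the same splitting as before decomposes $\|\eh\|_{\Lt}^{2}$ into the four terms
\[
I_{1}=|\A(\eh,w-\wh)|,\quad I_{2}=|\A(u,\wh)-\Ahd(\uh,\I(\wh))|,
\]
\[
I_{3}=|\Ahd(\uh,\I(\wh))-\Ahs(\uh,\I(\wh))|,\quad I_{4}=|\Ahs(\uh,\I(\wh))-\A(\uh,\wh)|.
\]

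Three of these are handled exactly as in Theorem~\ref{thm:H1-NOPT}. For $I_{1}$ I would combine the continuity of $\A(\cdot,\cdot)$, the already-proved $H^{1}$ estimate \eqref{eq:mainH1}, the interpolation bound of Proposition~\ref{prp:interp}, and $\|w\|_{\Ht}\leq C\|\eh\|_{\Lt}$ to obtain $I_{1}\leq Ch^{2}(\|u\|_{\Ht}+\|f\|_{\Lt})\|\eh\|_{\Lt}$. The geometric-perturbation terms $I_{3}$ and $I_{4}$ are controlled by Lemma~\ref{lem:AhsAhd} and Lemma~\ref{lem:AAhs} (with $p=q=2$), together with the stability bound \eqref{eq:stabilityv} for $\uh$ and the regularity of $w$, giving $I_{3}+I_{4}\leq Ch^{2}\|f\|_{\Lt}\|\eh\|_{\Lt}$.

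The decisive term, and the crux of the whole argument, is $I_{2}$. Using the right-hand sides of \eqref{eq:weakform} and \eqref{eq:weakformv}, namely $\A(u,\wh)=(f,\wh)$ and $\Ahd(\uh,\I(\wh))=(f,\I(\wh))$, I would rewrite $I_{2}=|(f,\wh-\I(\wh))|$. The key observation is that, since $\IU(w)$ interpolates $w$ at the Voronoï generators while $\I$ merely broadcasts nodal values onto the piecewise-constant dual space, one has $\I(\wh)=\I(\IU(w))=\IV(w)$; hence $I_{2}=|(f,\IU(w)-\IV(w))|$. This is precisely the quantity controlled by Lemma~\ref{lem:fh2SCVT}, which gives $I_{2}\leq Ch^{2}\|f\|_{\Ho}\|w\|_{\Ht}\leq Ch^{2}\|f\|_{\Ho}\|\eh\|_{\Lt}$. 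It is exactly here that the centroidal (SCVT) structure enters, through Lemma~\ref{lem:h2du}, upgrading the former $O(h)$ bound of Lemma~\ref{lem:mainfNOPT} to $O(h^{2})$; this single substitution is what promotes the estimate from \eqref{eq:mainL2} to the quadratic rate, and it is the only place where both the SCVT property and the extra regularity $f\in\HqS$ are used.

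The honest difficulty has therefore already been absorbed into Lemma~\ref{lem:fh2SCVT}, so the theorem is essentially an assembly; the one point I would be careful to justify explicitly is the identity $\I\circ\IU=\IV$ on nodal data, which is what allows Lemma~\ref{lem:fh2SCVT} to be invoked in place of Lemma~\ref{lem:mainfNOPT}. Collecting the four bounds yields $\|\eh\|_{\Lt}^{2}\leq Ch^{2}(\|u\|_{\Ht}+\|f\|_{\Ho})\|\eh\|_{\Lt}$, and dividing by $\|\eh\|_{\Lt}$ (harmlessly assuming it is nonzero) completes the proof.
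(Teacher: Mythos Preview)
Your proposal is correct and matches the paper's own proof essentially verbatim: the paper also reruns the duality argument of Theorem~\ref{thm:H1-NOPT} and upgrades only the term $I_{2}=|(f,\wh-\I(\wh))|$ via Lemma~\ref{lem:fh2SCVT}. Your explicit justification of the identity $\I\circ\IU=\IV$ is a nice clarification that the paper leaves implicit.
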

\begin{proof}
The proof is analogous to \eqref{eq:mainL2} in Theorem \ref{thm:H1-NOPT}, now  using Lemma \ref{lem:fh2SCVT} into equation \eqref{eq:02L2NOPT} to guarantee an extra power in the term $I_{2}$, \ie, 
\[
I_{2}=\left|\left(f,\wh-\I(\wh)\right)\right|\leq Ch^{2}\|f\|_{\Ho}\|w\|_{\Ht}\leq Ch^{2}\|f\|_{\Ho}\|u-\uh\|_{\Lt},
\]
where $\wh=\IU(w)$. Therefore, the quadratic order is shown and completes the proof of the theorem.
\end{proof}

\subsection{Pointwise error estimates}

In this subsection, we find error estimates in the maximum norm for problem \eqref{eq:strongform}. We shall use the variational formulation \eqref{eq:biformFE} for regularized Green's functions. We consider some properties of these auxiliary functions along with punctual estimates of FEM on surfaces defined by \cite{demlow2009higher,kroner2017approximative}.

\subsubsection{Regularity properties of Green's functions}
The lemma below has fundamental properties of the regularized Green's functions on $\S$. The proof is detailed in \cite[Theorem 4.13, pp.~108]{aubin1998some}.

\begin{lem}[\cite{aubin1998some}]
There exists $G(\x,\y)$, a Green's function for the Laplacian on $\S$ satisfying, for each $u\in \Ct$ and $\x,\y\in\S$ with $\x\neq\y$,
\[
u(\y)=\frac{1}{\a(\S)}\int_{\S}u(\x)ds(\x)-\int_{\S}G(\x,\y)\laps u(\x)ds(\x),
\]
and there exist positive constants $C_{0},C_{1}$ and $C_{2}$ such that
\begin{align*}
|G(\x,\y)|&\leq C_{0}(1+|\ln\d(\x,\y)|),\\
|\grads G(\x,\y)|&\leq C_{1}\frac{1}{|\d(\x,\y)|},\quad |\laps G(\x,\y)|\leq C_{2}\frac{1}{|\d(\x,\y)|^{2}},
\end{align*}
where $\grads$ and $\laps$ denote the tangential gradient and Laplacian acting on a function of $\x$. Finally, $G(\x,\y)$ satisfies
\begin{equation}
\label{eq:compaticondG}
\int_{\S}G(\x,\y)ds(\x)=0.
\end{equation}
\end{lem}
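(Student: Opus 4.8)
The plan is to realize $G(\x,\y)$ as the fundamental solution of $-\laps$ on the closed surface $\S$, normalized to be orthogonal to the constants, and then to read the three bounds off its explicit singular structure. Since $-\laps$ on $\S$ has pure point spectrum with eigenvalues $\lambda_\ell=\ell(\ell+1)$, $\ell\geq 0$, and eigenspaces spanned by the spherical harmonics $Y_\ell^m$ (the value $\lambda_0=0$ corresponding to the constants), I would first set
\[
G(\x,\y)=\sum_{\ell\geq 1}\frac{1}{\ell(\ell+1)}\sum_{m=-\ell}^{\ell}Y_\ell^m(\x)\,\overline{Y_\ell^m(\y)},
\]
deliberately dropping the $\ell=0$ mode. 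Excluding the constant mode forces the compatibility condition \eqref{eq:compaticondG}, while in the distributional sense one verifies $-\laps G(\x,\y)=\delta_\y(\x)-1/\a(\S)$. By the addition theorem the inner sum equals $\tfrac{2\ell+1}{4\pi}P_\ell(\cos\d(\x,\y))$, so $G$ depends on $\x,\y$ only through $\d(\x,\y)$ and the series sums in closed form.

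With $G$ constructed, the representation formula is immediate from Green's second identity on the boundaryless surface $\S$: for $u\in\Ct$,
\[
\int_\S\big(G(\x,\y)\laps u(\x)-u(\x)\laps G(\x,\y)\big)\,ds(\x)=0.
\]
Substituting $\laps G(\x,\y)=-\delta_\y(\x)+1/\a(\S)$ into the second term gives $\int_\S G\,\laps u\,ds=-u(\y)+\tfrac{1}{\a(\S)}\int_\S u\,ds$, which rearranges at once to the stated identity.

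For the pointwise bounds I would use this closed form. A direct computation of the Laplace--Beltrami operator on functions of $\d(\x,\y)$ shows, in the distributional sense, that $\laps\big[-\tfrac{1}{4\pi}\ln(1-\cos\d(\x,\y))\big]=-\delta_\y(\x)+1/\a(\S)$, whence
\[
G(\x,\y)=-\frac{1}{4\pi}\ln\!\big(1-\cos\d(\x,\y)\big)+C,
\]
with $C$ fixed by \eqref{eq:compaticondG}. Writing $1-\cos\d=2\sin^2(\d/2)$ and using $\sin(\d/2)\asymp\d$ for small $\d$, together with the smoothness of $G$ away from the diagonal, gives the logarithmic bound $|G|\leq C_0(1+|\ln\d|)$. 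Differentiating in $\x$ and using that the tangential gradient of $\d(\x,\y)$ is a unit field produces a factor $\cot(\d/2)\asymp 1/\d$, giving $|\grads G|\leq C_1/\d$; a further differentiation produces the $\csc^2(\d/2)\asymp 1/\d^2$ scaling of the second derivatives, hence $|\laps G|\leq C_2/\d^2$. Rotational symmetry of $\S$ lets me take $C_0,C_1,C_2$ independent of $\y$.

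The step I expect to demand the most care is the singularity analysis: cleanly splitting $G$ into its explicit logarithmic part and a bounded smooth remainder, and certifying the derivative estimates uniformly up to the diagonal. On a general surface the closed form is unavailable, and one must instead follow Aubin's parametrix construction --- iterating local logarithmic approximants until the Laplacian of the remainder becomes continuous, then correcting by a smooth term satisfying the solvability condition --- and extract the same bounds from the parametrix. As the statement is precisely \cite[Theorem 4.13]{aubin1998some}, I would ultimately invoke that construction, with the sphere's explicit Green's function serving to make each estimate transparent.
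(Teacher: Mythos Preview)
The paper does not prove this lemma at all: it simply cites \cite[Theorem~4.13, p.~108]{aubin1998some} and moves on. Your proposal therefore goes well beyond what the paper does, and what you outline is correct. The explicit spherical-harmonic kernel, the closed form $G(\x,\y)=-\tfrac{1}{4\pi}\ln(1-\cos\d(\x,\y))+C$, and the derivation of the representation formula from Green's second identity together with $-\laps G=\delta_\y-1/\a(\S)$ are all valid and specific to $\S$, and they make the singular bounds transparent rather than relying on a general parametrix argument as in Aubin.

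One small remark on your last estimate: from your closed form one in fact computes $\laps G(\x,\y)=1/\a(\S)$ for $\x\neq\y$, a constant, so the bound $|\laps G|\leq C_2/\d(\x,\y)^2$ is trivially satisfied on the sphere (it is the full Hessian, not the Laplacian, that genuinely scales like $\csc^2(\d/2)$). This only strengthens your conclusion. Your closing acknowledgment that Aubin's parametrix is the right tool on a general surface, with the sphere serving as an explicit instance, matches the spirit of the citation in the paper.
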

 
From now on, we denote by $\Gyx$ the Green's function $G(\x,\y)$. Further, we consider the notation $\Gyd$ to refer to the discrete Green's function defined as
\begin{equation}
\label{eq:disgreenf}
\Gydx:=\int_{\S}\Gxz\dyz ds(\z),
\end{equation}
where $\dyz$ represents the discrete Dirac delta function at the point $\y$. 

We now present a proposition by Demlow  \cite[Proposition 2.8, pp.~813]{demlow2009higher}, as it will be used further in the analysis.

\begin{prp}[\cite{demlow2009higher}]
\label{prp:demlow-delta}

Consider $\vh\in\U$ and fix $\y\in\Tgsi\subset\S$. Let $\nyti$ be a unit vector on the tangent plane $\TyS$ at $\y$. Then, there exist $\dy$ and $\Edy$, both independent of $\vh$, such that for some positive constant $C$,
\[
\|\dy\|_{\WmpT}+\|\Edy\|_{\WmpT}\leq Ch^{-m-2\left(\frac{p-1}{p}\right)},
\]
for $m\in\{0,1\}$ and $1\leq p\leq \infty$. Further, there exists positive generic constants $C_{0}$  and $C_{1}$, such that 
\begin{align*}
|\vh(\y)|&\leq C_{0}\left|\int_{\Tgsi}\dyx\vh(\x)ds(\x)\right|,\\
\left|\grads \vh(\y)\cdot \nyti\right|&\leq C_{1}\left|\int_{\Tgsi}\vh(\x)\grads\cdot\Edy (\x) ds(\x)\right|,
\end{align*}
where $\Edy=\|\xp\|\Edyp\nyti$, with $\y=\Prj(\yp)$. 
\end{prp}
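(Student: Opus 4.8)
The plan is to reduce the statement to a fixed reference configuration, build the two functionals there by a finite-dimensional duality argument, and then transfer them back to $\Tgsi$ while bookkeeping the Jacobian factors that produce the powers of $h$. The essential point is that a function $\vh\in\U$ restricted to a single cell is, through the radial map, the image of an affine function: if $\vhp\in\Po$ is the planar representative on $\Tgi\in\Th$, then $\vh=\vhp\circ\Prji$ and in particular $\vh(\y)=\vhp(\yp)$ whenever $\y=\Prj(\yp)$ with $\yp\in\Tgi$. Hence it suffices to represent the functionals $\vhp\mapsto\vhp(\yp)$ and $\vhp\mapsto\grad\vhp(\yp)\cdot\hat n$ on the finite-dimensional space $\mathbb{P}_{1}(\Tgi)$, and then rewrite them on $\S$ using Proposition \ref{prp:duextension} to convert planar gradients into tangential ones.

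First I would fix a reference triangle $\widehat T$ of unit size and let $F\colon\widehat T\to\Tgi$ be the affine map, so that $|\det DF|\simeq h^{2}$, $\|DF\|\simeq h$ and $\|DF^{-1}\|\simeq h^{-1}$. On $\widehat T$ the space of linear polynomials is finite-dimensional, so the evaluation functional $\hat p\mapsto \hat p(\hat y)$ is bounded with respect to the $L^{2}(\widehat T)$ norm; by Riesz representation inside that finite-dimensional space there is a unique $\hat\delta\in\mathbb{P}_{1}(\widehat T)$ with $\int_{\widehat T}\hat\delta\,\hat p = \hat p(\hat y)$ for every linear $\hat p$, and $\|\hat\delta\|_{W^{m,p}(\widehat T)}\le C$ uniformly for $\hat y\in\widehat T$. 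Pushing $\hat\delta$ forward by $F$ and then by $\Prj$ defines $\dy$ on $\Tgsi$; the change of variables gives $\vh(\y)=\vhp(\yp)=\int_{\Tgsi}\dyx\vh(\x)ds(\x)$ up to the bounded ratio $1+\Or(h^{2})$ between the spherical and planar area elements, and absorbing that ratio yields the first inequality with its constant $C_{0}$. The scaling $\|\dy\|_{\LpT}\simeq|\det DF|^{-(p-1)/p}\|\hat\delta\|_{L^{p}(\widehat T)}\simeq h^{-2(p-1)/p}$, together with one factor $\|DF^{-1}\|\simeq h^{-1}$ per derivative, produces exactly $\|\dy\|_{\WmpT}\le Ch^{-m-2(p-1)/p}$ for $m\in\{0,1\}$.

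For the tangential-derivative functional I would proceed identically but insert an integration by parts. On $\widehat T$ the functional $\hat p\mapsto\grad\hat p(\hat y)\cdot\hat n$ is bounded on $\mathbb{P}_{1}(\widehat T)$ and, by the same duality, equals $\int_{\widehat T}\widehat\psi\,\hat p$ for some $\widehat\psi$; since this functional annihilates constants we have $\int_{\widehat T}\widehat\psi=0$, so the equation $\grad\cdot\widehat{\boldsymbol\delta}=\widehat\psi$ admits a solution $\widehat{\boldsymbol\delta}$ vanishing on $\partial\widehat T$, whence $\grad\hat p(\hat y)\cdot\hat n=\int_{\widehat T}\hat p\,\grad\cdot\widehat{\boldsymbol\delta}$ with no boundary term and $\|\widehat{\boldsymbol\delta}\|_{W^{m,p}(\widehat T)}\le C$. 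Transferring through $F$ and $\Prj$ and invoking Proposition \ref{prp:duextension} to replace the planar gradient $\grad\vhp$ by the tangential gradient $\grads\vh$ introduces precisely the factor $\|\xp\|$ and the direction $\nyti$, which is why the statement records $\Edy=\|\xp\|\Edyp\nyti$. The same Jacobian bookkeeping gives $\|\Edy\|_{\WmpT}\le Ch^{-m-2(p-1)/p}$ and the second inequality with constant $C_{1}$; summing the two bounds yields the stated estimate.

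The main obstacle I anticipate is not the flat-space dimension argument, which is routine, but the passage through the curved geometry. I must check that pushing the reference functionals forward by the radial projection $\Prj$ preserves the representation identities up to bounded multiplicative perturbations, and in particular that the divergence form $\int \vh\,\grads\cdot\Edy$ survives the pullback of the surface divergence with only lower-order error. This is exactly where Propositions \ref{prp:duextension} and \ref{prp:equivnormext} are needed, to control the derivatives of $\Prj$ and the distortion of the area element by $1+\Or(h^{2})$, and where the vanishing of $\widehat{\boldsymbol\delta}$ on $\partial\widehat T$ must be used to discard the boundary contribution. A secondary technical point is the uniformity of the reference constants as $\hat y$ ranges over all of $\widehat T$, including its boundary, which I would settle by a continuity and compactness argument on the invertible Gram matrix of $\mathbb{P}_{1}(\widehat T)$.
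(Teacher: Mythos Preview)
The paper does not prove this proposition; it is quoted verbatim from Demlow \cite{demlow2009higher} (Proposition~2.8 there) and used as a black box in the subsequent analysis. Your reference-element construction---Riesz representation of point evaluation in $\mathbb{P}_{1}(\widehat T)$, push-forward by the affine map and then by $\Prj$, and Jacobian bookkeeping for the $h$-scaling---is exactly the standard argument and is essentially what Demlow does.

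One simplification you may want for the directional-derivative functional: since $\vhp$ is affine on the planar triangle $\Tgi$, its gradient $\grad\vhp$ is \emph{constant} there. Hence for any smooth $\rho$ compactly supported in the interior of $\Tgi$ with $\int_{\Tgi}\rho=1$ you already have
\[
\grad\vhp(\yp)\cdot\hat n=\int_{\Tgi}\rho\,(\grad\vhp\cdot\hat n)\,ds=-\int_{\Tgi}\vhp\,\grad\cdot(\rho\,\hat n)\,ds,
\]
so $\Edyp$ can be taken to be such a bump $\rho$ directly; there is no need to solve an auxiliary divergence equation $\grad\cdot\widehat{\boldsymbol\delta}=\widehat\psi$ with zero boundary trace. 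This also makes the form $\Edy=\|\xp\|\,\Edyp\,\nyti$ recorded in the statement transparent.
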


Let us now introduce a variational formulation for the discrete Green's functions. Fix $\y\in\S$, then we consider two kinds of regularized Green's functions $\Gydo,\Gyda\in C^{\infty}(\S)$ satisfying the following variational problems:
\begin{subequations}
\begin{align}
\A(v,\Gydo)&=(v,\dy),\quad\mbox{for each }v \in\HqS,\label{eq:weakformGd0}\\
\A(v,\Gyda)&=(v,\grads \cdot \Edy),\quad\mbox{for each }v \in\HqS.\label{eq:weakformGd1}
\end{align}
\end{subequations}
Accordingly, we define $\Gydoh,\Gydah\in\U$ as the solutions for the finite element approximate problems 
\begin{equation}
\label{eq:weakformGh0}
\A(\wh,\Gydoh)=(\wh,\dy),\quad\mbox{for each }\wh\in\U,
\end{equation}
and
\begin{equation}
\label{eq:weakformGh1}
\A(\wh,\Gydah)=(\wh,\grads \cdot \Edy),\quad\mbox{for each }\wh\in\U.
\end{equation}
The finite element approximation $\Gydnh\in\U$ with $n\in\{0,1\}$ is taken to be the unique solution of problem
\begin{equation}
\label{eq:weakformGh}
\A(\wh,\Gydn-\Gydnh)=0,\quad\mbox{for each }\wh\in\U.
\end{equation}
Notice that $\Gydnh$ satisfies \eqref{eq:compaticondG} from the structure of the space $\U$ for $n\in\{0,1\}$. On the other hand, the term $\Gydn-\Gydnh$ satisfies the error estimates in $H^{1}$ and $L^{2}$-norm \cite{demlow2009higher}. These discrete Green's functions appear because we will need some additional \emph{a priori} estimates, which are well studied in the literature by \cite{demlow2009higher,kroner2017approximative,rannacher1982some,schatz1995interior}. 

\begin{lem}
\label{lem:limitGreen}
Let $\Gydn$ be a discrete Green's function and $\Gydnh\in\U$ its finite element approximation with $n\in\{0,1\}$. Then, we have:
\begin{subequations}
\begin{align}
\|\grads(\Gyda-\Gydah)\|_{L^{1}(\S)}&\leq C,\label{eq:G1}\\
\|\grads \Gyda\|_{L^{1}(\S)}+\|\Gydo\|_{W^{2,1}(\S)}&\leq C|\ln h|,\label{eq:G2}\\
\|\grads\Gydoh\|_{\Lt}&\leq C |\ln h|^{1/2}.\label{eq:G3}
\end{align}
\end{subequations}
where $C$ are positive generic constants independent of $h$. Here the factor $|\ln h|^{1/2}$ has order  $\Or(h^{-\eta})$ for $\eta\in(0,1)$.
\end{lem}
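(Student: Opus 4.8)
The plan is to treat the three bounds separately, in each case reducing the estimate for the \emph{discrete} Green's functions to pointwise bounds on the true Green's function $G(\x,\y)$, the scaling of the regularized data $\dy$ and $\Edy$ from Proposition~\ref{prp:demlow-delta}, and the standard surface finite element machinery of \cite{demlow2009higher,rannacher1982some,schatz1998pointwise}. The common starting point is the representation coming from Aubin's lemma: since $\Gydo$ solves \eqref{eq:weakformGd0} with zero average and $-\laps\Gydo=\dy$, one has $\Gydo(\x)=\int_{\S}G(\x,\z)\dyz\,ds(\z)$, and after integrating by parts on the closed surface $\S$, $\Gyda(\x)=-\int_{\S}\grads_{\z}G(\x,\z)\cdot\Edy(\z)\,ds(\z)$. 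Differentiating under the integral sign then transfers the standard kernel bounds $|\grad^{\al}_{\x}\grad^{\beta}_{\z}G(\x,\z)|\preceq\d(\x,\z)^{-(|\al|+|\beta|)}$ (for $|\al|+|\beta|\geq1$, a strengthening of the bounds in Aubin's lemma valid for the smooth sphere) directly onto $\Gydo$ and $\Gyda$.

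For the a priori bound \eqref{eq:G2} I would use a dyadic (annular) decomposition of $\S$ around $\y$. On the far annuli $A_{j}=\{\x:d_{j}/2\leq\d(\x,\y)\leq d_{j}\}$, with $d_{j}=2^{-j}$ ranging from $O(h)$ to $O(1)$, the support of $\dy$ (resp.\ $\Edy$) lies at distance $\approx d_{j}$, so differentiating the representation gives $|\grad^{2}\Gydo|\preceq d_{j}^{-2}$ and $|\grads\Gyda|\preceq d_{j}^{-2}$ there; since $\a(A_{j})\preceq d_{j}^{2}$, each annulus contributes $O(1)$ and summing over the $\approx|\ln h|$ annuli yields the factor $|\ln h|$. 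On the innermost ball $\{\x:\d(\x,\y)\preceq h\}$ of area $\preceq h^{2}$ I would instead use Hölder's inequality with an exponent $p>1$ together with $W^{2,p}$ elliptic regularity on $\S$ and the scaling $\|\dy\|_{\LpT}\preceq h^{-2(p-1)/p}$ from Proposition~\ref{prp:demlow-delta}, which gives only an $O(1)$ contribution; this produces \eqref{eq:G2}.

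The estimate \eqref{eq:G3} is the most direct: the defining relations \eqref{eq:weakformGd0}, \eqref{eq:weakformGh0} and the orthogonality \eqref{eq:weakformGh} show that $\Gydoh$ is the Ritz projection of $\Gydo$, so symmetry, coercivity and continuity of $\A(\cdot,\cdot)$ give $\|\grads\Gydoh\|_{\Lt}\preceq\|\grads\Gydo\|_{\Lt}$. Testing \eqref{eq:weakformGd0} with $v=\Gydo$ yields $\|\grads\Gydo\|_{\Lt}^{2}=(\Gydo,\dy)\leq\|\Gydo\|_{\Li}\|\dy\|_{\Lo}$, and the logarithmic kernel bound $|G(\x,\z)|\preceq 1+|\ln\d(\x,\z)|$ together with $\|\dy\|_{\Lo}\preceq 1$ gives $\|\Gydo\|_{\Li}\preceq|\ln h|$; taking square roots gives \eqref{eq:G3}.

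The remaining bound \eqref{eq:G1} is the delicate one, and I expect it to be the main obstacle, because the claimed order is $O(1)$ with \emph{no} logarithmic factor. Here I would invoke the log-free $W^{1,1}$ quasi-optimality of the Ritz projection for piecewise linear elements (the $W^{1,p}$-stability of \cite{rannacher1982some,demlow2009higher}, valid uniformly in $1\leq p\leq\infty$), which reduces the error to the interpolation bound $\|\grads(\Gyda-\Gydah)\|_{\Lo}\preceq h\|\Gyda\|_{W^{2,1}(\S)}$. The point is then that $\Gyda$ behaves like a first derivative of $G$, so its second derivatives scale like $\d(\cdot,\y)^{-3}$; the same dyadic decomposition as above, now summing $\sum_{j}d_{j}^{-3}\a(A_{j})\approx\sum_{j}d_{j}^{-1}\preceq h^{-1}$ (dominated by the innermost annulus) together with the near-ball Hölder and elliptic estimate using $\|\Edy\|_{W^{1,p}(\Tgsi)}\preceq h^{-1-2(p-1)/p}$, yields $\|\Gyda\|_{W^{2,1}(\S)}\preceq h^{-1}$, so that $h\|\Gyda\|_{W^{2,1}(\S)}\preceq 1$. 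The subtlety I would have to argue carefully is precisely that no spurious $|\ln h|$ is introduced in the $W^{1,1}$-stability step; any such factor would degrade \eqref{eq:G1} to $O(|\ln h|)$ and must be excluded by appealing to the sharp log-free gradient stability of linear finite elements on the quasi-uniform surface triangulation.
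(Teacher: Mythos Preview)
Your proposal is essentially correct, but it differs from the paper in a couple of instructive ways.

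For \eqref{eq:G1} and \eqref{eq:G2} the paper does not give an argument at all: it simply cites \cite[Lemma~3.3]{demlow2009higher} and \cite[Lemma~5.2]{kroner2017approximative}. Your dyadic-annulus sketches for these two bounds are, in spirit, precisely what those references do (via weighted $L^{2}$ norms in the Schatz--Wahlbin tradition rather than a literal annular partition, but the mechanism is the same). Your reduction of \eqref{eq:G1} to a log-free $W^{1,1}$ quasi-optimality of the Ritz projection is a valid high-level outline, and you rightly flag that avoiding the logarithm there is the real work; in practice the cited references achieve this via weighted-norm duality rather than by invoking a stand-alone $W^{1,1}$-stability theorem, so if you wrote this out in full you would likely end up reproducing that argument rather than quoting a ready-made stability result.

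For \eqref{eq:G3} the paper takes a shorter, purely discrete route. Instead of passing through the continuous $\Gydo$ and bounding $\|\Gydo\|_{\Li}$ by kernel estimates as you do, it tests \eqref{eq:weakformGh0} with $\wh=\Gydoh$ directly to get
\[
\|\grads\Gydoh\|_{\Lt}^{2}=\A(\Gydoh,\Gydoh)=(\Gydoh,\dy),
\]
uses Proposition~\ref{prp:demlow-delta} to identify the right-hand side with $\Gydoh(\y)$ (up to a constant), and then applies the \emph{discrete} Sobolev inequality $|\Gydoh(\y)|\leq C|\ln h|^{1/2}\|\grads\Gydoh\|_{\Lt}$ to close. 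This avoids the Ritz-stability step and the continuous $L^{\infty}$ bound on $\Gydo$ altogether. Your argument is correct and yields the same constant order, but the paper's is more self-contained and does not rely on the representation formula for $\Gydo$.
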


\begin{proof}
We omit the details of \eqref{eq:G1} and \eqref{eq:G2}, but we highlight that these proofs are detailed in \cite[Lemma 3.3, pp.~819]{demlow2009higher} and \cite[Lemma 5.2, pp.~10]{kroner2017approximative}. About \eqref{eq:G3}, let $\Gydoh\in\U$ be the finite element approximation of $\Gydo$. From equation   \eqref{eq:weakformGh0} follows that
\[
\|\grads \Gydoh\|^{2}_{\Lt}=\A(\Gydoh,\Gydoh)=(\Gydoh,\dy).
\]
From discrete Sobolev inequality (see \eg, \cite[Lemma 4.9.2, pp.~124]{brenner2007mathematical} or \cite[Lemma 3.12, pp.~527]{kovacs2018maximum}), there exists a positive constant $C$, such that
\begin{equation}
\label{eq:dSo2}
\left|\Gydoh(\y)\right|\leq C|\ln h|^{1/2}\|\grads\Gydoh\|_{\Lt}.
\end{equation}
Therefore, we obtain \eqref{eq:G3}.
\end{proof}

Now, we can show a weak stability condition of approximate solutions of \eqref{eq:FVscheme} in the $\max$-norm. 
\begin{prp}
\label{lem:weaklnhv2}
Assume that $\uh\in\U$ is the unique solution of \eqref{eq:weakformv}. Then, there exists a positive  constant $C$, independent of $h$, such that 
\[
\|\uh\|_{\Li}\leq C|\ln h|^{1/2}\|f\|_{\Lt}.
\]
\end{prp}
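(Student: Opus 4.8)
The plan is to prove the weak stability estimate $\|\uh\|_{\Li}\leq C|\ln h|^{1/2}\|f\|_{\Lt}$ by representing the pointwise value $\uh(\y)$ through the discrete Green's function $\Gydoh$ associated with the regularized delta $\dy$, and then controlling the resulting expression using the known bound \eqref{eq:G3} together with the finite volume equation \eqref{eq:weakformv}. First I would fix $\y\in\Tgsi\subset\S$ and invoke Proposition \ref{prp:demlow-delta} to write $|\uh(\y)|\leq C_{0}|\int_{\Tgsi}\dyx\uh(\x)ds(\x)|$. Recognizing the integral as $(\uh,\dy)$ (supported on $\Tgsi$), I would then use the variational characterization \eqref{eq:weakformGh0} of $\Gydoh$ with test function $\wh=\uh\in\U$ to replace $(\uh,\dy)$ by $\A(\uh,\Gydoh)$.

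The next step is to transfer the finite element bilinear form $\A(\uh,\Gydoh)$ onto the finite volume side. Since $\uh$ solves \eqref{eq:weakformv}, the natural quantity to exploit is $\Ahd(\uh,\I(\Gydoh))=(f,\I(\Gydoh))$. I would therefore decompose
\[
\A(\uh,\Gydoh)=\left[\A(\uh,\Gydoh)-\Ahs(\uh,\I(\Gydoh))\right]+\left[\Ahs(\uh,\I(\Gydoh))-\Ahd(\uh,\I(\Gydoh))\right]+(f,\I(\Gydoh)),
\]
where $\Gydoh$ plays the role of the generic test function $\vh\in\U$. The first bracket is controlled by Lemma \ref{lem:AAhs} and the second by Lemma \ref{lem:AhsAhd}, each giving a factor $Ch^{2}\|\grads\uh\|_{\Lt}\|\grads\Gydoh\|_{\Lt}$; combined with the stability bound \eqref{eq:stabilityv} ($|\uh|_{1,h}\preceq\|f\|_{\Lt}$, equivalently $\|\grads\uh\|_{\Lt}\preceq\|f\|_{\Lt}$ via Proposition \ref{prp:equivnorm}) and \eqref{eq:G3} these two perturbation terms are $Ch^{2}|\ln h|^{1/2}\|f\|_{\Lt}$, hence negligible.

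For the remaining consistency term $(f,\I(\Gydoh))$, I would estimate it by Cauchy-Schwarz together with the $L^{2}$-stability of the transference interpolation, $\|\I(\Gydoh)\|_{\Lt}\preceq\|\Gydoh\|_{\Lt}$; since $\Gydoh$ has zero average one applies a Poincaré inequality to bound $\|\Gydoh\|_{\Lt}\preceq\|\grads\Gydoh\|_{\Lt}\preceq|\ln h|^{1/2}$ using \eqref{eq:G3} once more, yielding $|(f,\I(\Gydoh))|\preceq|\ln h|^{1/2}\|f\|_{\Lt}$. Collecting all three contributions gives $|\uh(\y)|\preceq|\ln h|^{1/2}\|f\|_{\Lt}$, and taking the supremum over $\y\in\S$ delivers the claim.

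The main obstacle I anticipate is the bookkeeping at the transition from $\A$ to $\Ahd$: one must verify that $\Gydoh\in\U$ is a legitimate argument in Lemmas \ref{lem:AAhs} and \ref{lem:AhsAhd} (these are stated for $\vh\in\U$, which $\Gydoh$ indeed satisfies) and that the dominant contribution is the single factor of $|\ln h|^{1/2}$ coming from \eqref{eq:G3}, \emph{not} a higher power. In particular, care is needed so that the perturbation brackets carry the harmless $h^{2}$ prefactor while only the genuine source term $(f,\I(\Gydoh))$ contributes at order $|\ln h|^{1/2}$; any slip in which norm of $\Gydoh$ is used (the $H^{1}$-seminorm versus a higher-order norm) would spuriously inflate the logarithmic power, so the estimate \eqref{eq:G3} must be applied precisely to $\|\grads\Gydoh\|_{\Lt}$ and nowhere stronger.
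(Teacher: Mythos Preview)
Your proposal is correct, but it takes a noticeably longer detour than the paper's argument. After the common opening step $|\uh(\y)|\leq C_0|(\uh,\dy)|=C_0|\A(\uh,\Gydoh)|$, the paper simply bounds $\A(\uh,\Gydoh)$ by continuity,
\[
|\A(\uh,\Gydoh)|\leq C\|\grads\uh\|_{\Lt}\|\grads\Gydoh\|_{\Lt},
\]
and then applies \eqref{eq:G3} and Proposition~\ref{prp:estabilityv} (via Proposition~\ref{prp:equivnorm}) to finish in one line. No passage through the finite volume bilinear forms $\Ahs$, $\Ahd$ or the equation \eqref{eq:weakformv} is needed at all.

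Your route---splitting $\A(\uh,\Gydoh)$ into the two perturbation brackets plus $(f,\I(\Gydoh))$---is valid: Lemmas~\ref{lem:AAhs} and~\ref{lem:AhsAhd} do apply with $\vh=\Gydoh\in\U$, the brackets are indeed $O(h^{2}|\ln h|^{1/2}\|f\|_{\Lt})$, and the bound on $(f,\I(\Gydoh))$ via Cauchy--Schwarz, the norm equivalence $\|\I(\Gydoh)\|_{\Lt}=\|\Gydoh\|_{0,h}\preceq\|\Gydoh\|_{\Lt}$ (Proposition~\ref{prp:equivnorm}), Poincar\'e on $\HqS$, and \eqref{eq:G3} is legitimate. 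What your approach buys is nothing extra here: the additional ingredients (Poincar\'e, stability of $\I$ in $L^2$) are used only to undo the detour, and the final dependence on $|\ln h|^{1/2}$ is the same. The paper's proof is preferable simply because $\A(\uh,\Gydoh)$ already has the right structure for a direct Cauchy--Schwarz bound, so there is no need to invoke the finite volume machinery.
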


\begin{proof}
From Proposition \ref{prp:demlow-delta}, there exists $\dy$ supported in $\Tgsi\in\Ths$ such that $\left|\uh(\y)\right|=\left|(\uh,\dy)\right|=\left|\A(\uh,\Gydoh)\right|$. In virtue of continuity of $\A(\cdot,\cdot)$ and Lemma \ref{lem:limitGreen}, we have 
\[
\left|\uh(\y)\right|=\left|\A\left(\uh,\Gydoh\right)\right|\leq C\|\grads\uh\|_{\Lt}\|\grads\Gydoh\|_{\Lt}\leq C|\ln h|^{1/2}\|\grads\uh\|_{\Lt}.
\]
Finally, we get the desired inequality by applying Propositions \ref{prp:estabilityv} and \ref{prp:equivnorm}.
\end{proof}

We now state and show the main results of this section. 

\begin{thm}
\label{thm:maxH1-NOPT}
Let $\VGs=\{\xip,\Vi\}_{i=1}^{N}$ be an almost uniform Voronoï-Delaunay decomposition on $\S$. Assume that $f\in \Lt$ satisfies \eqref{eq:compf} and the unique weak solution $u$ of \eqref{eq:strongform} belongs to $\Wti\cap\HtHq$. Let $\Fdij$ be the discrete flux defined in \eqref{eq:dflux}, such that the discrete problem \eqref{eq:weakformv} has a unique solution $\uh\in\U$. Then, there exists a positive constant $C$, independent of $h$, such that
\[
\|\eh\|_{\Li}\leq C h |\ln h|^{1/2}\left(\|u\|_{\Ht}+\|f\|_{\Lt}\right)+C h^{2}\|u\|_{\Wti},
\]
where $\eh=u-\uh$.
\end{thm}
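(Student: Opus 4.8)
The plan is to reduce the pointwise estimate to the variational machinery already set up for the regularized Green's functions, in the same spirit as the weak stability bound of Proposition~\ref{lem:weaklnhv2}. First I would split the error through the Lagrange interpolant, writing $\eh = (u-\IU(u)) + (\IU(u)-\uh)$. The first piece is controlled directly by the interpolation estimate of Proposition~\ref{prp:interp} with $k=0$ and $p=\infty$, giving $\|u-\IU(u)\|_{\Li}\leq Ch^{2}\|u\|_{\Wti}$, which accounts for the $h^{2}\|u\|_{\Wti}$ term. It then remains to bound $\|\vah\|_{\Li}$ for $\vah:=\IU(u)-\uh\in\U$.

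To estimate $\vah$ pointwise I would fix $\y\in\Tgsi$ realizing (up to a constant) the supremum of $|\vah|$, and invoke Proposition~\ref{prp:demlow-delta} to obtain $|\vah(\y)|\leq C|(\vah,\dy)|$ with $\dy$ supported in $\Tgsi$. Since $\vah\in\U$, the discrete Green's function identity \eqref{eq:weakformGh0} yields $(\vah,\dy)=\A(\vah,\Gydoh)$, which I would split as $\A(\vah,\Gydoh)=\A(\IU(u)-u,\Gydoh)+\A(\eh,\Gydoh)$. The first summand is handled by continuity of $\A(\cdot,\cdot)$, the $H^{1}$ interpolation bound $\|\grads(u-\IU(u))\|_{\Lt}\leq Ch\|u\|_{\Ht}$, and the logarithmic gradient bound \eqref{eq:G3}, producing $Ch|\ln h|^{1/2}\|u\|_{\Ht}$.

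The heart of the argument is the consistency term $\A(\eh,\Gydoh)$. Using the weak formulation $\A(u,\Gydoh)=(f,\Gydoh)$ together with the finite volume identity $\Ahd(\uh,\I(\Gydoh))=(f,\I(\Gydoh))$ from the scheme \eqref{eq:weakformv}, I would telescope
\begin{align*}
\A(\eh,\Gydoh)&=\big[(f,\Gydoh)-(f,\I(\Gydoh))\big]+\big[\Ahd(\uh,\I(\Gydoh))-\Ahs(\uh,\I(\Gydoh))\big]\\
&\quad+\big[\Ahs(\uh,\I(\Gydoh))-\A(\uh,\Gydoh)\big],
\end{align*}
where the bracket $(f,\I(\Gydoh))-\Ahd(\uh,\I(\Gydoh))$ vanishes identically. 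The three surviving brackets are estimated with $p=q=2$ by Lemma~\ref{lem:mainfNOPT}, Lemma~\ref{lem:AhsAhd}, and Lemma~\ref{lem:AAhs} respectively; combining these with the stability bound $\|\grads\uh\|_{\Lt}\preceq\|f\|_{\Lt}$ (Propositions~\ref{prp:estabilityv} and~\ref{prp:equivnorm}) and again with \eqref{eq:G3}, the first bracket contributes $Ch|\ln h|^{1/2}\|f\|_{\Lt}$ while the remaining two contribute only $Ch^{2}|\ln h|^{1/2}\|f\|_{\Lt}$.

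Collecting all contributions gives $|\vah(\y)|\leq Ch|\ln h|^{1/2}(\|u\|_{\Ht}+\|f\|_{\Lt})$, and since $\y$ was arbitrary the same bound holds for $\|\vah\|_{\Li}$; adding the interpolation term closes the estimate. The main obstacle is that the dominant first bracket is only first order in $h$, because Lemma~\ref{lem:mainfNOPT} loses a power relative to the symmetric perturbation lemmas~\ref{lem:AhsAhd} and~\ref{lem:AAhs}; consequently the logarithmic weight $|\ln h|^{1/2}$ coming from the discrete Green's function bound \eqref{eq:G3} cannot be absorbed and is genuinely the price of using piecewise-linear functions on the primal Delaunay grid. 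Everything else is routine bookkeeping once the Green's-function estimates of Lemma~\ref{lem:limitGreen} and the delta reduction of Proposition~\ref{prp:demlow-delta} are available.
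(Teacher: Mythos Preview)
Your proposal is correct and follows essentially the same route as the paper. The only organizational differences are that the paper first passes through $\Gydo$ and uses the Galerkin orthogonality \eqref{eq:weakformGh} to discard $\A(\IU(u)-\uh,\Gydo-\Gydoh)$ before landing on $\A(\IU(u)-\uh,\Gydoh)$, and then bounds this last term by continuity together with the already-proved $H^{1}$ estimate of Theorem~\ref{thm:H1-NOPT}, whereas you go directly to $\Gydoh$ via \eqref{eq:weakformGh0} and unpack the consistency telescoping (Lemmas~\ref{lem:mainfNOPT}, \ref{lem:AhsAhd}, \ref{lem:AAhs}) inline rather than citing Theorem~\ref{thm:H1-NOPT} as a black box; both paths use the same ingredients and yield the same bound.
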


\begin{proof}
We shall fix $\y\in\S$ and consider a discrete Green's function $\Gydo$ satisfying \eqref{eq:compaticondG} and the problem \eqref{eq:weakformGd0}. We also consider the finite element approximation $\Gydoh\in\U$ satisfying the problem  \eqref{eq:weakformGh}. By Proposition \ref{prp:demlow-delta}, there exists $\dy$ supported in $\Tgsi\in\Ths$, we then have 
\begin{equation}
\label{eq:MmaxH1}
\begin{split}
\left|(u-\uh)(\y)\right|&\leq \left|(u-\IU(u))(\y)\right|+\left|(\IU(u)-\uh)(\y)\right|\\
&\preceq\left|(u-\IU(u))(\y)\right|+\left|\int_{\S}(\IU(u)-\uh)(\x)\delta^{\y}(\x)ds(\x)\right|\\
&\preceq\|u-\IU(u)\|_{\Li}+\left|\A(\IU(u)-\uh,\Gydo)\right|\\
&\preceq \|u-\IU(u)\|_{\Li}+\left|\A\left(\IU(u)-\uh,\Gydo-\Gydoh\right)\right|\\
&\quad+\left|\A\left(\IU(u)-\uh,\Gydoh\right)\right|
\\
&=I_{1}+I_{2}+I_{3}.
\end{split}
\end{equation}
About $I_{1}$, using Proposition \ref{prp:interp} with $p=\infty$, we  have
\begin{equation}
\label{eq:maxHI1}
I_{1}=\|u-\IU(u)\|_{\Li}\leq C h^{2}\|u\|_{\Wti}.
\end{equation}
For $I_{2}$, from \eqref{eq:weakformGh}, we obtain
\begin{equation}
\label{eq:maxHI2}
I_{2}=0.
\end{equation}
Finally, for $I_{3}$, utilizing the continuity of $\A(\cdot,\cdot)$ and Proposition \ref{prp:interp}, Theorem \ref{thm:H1-NOPT} and inequality \eqref{eq:G3} from Lemma  \ref{lem:limitGreen}, we have
\begin{equation}
\label{eq:maxHI3}
\begin{split}
I_{3}&\leq C\|\grads(\IU(u)-u_{h})\|_{\Lt}\|\grads\Gydoh\|_{\Lt}\\
&\leq C\left(\|\grads(u-\IU(u))\|_{\Lt}+\|\grads(u-\uh)\|_{\Lt}\right)\|\grads\Gydoh\|_{\Lt}\\
& \leq Ch|\ln h|^{1/2}\left(\|u\|_{\Ht}+\|f\|_{\Lt}\right).
\end{split}
\end{equation}
Combining \eqref{eq:maxHI1}-\eqref{eq:maxHI3} and \eqref{eq:MmaxH1} for $h>0$ small enough, we find
\[
\left|(u-\uh)(\y)\right|\leq Ch|\ln h|^{1/2}\left(\|u\|_{\Ht}+\|f\|_{\Lt}\right)+Ch^{2}\|u\|_{\Wti}.
\]
Finally, taking the maximum value leads to the desired result.
\end{proof}

Notice that the finite volume scheme \eqref{eq:FVscheme} is included in the estimate of $H^{1}$-norm. Further, the result above is not optimal concerning the regularity required of the exact solution \cite{ewing2002accuracy}. This excessive regularity can be removed as follows: put the restriction in the weak solution $u$ of \eqref{eq:strongform}, belonging to $\WtiHq$. Next, estimate a $max$-norm for the tangential gradient of the solutions by using Propositions \ref{prp:interp}, \ref{prp:demlow-delta}, and Lemma \ref{lem:limitGreen}. Finally, compute the error estimates of approximate solutions in the $\max$-norm. 

In order to prove that, the following result provides a pointwise error estimate for the tangential gradient of the approximate solution. 

\begin{thm}
\label{thm:W1-NOPT}
Let $\VGs=\{\xip,\Vi\}_{i=1}^{N}$ be an almost uniform Voronoï-Delaunay  decomposition on $\S$. Assume that $f\in\Li$ satisfies \eqref{eq:compf} and that the unique weak solution $u$ of \eqref{eq:strongform} belongs to $\WtiHq$. Let $\Fdij$ be a discrete flux \eqref{eq:dflux}, such that the discrete problem \eqref{eq:weakformv} has a unique solution $\uh\in\U$. Then, there exist positive constants $C$ and $h_{0}$ independent of $u$, such that for $0<h\leq h_{0}<1$,
\[
\|\grads\eh\|_{\Li}\leq C h |\ln h|\left(\|u\|_{\Wti}+\|f\|_{\Li}\right),
\]
where $\eh=u-\uh$.
\end{thm}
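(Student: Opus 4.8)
The plan is to mimic the structure of the maximum-norm estimate for the solution (Theorem \ref{thm:maxH1-NOPT}), but now working at the level of the tangential gradient, using the second regularized Green's function $\Gyda$ (the one paired with $\grads\cdot\Edy$ in \eqref{eq:weakformGd1}) instead of $\Gydo$. Fix $\y\in\Tgsi\subset\S$ and a unit tangent vector $\nyti$. First I would split
\[
\grads(u-\uh)(\y)\cdot\nyti = \grads(u-\IU(u))(\y)\cdot\nyti + \grads(\IU(u)-\uh)(\y)\cdot\nyti,
\]
bounding the first piece by $\|u-\IU(u)\|_{\Woi}\preceq Ch\|u\|_{\Wti}$ via Proposition \ref{prp:interp} with $k=1,p=\infty$. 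For the second, discrete, piece I would apply the gradient part of Proposition \ref{prp:demlow-delta} to write $|\grads(\IU(u)-\uh)(\y)\cdot\nyti|\preceq |\int_{\Tgsi}(\IU(u)-\uh)(\x)\,\grads\cdot\Edyx\,ds(\x)|$, and then recognize this integral (up to extending the integration to all of $\S$, since $\Edy$ is supported in $\Tgsi$) as $\A(\IU(u)-\uh,\Gyda)$ by the defining problem \eqref{eq:weakformGd1}.

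Next I would insert the finite element approximation $\Gydah$ and decompose
\[
\A(\IU(u)-\uh,\Gyda)=\A(\IU(u)-\uh,\Gyda-\Gydah)+\A(\IU(u)-\uh,\Gydah),
\]
calling these $I_2$ and $I_3$. For $I_3$, the Galerkin orthogonality \eqref{eq:weakformGh} lets me replace $\Gydah$ by $\Gyda$ against the discrete factor, or conversely I would bound it by Hölder with the $L^1$--$L^\infty$ pairing: $|I_3|\preceq \|\grads(\IU(u)-\uh)\|_{\Li}\,\|\grads\Gydah\|_{\Lo}$, and estimate $\|\grads\Gydah\|_{\Lo}\preceq \|\grads\Gyda\|_{\Lo}+\|\grads(\Gyda-\Gydah)\|_{\Lo}\preceq C|\ln h|$ using both \eqref{eq:G1} and \eqref{eq:G2} of Lemma \ref{lem:limitGreen}. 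The term $I_2$ is the genuinely delicate one: I would pair $\grads(\Gyda-\Gydah)$ (controlled in $L^1$ by $C$ via \eqref{eq:G1}) against $\grads(\IU(u)-\uh)$ measured in $L^\infty$, so $|I_2|\preceq \|\grads(\IU(u)-\uh)\|_{\Li}\cdot C$.

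Both $I_2$ and $I_3$ thus reintroduce the quantity $M:=\|\grads(\IU(u)-\uh)\|_{\Li}$ that I am trying to bound, so the heart of the argument is an absorption/bootstrap: I expect an inequality of the schematic form $M \preceq C h|\ln h|\,(\|u\|_{\Wti}+\|f\|_{\Li}) + \epsilon\, M$, or more likely a weak-type bound where the $|\ln h|$ factor multiplies $M$ itself and must be controlled by an inverse estimate (Lemma \ref{lem:propinverse}) together with the already-established $H^1$ and $L^\infty$ bounds from Theorem \ref{thm:maxH1-NOPT} and Proposition \ref{lem:weaklnhv2}. The main obstacle will be making this circular dependence rigorous: the naive estimates leave $\|\grads(\IU(u)-\uh)\|_{\Li}$ on both sides with an $|\ln h|$ coefficient that cannot simply be absorbed for $h$ small, so I would follow the Rannacher--Scott / Schatz weighted-norm technique, introducing a dyadic decomposition of $\S$ around $\y$ with smooth weight functions and localized (interior) Green's-function estimates, to break the logarithm and close the bound with the single factor $h|\ln h|$. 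The finite volume consistency errors $I_2,I_3$ analogous to Lemmas \ref{lem:AhsAhd}, \ref{lem:AAhs}, and \ref{lem:mainfNOPT} must also be inserted as in \eqref{eq:MmaxH1}, each contributing an $O(h|\ln h|)$ perturbation after the Green's-function $L^1$ bounds are applied; controlling these geometric perturbation terms uniformly in the maximum norm, rather than in $L^2$, is where the extra logarithmic factor in the final estimate originates.
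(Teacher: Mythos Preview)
Your decomposition goes astray at the step where you split $\A(\IU(u)-\uh,\Gyda)$ into your $I_2$ and $I_3$. First, your $I_2=\A(\IU(u)-\uh,\Gyda-\Gydah)$ is not ``the genuinely delicate one'': it is exactly zero, since $\IU(u)-\uh\in\U$ and \eqref{eq:weakformGh} gives Galerkin orthogonality. Second, and more seriously, your treatment of $I_3=\A(\IU(u)-\uh,\Gydah)$ via the $L^\infty$--$L^1$ H\"older bound produces $|\ln h|\cdot M$ on the right with $M=\|\grads(\IU(u)-\uh)\|_{\Li}$, and you correctly see this cannot be absorbed. But the remedy is not Rannacher--Scott weighted norms or dyadic decompositions; the paper avoids this circularity by a much more elementary device.

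The key move you are missing is to split $\IU(u)-\uh=(\IU(u)-u)+(u-\uh)$ \emph{inside} the bilinear form, giving
\[
\A(\IU(u)-\uh,\Gydah)=\A(\IU(u)-u,\Gydah)+\A(u-\uh,\Gydah).
\]
The first term is bounded by $\|\grads(u-\IU(u))\|_{\Li}\|\grads\Gydah\|_{\Lo}\leq Ch|\ln h|\,\|u\|_{\Wti}$, cleanly. For the second term one does \emph{not} use H\"older directly; instead one exploits that $u$ solves \eqref{eq:weakform} and $\uh$ solves \eqref{eq:weakformv}, inserting and subtracting $\Ahd(\uh,\I(\Gydah))$ and $\Ahs(\uh,\I(\Gydah))$ exactly as you allude to at the end of your proposal. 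Lemmas~\ref{lem:mainfNOPT}, \ref{lem:AhsAhd}, \ref{lem:AAhs} with the $L^\infty$--$L^1$ pairing then yield $Ch|\ln h|\|f\|_{\Li}$ from the source term and $Ch^{2}|\ln h|\,\|\grads\uh\|_{\Li}$ from the geometric perturbation terms. Writing $\|\grads\uh\|_{\Li}\leq\|\grads(u-\uh)\|_{\Li}+\|\grads u\|_{\Li}$, the circular piece now carries the coefficient $Ch^{2}|\ln h|$, which \emph{does} tend to zero and can be absorbed into the left-hand side for $h\le h_0$. That is the whole mechanism; no localized Green's-function machinery is required.
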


\begin{proof}
We will proceed via a duality argument: we fix $\y\in\S$ and consider $\nyti\in\TyS$ a tangent unit vector, and from Proposition \ref{prp:demlow-delta}, there exists $\Edy$ supported in $\Tgsi$. Let $\Gyda$ be a discrete Green's function satisfying \eqref{eq:compaticondG} and the variational problem \eqref{eq:weakformGd1}. Also, we consider the finite element approximation $\Gydah$ as the unique solution of \eqref{eq:weakformGh}. From the triangular inequality, we have
\begin{equation}
\label{eq:MW1}
\begin{split}
\left|\grads(u-\uh)(\y)\cdot\nyti\right|&\leq \left|\grads(u-\IU(u))(\y)\cdot\nyti\right|\\
&\quad+\left|\grads(\IU(u)-\uh)(\y)\cdot\nyti\right|\\
&\preceq\|\grads(u-\IU(u))\|_{\Li}\\
&\quad+\left|\int_{\S}(\IU(u)-\uh)(\x)\grads\cdot\Edy(\x) ds(\x)\right|\\
&\preceq \|\grads(u-\IU(u))\|_{\Li}+\left|\A(\IU(u)-\uh,\Gyda)\right|\\
&\preceq \|\grads(u-\IU(u))\|_{\Li}+\left|\A(\IU(u)-\uh,\Gyda-\Gydah)\right|\\
&\quad +\left|\A(\IU(u)-u,\Gydah)\right|+\left|\A(u-\uh,\Gydah)\right|\\
& = I_{1}+I_{2}+I_{3}+I_{4}.
\end{split}
\end{equation}

About $I_{1}$, applying Proposition \ref{prp:interp}, we have
\begin{equation}
\label{eq:W1I1}
I_{1}=\|\grads(u-\IU(u))\|_{\Li}\leq Ch\|u\|_{\Wti}.
\end{equation}

For $I_{2}$, from \eqref{eq:weakformGh}, we obtain 
\begin{equation}
\label{eq:W1I2}
I_{2}=\left|\A(\IU(u)-\uh,\Gyda-\Gydah)\right|=0.
\end{equation}

For $I_{3}$, using the continuity of $\A(\cdot,\cdot)$, Proposition \ref{prp:interp}, the triangular inequality and equations \eqref{eq:G2} and \eqref{eq:G1} from Lemma \ref{lem:limitGreen}, we have
\begin{equation}
\label{eq:W1I3}
\begin{split}
I_{3}&\leq C\|\grads(u-\IU(u))\|_{\Li}\|\grads\Gydah\|_{L^{1}(\S)}\\
&\leq C\|\grads(u-\IU(u))\|_{\Li}\left(\|\grads(\Gyda-\Gydah)\|_{L^{1}(\S)}+\|\grads\Gyda\|_{L^{1}(\S)}\right)\\
&\leq Ch(1+|\ln h|)\|u\|_{\Wti}.
\end{split}
\end{equation}

Now about $I_{4}$, by using the linearity of $\A(\cdot,\cdot)$ and adding up and subtracting the total fluxes $\Ahd(\cdot,\cdot)$ and $\Ahs(\cdot,\cdot)$, we obtain
\begin{equation}
\label{eq:W1I4}
\begin{split}
I_{4}&\leq \left|\A(u,\Gydah)-\Ahd(\uh,\I(\Gydah))\right|+\left|\Ahd(\uh,\I(\Gydah))-\Ahs(\uh,\I(\Gydah))\right|\\
&\quad +\left|\Ahs(\uh,\I(\Gydah))-\A(\uh,\Gydah)\right|\\
&= I_{4,1}+I_{4,2}+I_{4,3}.
\end{split}
\end{equation}
For $I_{4,1}$, applying Lemmas \ref{lem:mainfNOPT} and \ref{lem:limitGreen} for $f\in\Li$ and $\Gydah\in\U$, we find that
\begin{equation}
\label{eq:W1I41}
\begin{split}
I_{4,1}&=\left|(f,\Gydah-\I(\Gydah))\right| \leq Ch\|f\|_{\Li}\|\grads\Gydah\|_{\Lo}\\
&\leq Ch(1+|\ln h|)\|f\|_{\Li}.
\end{split}
\end{equation}
For $I_{4,2}$, by using Lemma \ref{lem:AhsAhd} and collecting the inequalities \eqref{eq:G1} and \eqref{eq:G2} from Lemma \ref{lem:limitGreen}, we arrive at
\begin{equation}
\label{eq:W1I42}
\begin{split}
I_{4,2}\leq Ch^{2}\|\grads \uh\|_{\Li}\|\grads\Gydah\|_{\Lo}\leq Ch^{2}(1+|\ln h|)\|\grads \uh\|_{\Li}.
\end{split}
\end{equation}
Similarly, for $I_{4,3}$, from Lemma \ref{lem:AAhs} and Lemma \ref{lem:limitGreen}, we obtain
\begin{equation}
\label{eq:W1I43}
I_{4,3}\leq Ch^{2}\|\grads\uh\|_{\Li}\|\grads\Gydah\|_{\Lo}\leq Ch^{2}(1+|\ln h|)\|\grads\uh\|_{\Li}.
\end{equation}

Notice that from triangle inequality, we obtain
\begin{equation}
\label{eq:W1Iprp}
\|\grads \uh\|_{\Li}\leq \|\grads(u-\uh)\|_{\Li}+\|\grads u\|_{\Li}.
\end{equation}
Thus, gathering all the estimates \eqref{eq:W1I41}-\eqref{eq:W1Iprp} into \eqref{eq:W1I4}, follows that
\begin{equation}
\label{eq:W1I4m}
I_{4}\leq Ch(1+|\ln h|)\|f\|_{\Li}+Ch^{2}(1+|\ln h|)\left(\|\grads(u-\uh)\|_{\Li}+\|\grads u\|_{\Li}\right).
\end{equation}
Finally, combining \eqref{eq:W1I4m} with \eqref{eq:MW1}--\eqref{eq:W1I3}, for $h_{0}\in\Rp$, such that $0<h\leq h_{0}<1$ and by applying the maximum value, we find
\[
\|\grads(u-\uh)\|_{\Li}\leq Ch|\ln h|\left(\|u\|_{\Wti}+\|f\|_{\Li}\right),
\]
which leads to the desired result.
\end{proof}

Now, we show error estimates for approximate solution in $\max$-norm.

\begin{thm}
\label{thm:max-NOPT}
Under the assumptions of Theorem \ref{thm:W1-NOPT}. Then, there exist positive constants $C$ and $h_{0}$ independent of $u$, such that for $0<h\leq h_{0}<1$,
\[
\|\eh\|_{\Li}\leq C h |\ln h|\left(\|u\|_{\Wti}+\|f\|_{\Li}\right),
\]
where $\eh=u-\uh$.
\end{thm}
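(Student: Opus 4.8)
The plan is to rerun the Green's-function duality argument of Theorem \ref{thm:maxH1-NOPT}, but to replace the $L^{2}$--$L^{2}$ bound on the discrete Green's function by the $L^{\infty}$--$L^{1}$ pairing used in Theorem \ref{thm:W1-NOPT}, so that the weaker regularity classes $\Wti$ and $\Li$ appear in place of $\Ht$ and $\Lt$. First I would fix $\y\in\S$ and split $(u-\uh)(\y)=(u-\IU(u))(\y)+(\IU(u)-\uh)(\y)$. The first term is controlled by $\|u-\IU(u)\|_{\Li}\le Ch^{2}\|u\|_{\Wti}$ through Proposition \ref{prp:interp} with $p=\infty$ and $k=0$. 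For the second term, since $\IU(u)-\uh\in\U$, I would invoke the delta representation of Proposition \ref{prp:demlow-delta} together with \eqref{eq:weakformGd0} to write $|(\IU(u)-\uh)(\y)|\preceq|\A(\IU(u)-\uh,\Gydo)|$, then insert the finite element Green's function $\Gydoh$ and use the Galerkin identity \eqref{eq:weakformGh} to annihilate the $\Gydo-\Gydoh$ contribution.

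What remains is $\A(\IU(u)-\uh,\Gydoh)$, which I would decompose exactly as in Theorem \ref{thm:W1-NOPT} into $\A(\IU(u)-u,\Gydoh)+\A(u-\uh,\Gydoh)=:I_{3}+I_{4}$. The term $I_{3}$ is handled by continuity of $\A(\cdot,\cdot)$ in the $L^{\infty}$--$L^{1}$ pairing and Proposition \ref{prp:interp} ($k=1$, $p=\infty$), giving $I_{3}\le C\|\grads(u-\IU(u))\|_{\Li}\|\grads\Gydoh\|_{\Lo}\le Ch\|u\|_{\Wti}\|\grads\Gydoh\|_{\Lo}$. For $I_{4}$ I would add and subtract the total fluxes $\Ahd$ and $\Ahs$, writing $I_{4}\le I_{4,1}+I_{4,2}+I_{4,3}$; using $\A(u,\Gydoh)=(f,\Gydoh)$ from \eqref{eq:weakform} and $\Ahd(\uh,\I(\Gydoh))=(f,\I(\Gydoh))$ from \eqref{eq:weakformv}, the first piece becomes $I_{4,1}=|(f,\Gydoh-\I(\Gydoh))|\le Ch\|f\|_{\Li}\|\grads\Gydoh\|_{\Lo}$ by Lemma \ref{lem:mainfNOPT}, while Lemmas \ref{lem:AhsAhd} and \ref{lem:AAhs} bound $I_{4,2}$ and $I_{4,3}$ each by $Ch^{2}\|\grads\uh\|_{\Li}\|\grads\Gydoh\|_{\Lo}$.

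Two auxiliary estimates then close the argument. First, $\|\grads\Gydoh\|_{\Lo}$ must cost only a logarithm; since $\S$ has finite measure, Cauchy--Schwarz together with \eqref{eq:G3} gives $\|\grads\Gydoh\|_{\Lo}\le C\|\grads\Gydoh\|_{\Lt}\le C|\ln h|^{1/2}\le C|\ln h|$, playing the role that \eqref{eq:G1}--\eqref{eq:G2} play for $\Gydah$ in Theorem \ref{thm:W1-NOPT}. Second, the factor $\|\grads\uh\|_{\Li}$ appearing in $I_{4,2}$ and $I_{4,3}$ is controlled by the triangle inequality and the gradient estimate just established, namely $\|\grads\uh\|_{\Li}\le\|\grads(u-\uh)\|_{\Li}+\|\grads u\|_{\Li}\le C(\|u\|_{\Wti}+\|f\|_{\Li})$ for $0<h\le h_{0}$, using Theorem \ref{thm:W1-NOPT}. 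Consequently $I_{4,2}$ and $I_{4,3}$ are of higher order $O(h^{2}|\ln h|)$, and the dominant terms $I_{3}$ and $I_{4,1}$ give $|(u-\uh)(\y)|\le Ch|\ln h|(\|u\|_{\Wti}+\|f\|_{\Li})$; taking the supremum over $\y\in\S$ yields the claim.

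I expect the main obstacle to lie in the correct bookkeeping of the logarithmic factor: one must confirm that the pointwise-value Green's function $\Gydo$ enjoys the same logarithmic $H^{1}$ control as its directional-derivative counterpart $\Gyda$, so that $\|\grads\Gydoh\|_{\Lo}$ costs at most $|\ln h|$ and no worse, and that the quadratic-in-$h$ perturbation terms from the finite volume scheme are genuinely subdominant once $\|\grads\uh\|_{\Li}$ has been absorbed via Theorem \ref{thm:W1-NOPT}. Beyond this, the proof is essentially a faithful transcription of the gradient estimate, with the Dirac functional $\dy$ replacing its derivative analogue $\Edy$.
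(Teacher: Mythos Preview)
Your proposal is correct and follows essentially the same route as the paper: the same splitting into $I_{1},\dots,I_{4}$, the same use of Galerkin orthogonality for $I_{2}$, the same three-term flux decomposition of $I_{4}$, and the same appeal to Theorem \ref{thm:W1-NOPT} to absorb $\|\grads\uh\|_{\Li}$. The only cosmetic difference is that for $I_{3}$ the paper pairs in $L^{2}$--$L^{2}$ and invokes \eqref{eq:G3} directly to get the factor $|\ln h|^{1/2}$, whereas you pair in $L^{\infty}$--$L^{1}$ and then recover the same $|\ln h|^{1/2}$ for $\|\grads\Gydoh\|_{\Lo}$ via Cauchy--Schwarz; both lead to the stated bound.
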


\begin{proof}
We proceed similarly to Theorem \ref{thm:W1-NOPT}. We fix $\y\in\S$, and from Proposition \ref{prp:demlow-delta}, there exists a smooth function $\dy$ supported in $\Tgsi\in\Ths$. Let $\Gydo$ be a discrete Green's function satisfying \eqref{eq:compaticondG} and the variational problem \eqref{eq:weakformGd0}. We also consider the finite element approximation $\Gydoh$ as a unique solution of the problem \eqref{eq:weakformGh}. By using triangular inequality, we have
\begin{equation}
\label{eq:maxM}
\begin{split}
\left|(u-\uh)(\y)\right|&\leq \left|(u-\IU(u))(\y)\right|+\left|(\IU(u)-\uh)(\y)\right|\\
& \preceq \|u-\IU(u)\|_{\Li}+\left|\int_{\S}(\IU(u)-\uh)(\x)\dy(\x)ds(\x)\right|\\
& \preceq \|u-\IU(u)\|_{\Li}+\left|\A(\IU(u)-\uh,\Gydo)\right|\\
& \preceq \|u-\IU(u)\|_{\Li}+\left|\A(\IU(u)-\uh,\Gydo-\Gydoh)\right|\\
& \quad +\left|\A(\IU(u)-u,\Gydoh)\right|+\left|\A(u-\uh,\Gydoh)\right|\\
& = I_{1}+I_{2}+I_{3}+I_{4}.
\end{split}
\end{equation}
For $I_{1}$, from Proposition \ref{prp:interp}, we have
\begin{equation}
\label{eq:maxI1}
I_{1}=\|u-\IU(u)\|_{\Li}\leq C h^{2}\|u\|_{\Wti}.
\end{equation}
For $I_{2}$, from \eqref{eq:weakformGh}, we have
\begin{equation}
\label{eq:maxI2}
I_{2}=\A(\IU(u)-\uh,\Gydo-\Gydoh)=0. 
\end{equation}
For $I_{3}$, using the continuity of $\A(\cdot,\cdot)$, Proposition \eqref{prp:interp} and Lemma \ref{lem:limitGreen} with \eqref{eq:G3}, we obtain
\begin{equation}
\label{eq:maxI3}
I_{3}=\left|\A(\IU(u)-u,\Gydoh)\right|\leq Ch|\ln h|^{1/2}\|u\|_{\Wti}.
\end{equation}
About $I_{4}$, analogously to Theorem \ref{thm:W1-NOPT}, by using the linearity of $\A(\cdot,\cdot)$ and the total fluxes $\Ahs(\cdot,\cdot)$ and $\Ahd(\cdot,\cdot)$, we have
\begin{equation}
\label{eq:maxI4}
\begin{split}
I_{4}&\leq \left|\A(u,\Gydoh)-\Ahd(\uh,\I(\Gydoh))\right|+\left|\Ahd(\uh,\I(\Gydoh))-\Ahs(\uh,\I(\Gydoh))\right|\\
& \quad+\left|\Ahs(\uh,\I(\Gydoh))-\A(\uh,\Gydoh)\right|\\
& = I_{4,1}+I_{4,2}+I_{4,3}.
\end{split}
\end{equation}
For $I_{4,1}$, by using Lemmas \ref{lem:mainfNOPT} and \ref{lem:limitGreen}, we find
\begin{equation}
\label{eq:maxI41}
I_{4,1}=\left|\A(u,\Gydoh)-\Ahd(\uh,\I(\Gydoh))\right|=\left|(f,\Gydoh-\I(\Gydoh))\right|\leq Ch|\ln h|\|f\|_{\Li}.
\end{equation}
For $I_{4,2}$, Lemma \ref{lem:AhsAhd} yields
\[
I_{4,2}=\left|\Ahd(\uh,\I(\Gydoh))-\Ahs(\uh,\I(\Gydoh))\right|\leq Ch^{2}|\ln h|\|\grads \uh\|_{\Li}.
\]
Applying Theorem \ref{thm:W1-NOPT}, there exists $h_{0}\in\Rp$ such that for all $0<h\leq h_{0}$, we have
\[
\begin{split}
\|\grads \uh\|_{\Li}\leq Ch|\ln h|\left(\|u\|_{\Wti}+\|f\|_{\Li}\right)+\|\grads u\|_{\Li}.
\end{split}
\]
Then
\begin{equation}
\label{eq:maxI42}
I_{4,2}\leq Ch^{2}|\ln h|\|u\|_{\Wti}.
\end{equation}
As for $I_{4,3}$, using Lemma \ref{lem:AhsAhd}, we have
\begin{equation}
\label{eq:maxI43}
I_{4,3}\leq Ch^{2}|\ln h|\|u\|_{\Wti}.
\end{equation}
Consequently, for $h>0$ small enough and gathering \eqref{eq:maxI4} with \eqref{eq:maxI41}--\eqref{eq:maxI43}, we have
\begin{equation}
\label{eq:maxI4m}
I_{4}\leq Ch|\ln h|\|f\|_{\Li}.
\end{equation}
Finally, combining \eqref{eq:maxM} with \eqref{eq:maxI1}--\eqref{eq:maxI3} and \eqref{eq:maxI4m}, we find
\[
|(u-\uh)(\y)|\leq C h|\ln h|\left(\|u\|_{\Wti}+\|f\|_{\Li}\right).
\]
Therefore, applying the maximum value yields the expected result.
\end{proof}

To end this section, we can get an additional estimate using a Voronoï-Delaunay decomposition SCVT.

\begin{thm}
Let $\VGs=\{\xip,\Vi\}_{i=1}^{N}$ be an almost uniform  Spherical Centroidal Voronoï-Delaunay decomposition (SCVT) on $\S$. Assume that $f\in\HqS$ and that $u\in \Wti\cap\HtHq$ is the unique solution of \eqref{eq:strongform}. Let $\Fdij$ be the discrete flux defined in \eqref{eq:dflux}, such that the discrete problem \eqref{eq:weakformv} has a unique solution $\uh\in\U$. Then, there exist positive constants $C$ and $h_{0}$, independent of $u$, such that for $0<h\leq h_{0}<1$,
\[
\|\eh\|_{\Li}\leq Ch^{2}\|u\|_{\Wti}+ Ch\left(\|u\|_{\Ht}+\|f\|_{\Ho}\right),
\]
where $\eh=u-\uh$.
\end{thm}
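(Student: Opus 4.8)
The plan is to follow the duality structure of the proof of Theorem~\ref{thm:max-NOPT}, exploiting the SCVT superconvergence Lemma~\ref{lem:fh2SCVT} at the single point where the source term enters, and refining the interpolation contribution so that no logarithmic factor survives. Fixing $\y\in\S$, I would invoke Proposition~\ref{prp:demlow-delta} to obtain the regularized delta $\dy$ supported in one triangle $\Tgsi$, take the smooth discrete Green's function $\Gydo$ solving \eqref{eq:weakformGd0} together with its Galerkin approximation $\Gydoh\in\U$ from \eqref{eq:weakformGh}, and split $|(u-\uh)(\y)|\preceq I_1+I_2+I_3+I_4$ with $I_1=\|u-\IU(u)\|_{\Li}$, $I_2=|\A(\IU(u)-\uh,\Gydo-\Gydoh)|$, $I_3=|\A(\IU(u)-u,\Gydoh)|$ and $I_4=|\A(u-\uh,\Gydoh)|$, exactly as in Theorem~\ref{thm:max-NOPT}. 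Here $I_1\leq Ch^2\|u\|_{\Wti}$ by Proposition~\ref{prp:interp} with $p=\infty$, while $I_2=0$ by the Galerkin orthogonality \eqref{eq:weakformGh}.

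The first refinement concerns $I_3$. Instead of absorbing $\|\grads\Gydoh\|_{\Lt}\leq C|\ln h|^{1/2}$, which would reintroduce a logarithm, I would write $\Gydoh=\Gydo-(\Gydo-\Gydoh)$ and use \eqref{eq:weakformGd0} to get $\A(\IU(u)-u,\Gydo)=(\IU(u)-u,\dy)$. Since $\|\dy\|_{\Lo}\leq C$ (Proposition~\ref{prp:demlow-delta} with $m=0$, $p=1$) and $\|u-\IU(u)\|_{\Li}\leq Ch^2\|u\|_{\Wti}$, this piece is $O(h^2\|u\|_{\Wti})$. For the remaining piece, continuity of $\A(\cdot,\cdot)$, the Galerkin error bound $\|\grads(\Gydo-\Gydoh)\|_{\Lt}\leq Ch\|\Gydo\|_{\Ht}$, and the regularity estimate $\|\Gydo\|_{\Ht}\leq C\|\dy\|_{\Lt}\leq Ch^{-1}$ (Proposition~\ref{prp:demlow-delta} with $m=0$, $p=2$) give $O(h\|u\|_{\Ht})$. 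Thus $I_3\leq Ch^2\|u\|_{\Wti}+Ch\|u\|_{\Ht}$ without any logarithm.

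The crux is $I_4$. Inserting the total fluxes as in Theorem~\ref{thm:max-NOPT}, I would write $I_4\leq I_{4,1}+I_{4,2}+I_{4,3}$, where $I_{4,1}=|(f,\Gydoh-\I(\Gydoh))|$ (using $\A(u,\Gydoh)=(f,\Gydoh)$ and $\Ahd(\uh,\I(\Gydoh))=(f,\I(\Gydoh))$), and $I_{4,2},I_{4,3}$ compare $\Ahd,\Ahs$ and $\A$. For the latter two I would use Lemmas~\ref{lem:AhsAhd} and \ref{lem:AAhs} with the pairing $p=q=2$, the stability bound $\|\grads\uh\|_{\Lt}\leq C\|f\|_{\Lt}$, and \eqref{eq:G3}, obtaining $O(h^2|\ln h|^{1/2}\|f\|_{\Lt})$, which is dominated by $Ch\|f\|_{\Ho}$ for $h\leq h_0$; this pairing avoids the $f\in\Li$ hypothesis and any appeal to Theorem~\ref{thm:W1-NOPT}. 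The genuinely new estimate is $I_{4,1}$: because $\Gydoh\in\U$ is only piecewise affine, Lemma~\ref{lem:fh2SCVT} cannot be applied to it directly, so I would introduce $\Gydo$ via the identity
\[
\Gydoh-\I(\Gydoh)=[\vh-\I(\vh)]+[\IU(\Gydo)-\IV(\Gydo)],\qquad \vh:=\Gydoh-\IU(\Gydo)\in\U,
\]
which relies on $\I(\IU(\Gydo))=\IV(\Gydo)$. Lemma~\ref{lem:mainfNOPT} with $p=q=2$ bounds the first bracket by $Ch\|f\|_{\Lt}\|\grads\vh\|_{\Lt}$, and $\|\grads\vh\|_{\Lt}\leq\|\grads(\Gydo-\Gydoh)\|_{\Lt}+\|\grads(\Gydo-\IU(\Gydo))\|_{\Lt}\leq C$ by the bounds already used for $I_3$; Lemma~\ref{lem:fh2SCVT} with $w=\Gydo$ bounds the second bracket by $Ch^2\|f\|_{\Ho}\|\Gydo\|_{\Ht}\leq Ch\|f\|_{\Ho}$. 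Hence $I_{4,1}\leq Ch\|f\|_{\Ho}$.

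Collecting $I_1,\dots,I_4$ and taking the supremum over $\y\in\S$ then yields the stated bound $\|\eh\|_{\Li}\leq Ch^2\|u\|_{\Wti}+Ch(\|u\|_{\Ht}+\|f\|_{\Ho})$. I expect the main obstacle to be $I_{4,1}$: one must check that the superconvergence gain of one power of $h$ from Lemma~\ref{lem:fh2SCVT} exactly compensates the singular growth $\|\Gydo\|_{\Ht}\leq Ch^{-1}$ of the $h$-dependent Green's function, so that a clean $O(h)$ term remains, and that the auxiliary function $\vh=\Gydoh-\IU(\Gydo)$ has uniformly bounded gradient. A secondary point is verifying that $\Gydo\in\HtHq$ together with $f\in\HqS$ meets the hypotheses of Lemma~\ref{lem:fh2SCVT}, and that the $p=q=2$ choice in $I_{4,2},I_{4,3}$ keeps every constant independent of $h$ under the weaker regularity $f\in\HqS$ assumed here.
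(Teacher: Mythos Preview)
Your argument is correct, but the paper proves this theorem in three lines by a completely different and far more elementary route. Instead of revisiting the Green's function duality machinery of Theorem~\ref{thm:max-NOPT}, the paper simply writes
\[
\|u-\uh\|_{\Li}\leq \|u-\IU(u)\|_{\Li}+\|\IU(u)-\uh\|_{\Li},
\]
bounds the first term by $Ch^{2}\|u\|_{\Wti}$ via Proposition~\ref{prp:interp}, applies the inverse estimate of Lemma~\ref{lem:propinverse} to the second term to get $Ch^{-1}\|\IU(u)-\uh\|_{\Lt}$, and then invokes the already established $L^{2}$ bound of Theorem~\ref{thm:L2SCVT} (together with one more use of Proposition~\ref{prp:interp}) to obtain $Ch^{-1}\cdot Ch^{2}(\|u\|_{\Ht}+\|f\|_{\Ho})$. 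No Green's functions, no splitting of $\Gydoh$, no delicate balance between the $h^{2}$ gain of Lemma~\ref{lem:fh2SCVT} and the $h^{-1}$ blow-up of $\|\Gydo\|_{\Ht}$ are needed.

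Your approach has the merit of being self-contained in the sense that it does not rest on Theorem~\ref{thm:L2SCVT}; it shows directly, at the pointwise level, how the SCVT structure (through Lemma~\ref{lem:fh2SCVT}) kills the logarithm that appears in Theorem~\ref{thm:max-NOPT}. The decomposition $\Gydoh-\I(\Gydoh)=[\vh-\I(\vh)]+[\IU(\Gydo)-\IV(\Gydo)]$ with $\vh=\Gydoh-\IU(\Gydo)$ is a nice device, and the bounds $\|\grads(\Gydo-\Gydoh)\|_{\Lt}\leq C$ and $\|\Gydo\|_{\Ht}\leq Ch^{-1}$ do combine exactly as you anticipate. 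But the paper's proof buys the same conclusion at almost no cost once the $L^{2}$ theory is in place, so from the standpoint of exposition it is the natural choice here.
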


\begin{proof}
From triangular inequality, Proposition \ref{prp:interp}, Lemma \ref{lem:propinverse} and Theorem \ref{thm:L2SCVT}, we then obtain
\begin{align*}
\|u-\uh\|_{\Li}&\leq \|u-\IU(u)\|_{\Li}+\|\IU(u)-\uh\|_{\Li}\\
&\leq Ch^{2}\|u\|_{\Wti}+Ch^{-1}\|\IU(u)-\uh\|_{\Lt}\\
&\leq Ch^{2}\|u\|_{\Wti}+Ch\left(\|u\|_{\Ht}+\|f\|_{\Ho}\right).
\end{align*}
For $h$ small enough, the proof is complete.
\end{proof}

\section{Numerical example and final remarks}
\label{sec:numericalexps}

This section illustrates an example of the FV approach of the Laplace-Beltrami operator using the recursive Voronoï-Delaunay decomposition. We consider three types of grids: the non-optimized grid (NOPT) and two of the most used grid optimizations in the literature \cite{miura2005comparison},  the Heikes and Randall optimized grids (HR$95$), proposed in  \cite{heikes1995anumerical,heikes1995bnumerical}, and the \emph{Spherical Centroidal Voronoï~Tessellations} (SCVT) described by Du and collaborators in \cite{du2003constrained,du2003voronoi}. We consider the SCVT grids with constant density function ($\rho=1$). To verify the error estimate $\varepsilon_{h}$, we shall use the example defined in  \cite{heikes1995bnumerical}. The exact solution $u$ in geographic coordinates $(\phi,\theta)$ is defined as
\begin{equation}
\label{eq:ex1solu}
u(\phi, \theta)=\cos\theta\cos^{4}\phi,
\end{equation}
and the forcing source as,
\begin{equation}
\label{eq:ex1solf}
\begin{split}
f(\phi,\theta)= -\cos\theta\cos^{2}\theta[\cos^{2}\phi-4\sin\phi\cos\phi\sin\phi\cos\phi\\
-12\cos^{2}\phi+16\cos^{2}\phi\cos^{2}\phi]/\cos^{2}\phi,
\end{split}
\end{equation}
where $\phi\in[-\pi/2,\pi/2]$ is the latitude and $\theta\in[-\pi,\pi]$ is the longitude.

\begin{figure}[!h]
\centering
\subfloat[NOPT]{
\centering
\includegraphics[scale=0.38]{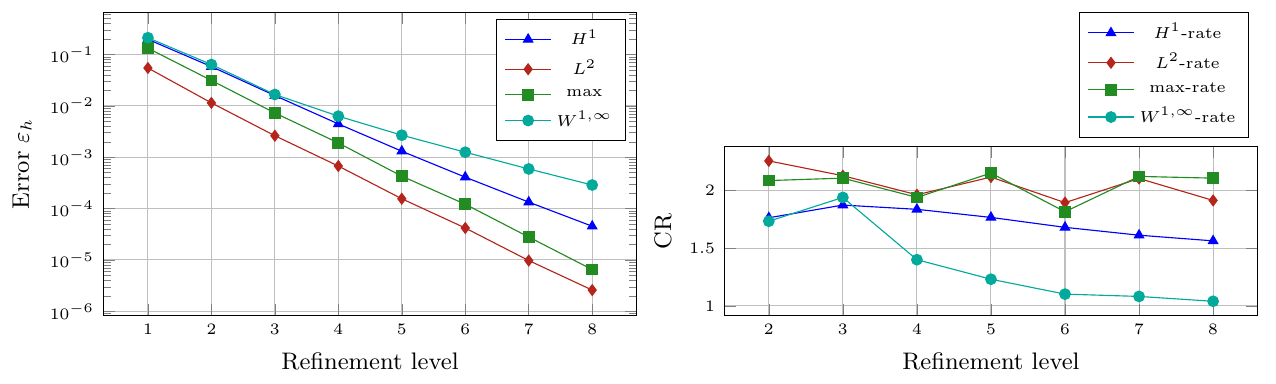}
}
\\
\subfloat[SCVT]{
\centering
\includegraphics[scale=0.38]{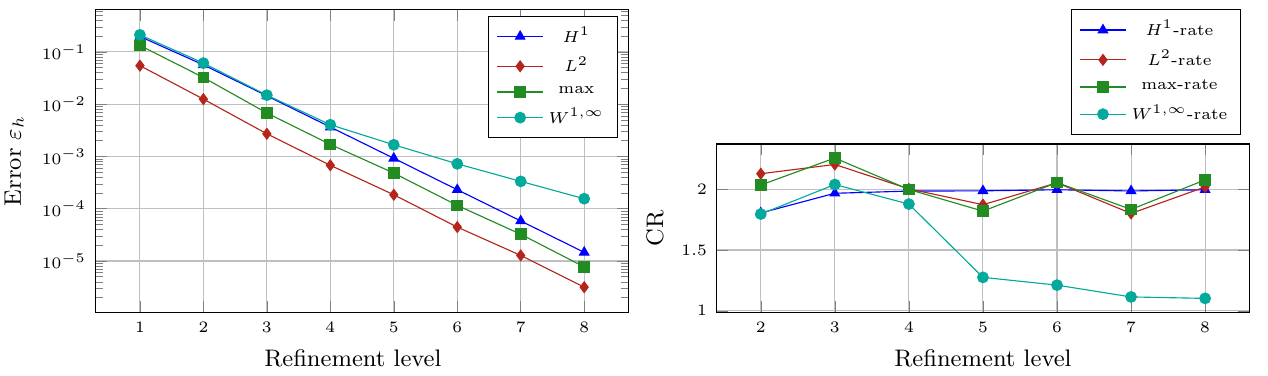}
}
\\
\subfloat[HR$95$]{
\centering
\includegraphics[scale=0.38]{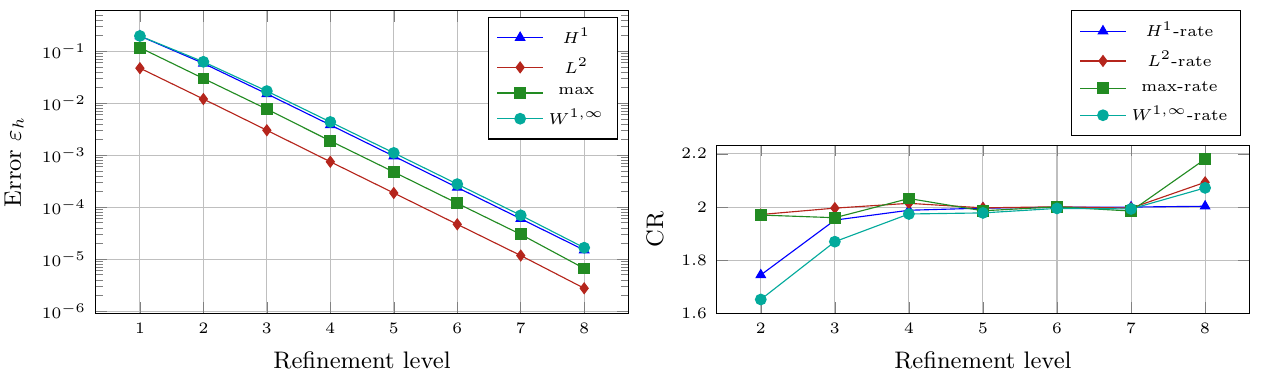}
}
\caption{\label{fig:solnorm} Errors $\eh$ and numerical convergence rates $CR$ in the finite volume approximation for problem \eqref{eq:ex1solf} with exact solution \eqref{eq:ex1solu} in $H^{1}$, $L^{2}$, $\max$ and $W^{1,\infty}$-norm using grids NOPT, SCVT and HR$95$ with different refinement levels.}
\end{figure}

Approximate solutions were obtained using the finite volume scheme \eqref{eq:FVscheme}. Figure \ref{fig:solnorm} shows error estimates and convergence rates of the approximate solution of the problem \eqref{eq:ex1solf} in $H^{1}$, $L^{2}$, $W^{1,\infty}$ and $\max$-norm using three types of grids and different refinement levels. The numerical convergence rate $CR$ with respect to the norm $\|\cdot\|_{\star}$ is given as
\[
CR=\frac{\left|\ln\|\varepsilon_{n}\|_{\star}-\ln\|\varepsilon_{n-1}\|_{\star}\right|}{\ln 2},\quad\mbox{for }n=2,\dots,N,
\] 
where $\varepsilon_{n}=u-\uh$ denotes the error of the $n$-th level. 

Firstly, by using grid NOPT, we observe that the numerical convergence rate is just about $\Or(h)$ in $H^{1}$-norm and matches with the theoretical convergence rate predicted in Theorem \ref{thm:H1-NOPT}. Analogously, from Theorem \ref{thm:W1-NOPT}, we have that the theoretical prediction for the solution error is $\Or(h|\ln h|)$ in $W^{1,\infty}$-norm. At the same time, the obtained logarithmic factor is not detected numerically. 

Furthermore, the numerical convergence rates for problem \eqref{eq:ex1solf} indicate that the error given in $L^{2}$ and $\max$-norm tends to be quadratic order as we refine the grid NOPT. In general, the difference between the right-hand sides of the variational problems \eqref{eq:weakform} and \eqref{eq:weakformv}, when using non-optimized Voronoï-Delaunay decomposition, is a dominant factor for optimal convergence rate (just about $\Or(h)$), as predicted in Theorems \ref{thm:H1-NOPT} and \ref{thm:max-NOPT} respectively. To our knowledge, no existing analytical results confirm the quadratic order in these norms for general Voronoï-Delaunay tessellations. 

However, observe that the numerical convergence rate for SCVT is just about $\Or(h^{2})$ in the $L^{2}$-norm, as had been shown by Du in \cite{du2005finite}.
Here, we modify the proof by using a minimal regularity requirement in the exact solution and highlight that the quadratic order error estimates depend on the geometric criterion of the SCVT. The numerical convergence rates in $\max$-norm is also $\Or(h^{2})$. This latter case is under study, and results will be presented elsewhere. Note also that the best behavior of error estimates is given in $H^{1}$-norm in both optimized grids. Thus, there exists a degree of superconvergence of the approach on gradients. To date, there seem to be no existing theoretical criteria that prove these behaviors and consequently, this brings a good challenge for future research. Extensions of the analysis to HR$95$ grids are our current investigation.

\section*{Declaration of competing interest}

Each author has contributed substantially to conducting the underlying research and writing this manuscript. Furthermore, they have no financial or other conflicts of interest to disclose.

\section*{Acknowledgements}
This work is in memory of Saulo R. M. Barros, who participated in this work but tragically passed away in July 2021, prior to its conclusion. The work presented here was supported by FAPESP (Fundação de Amparo à Pesquisa do Estado de São Paulo) through grant 2016/18445-7 and 2021/06176-0 and also by Coordenação de Aperfeiçoamento de Pessoal de Nível Superior - Brasil (CAPES), Finance Code 001.

\bibliographystyle{plain}
\bibliography{references}
\end{document}